\documentclass[11pt]{amsart}
\usepackage{amsmath}
\usepackage{amssymb,amsthm}
\usepackage{latexsym}
\usepackage{amsfonts}
\usepackage{xcolor}
\usepackage[plainpages=false, hypertexnames=false, pdfpagelabels=true, hyperindex=true, linktocpage, pagebackref=true, pdfa=true]{hyperref}
\usepackage{amsrefs}
\usepackage{mathrsfs}
\usepackage{multirow}

\begin{document}
\newcommand{\ci}[1]{_{ {}_{\scriptstyle #1}}}
\newcommand{\norm}[1]{\ensuremath{\|#1\|}}
\newcommand{\abs}[1]{\ensuremath{\vert#1\vert}}
\newcommand{\p}{\ensuremath{\partial}}
\newcommand{\pbar}{\ensuremath{\bar{\partial}}}
\newcommand{\db}{\overline\partial}
\newcommand{\D}{\mathbb{D}}
\newcommand{\DD}{\mathbb{D}}
\newcommand{\T}{\mathbb{T}}
\newcommand{\C}{\mathbb{C}}
\newcommand{\CC}{\mathbb{C}}

\newcommand{\N}{\mathbb{N}}
\newcommand{\td}{\widetilde\Delta}
\newcommand{\La}{\langle }
\newcommand{\Ra}{\rangle }
\newcommand{\tr}{\operatorname{tr}}
\newcommand{\ran}{\operatorname{Ran}}
\newcommand{\vf}{\varphi}
\newcommand{\e}{\varepsilon}
\newcommand{\f}[2]{\ensuremath{\frac{#1}{#2}}}
\newcommand{\be}{\mathbf{e}}
\newcommand{\clos}{\operatorname{clos}}
\newcommand{\rank}{\operatorname{rank}}
\newcommand{\bz}{\mathbf{z}}
\newcommand{\tto}{\!\!\to\!}
\newcommand{\wt}{\widetilde}
\newcommand{\shto}{\raisebox{.3ex}{$\scriptscriptstyle\rightarrow$}\!}
\newcommand{\bL}{\mathbf{L}}
\newcommand{\entrylabel}[1]{\mbox{#1}\hfill}




\numberwithin{equation}{section}
\newtheorem{thm}{Theorem}[section]
\newtheorem{lemma}[thm]{Lemma}
\newtheorem{cor}[thm]{Corollary}
\newtheorem{definition}[thm]{Definition}

\newtheorem{prop}[thm]{Proposition}
\newtheorem{rem}[thm]{Remark}
\newtheorem*{rem*}{Remark}

\title[The Matrix Corona Problem and Slice Holomorphic Functions]{The Corona Problem for Slice Holomorphic Functions via the Matrix Corona Problem}

\author[F. Colombo]{Fabrizio Colombo}
\address{(FC)
Politecnico di Milano\\Dipartimento di Matematica\\Via E. Bonardi 9\\20133
Milano, Italy}
\email{fabrizio.colombo@polimi.it}

\author[E. Pozzi]{Elodie Pozzi}
\address{(EP) Department of Mathematics and Statistics \\
Saint Louis University \\
220 N. Grand Blvd, 63103 St Louis MO, USA}
\email{elodie.pozzi@slu.edu}

\author[I. Sabadini]{Irene Sabadini}
\address{(IS)
Politecnico di Milano\\Dipartimento di Matematica\\Via E. Bonardi 9\\20133
Milano, Italy
}
\email{irene.sabadini@polimi.it}

\author[B. D Wick]{Brett D. Wick}
\address{(BDW)  Department of Mathematics, Washington University -
 St. Louis, One Brookings Drive\\ St. Louis, MO USA 63130-4899}
\email{wick@math.wustl.edu}
\thanks{BDW's research supported in part by National Science Foundation DMS awards \#2349868 and \#2054863 and Australian Research Council -- DP 220100285.}

\thanks{EP and BDW would like to thank Politecnico di Milano for hospitality during visits in Spring 2024 when this project was started.}

\begin{abstract}
In this paper we prove the Corona theorem for slice hyperholomorphic functions in a general way that applies to functions with values in a Clifford algebra $\mathbb R_n$ for any $n\geq 2$. This general proof is based on a matrix version of Corona's theorem and then exploiting some symmetries of the matrices that allow to reconstruct the values of the functions. The crucial idea is to tie together the algebra of the functions, the Bezout identities and the Carleson-type conditions. The results we obtain rest on a new way of writing the slice product which improves the standard approach and which holds in general dimension.
\end{abstract}

\maketitle

\section{Introduction}

Carleson's celebrated Corona Theorem \cite{MR0141789} states that for $f_1,\ldots,f_N\in H^\infty(\D)$ with
\[
0<\delta^2\le \sum_{j=1}^N |f_j(z)|^2\le 1,\qquad z\in\D,
\]
there exist $h_1,\ldots,h_N\in H^\infty(\D)$ satisfying the Bezout identity
\[
\sum_{j=1}^N f_j(z)h_j(z)=1,\qquad z\in\D,
\]
with norm estimates $\max_j \norm{h_j}_{H^\infty(\D)}\lesssim C(\delta,N)$.  Beyond its intrinsic appeal, the result is a cornerstone of function theory, operator theory, and harmonic analysis, and it has motivated a substantial body of work on variants and generalizations; see the survey \cite{MR3329539} and the monographs \cites{MR1419088,MR2261424,MR1669574,MR0827223} for accounts.

In several complex variables the picture is markedly different: Sibony \cite{MR0916722} produced domains in $\C^n$ on which the $H^\infty$-Corona problem fails, while the question remains famously open on the unit ball and the polydisc.  A different multidimensional framework arises from the Fueter--Sce--Qian construction \cite{MR4240465}, which gives rise to hyperholomorphic functions, which are also called slice regular when they are functions of a quaternionic variable (and quaternionic valued) \cites{MR3013643,MR2752913,MR3585395} and are called called slice monogenic when they are functions of a paravector variable and taking values in a Clifford algebra \cites{MR2089988,MR3496962,MR1169463}.  Both theories support rich function-theoretic and spectral-theoretic developments \cites{MR3967697,MR3887616,MR3930595,MR2069293}, and their interaction has been further developed through the fine structures on the $S$-spectrum \cites{MR4502763,MR4576314,MR4674273,MR4741057}.

A characteristic difficulty in the hyperholomorphic setting is that pointwise multiplication does not preserve hyperholomorphicity.  One therefore replaces the pointwise product with the \emph{slice product} (or $\ast$-product) introduced in \cites{MR2684426,MR2520116}, which turns the relevant spaces into associative (but, in general, non-commutative) algebras of hyperholomorphic functions.  In light of this, the natural Corona-type problem reads: given slice hyperholomorphic functions $f_1,\ldots,f_N$ on the unit ball $\mathbb B$ of $\mathbb H$ (or, more generally, slice monogenic functions valued in $\mathbb{R}_n$) with
\[
0<\delta^2\le \sum_{j=1}^N |f_j(q)|^2\le 1,
\]
find slice hyperholomorphic $h_1,\ldots,h_N$ satisfying
\[
\sum_{j=1}^N (f_j\ast h_j)(q)=1.
\]
An early, qualitative result in this direction was obtained by Gentili--Sarfatti--Struppa in \cite{MR3621101} via sheaf-theoretic methods, where ideals of slice regular functions were studied but quantitative bounds were not produced.  A related result under a different non-vanishing hypothesis appears in \cite{Shelah_MSThesis}, and the question was explicitly raised in the survey \cite{Saracco}.  The quaternionic Corona theorem was recently proved in quantitative form by the present authors in \cite{CPSW}, where the case of $\mathbb{R}_n$-valued slice monogenic functions was raised as an open question.

The proof in \cite{CPSW} proceeds by adapting Wolff's approach to Carleson's theorem (see \cites{MR2261424,MR1419088,MR1669574,MR0827223}) to the hyperholomorphic setting.  Writing each $f_j=F_j+G_jJ$ on a fixed complex slice $\mathbb{C}_I\subset \mathbb H$ for an imaginary unit $J\perp I$, where $F_j,G_j\in H^\infty(\D)$, the slice Bezout equation decomposes into a coupled pair of Bezout-type equations on the disk,
\begin{align*}
\sum_{j=1}^{N}\bigl(F_jH_j-G_j\hat K_j\bigr)&=1,\\
\sum_{j=1}^{N}\bigl(F_jK_j+G_j\hat H_j\bigr)&=0,
\end{align*}
where $\hat F(z):=\overline{F(\bar z)}$ is the natural involution on $H^\infty(\D)$.  The Wolff-type proof then constructs smooth solutions, perturbs them to be holomorphic via a system of $\bar\partial$-equations, and yields bounded holomorphic solutions.  A central technical point in \cite{CPSW} is that the simultaneous solvability of the two displayed equations forces a \emph{second}, less obvious Carleson-type condition: in the case $N=1$,
\begin{equation}\label{eq:intro-aux}
(F\hat F+G\hat G)(H\hat H+K\hat K)=1,
\end{equation}
and for general $N$ a more complicated quadratic identity (recalled in Section~\ref{s:simpleproof}).  In \cite{CPSW} these identities are derived by direct algebraic manipulation, and the corresponding Carleson conditions are imposed by hand to ensure that the relevant block matrices are bounded below.

\medskip

The present paper has three aims, all centered on a single observation that ties together the algebra of slice (and slice monogenic) functions, the auxiliary Bezout identities, and the Carleson-type conditions.

\medskip
\noindent\textbf{(1) The slice product is a matrix product.}  The restriction of a slice monogenic function $f_I$ on a fixed slice $\mathbb{C}_I$ corresponds to a $2^{n-1}\times 2^{n-1}$ matrix $M_{f_I}(z)$ with entries in $H^\infty(\D)$, built from the components of $f_I$ in the Splitting Lemma decomposition together with the involution $\hat\cdot$.  In the quaternionic case ($n=2$) this is the familiar Cayley-type representation
\[
M_f(z)=\begin{pmatrix}F(z)&-G(z)\\ \hat G(z)&\hat F(z)\end{pmatrix},\qquad z\in\mathbb{B}\cap \mathbb{C}_I\cong\mathbb{D},
\]
and in general we give an explicit closed-form description of $M_f$ in terms of symmetric differences and Clifford signs that arise from the $\ast$-product (Section~\ref{subsec:matrix-clifford-aux}).  The key structural property is the homomorphism
\[
M_{f\ast g}(z)=M_f(z)\,M_g(z),
\]
so that the multi-generator Bezout identity $\sum_j f_j\ast h_j=1$ is equivalent to the matrix identity
\[
\sum_{j=1}^N M_{f_j}(z)\,M_{h_j}(z)=\mathrm{Id}_{2^{n-1}}.
\]
We give a self-contained derivation of $M_f$ and of the general explicit formula for $f_I\ast h_I$ as a sum over symmetric differences, in the process simplifying the description in \cite{MR2684426}.

\medskip
\noindent\textbf{(2) A simple matrix-Corona proof of the slice hyperholomorphic and slice monogenic Corona theorems.}  Once the slice Bezout equation has been reformulated as a matrix identity, the natural strategy is to factor the matrix sum as a product of a wide block matrix $\mathsf{F}=(M_{f_1}\cdots M_{f_N})$ and a tall block matrix and to invoke the classical \emph{matrix Corona theorem} for $H^\infty(\D)$ \cites{MR0383098,MR0990193,MR0629839,MR0595742,MR2362422,MR2105955,MR2106344,MR2158178,MR2213732,MR2317961,MR1887635}.  Doing so produces some bounded matrix right inverse $P$ of $\mathsf{F}$, with quantitative norm bounds inherited from the matrix Corona theorem.  This right inverse will in general not have the structured block form required to recover a slice solution: its entries are arbitrary $H^\infty$ functions, not constrained to be conjugate-pairs in the way that the $M_{h_j}$ are.  The key observation is that the structured block form is precisely the joint fixed subspace of a family of explicit commuting involutions $\mathcal T_2,\ldots,\mathcal T_n$ on the space of right inverses, induced by left multiplication by the Clifford units $I_2,\ldots,I_n$ together with the action of $\hat\cdot$.  Averaging $\mathsf{P}$ over the abelian group $\langle \mathcal T_2,\ldots,\mathcal T_n\rangle$ produces a new bounded right inverse $\mathsf{H}$ that automatically has the block structure of a slice solution, and the components of $\mathsf{H}$ are the desired $h_j$'s.  In the quaternionic case, this projection is the single-involution symmetrization $\tfrac{1}{2}(\mathsf{P}+J_{2N}^{-1}\hat{\mathsf{P}} J_2)$, and in general it is the $2^{n-1}$-term average
\[
\mathsf{H}=\frac{1}{2^{n-1}}\sum_{S\subseteq\{2,\ldots,n\}}\mathcal T_S(\mathsf{P}).
\]
This yields a markedly simpler proof of the quaternionic Corona theorem of \cite{CPSW} (Section~\ref{s:simpleproof}), and it carries over with no essential change to the slice monogenic setting over $\mathbb{R}_n$ for arbitrary $n\ge 2$, providing a positive answer to the question raised in \cite{CPSW}.  Since the averaging procedure does not change norms, the quantitative bounds from the matrix Corona theorem transfer directly to the slice solutions.  Our main result is the following.

\begin{thm}[Slice monogenic Corona theorem]\label{thm:main-intro}
Let $n\ge 2$, $N\in\mathbb{N}\cup\{\infty\}$, and $0<\delta<1$.  Let $\mathbb B\subseteq\mathbb{R}^{n+1}$ be the unit ball and let $f_{1},\ldots,f_{N}\in H^\infty(\mathbb B)$ be slice monogenic functions valued in $\mathbb{R}_n$.  Suppose the data satisfy the Carleson-type condition
\[
0<\delta^2\le \sum_{j=1}^N |f_{j}(q)|^2\le 1\quad\text{for all }q\in \mathbb B,
\]
together with the second, quadratic Carleson condition arising from $\det(\mathsf{F}\,\mathsf{F}^\ast)\ge\delta^4\,\mathrm{Id}$ for the block matrix $\mathsf{F}=(M_{f_1}\cdots M_{f_N})$ (made explicit in Theorem~\ref{thm:master-formula}).  Then there exist slice monogenic functions $h_{1},\ldots,h_{N}\in H^\infty(\mathbb B)$ satisfying the Bezout identity
\[
\sum_{j=1}^{N} (f_{j}\ast h_{j})(q)=1\quad\text{for all }q\in \mathbb B,
\]
with norm estimates
\[
\max_{1\le j\le N}\norm{h_{j}}_{H^\infty(\mathbb B)}\lesssim C(\delta,N,n),
\]
where $C(\delta,N,n)$ is the constant in the matrix-Corona estimate for the $2^{n-1}\times N\cdot 2^{n-1}$ block matrix $\mathsf{F}$.  In particular, $C(\delta,N,n)$ depends polynomially on $1/\delta$ with exponent and coefficients depending on $n$ and $N$; for $n=2$ it specializes to the explicit constant $C(\delta,N)$ of \cite{CPSW}.
\end{thm}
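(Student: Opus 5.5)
The proof reduces everything to a single fixed slice $\mathbb C_I$ and then applies the classical matrix Corona theorem. First we fix an imaginary unit $I$ and write the restriction of each $f_j$ to $\mathbb B\cap\mathbb C_I\cong\D$ through the Splitting Lemma: $f_j$ is determined by $2^{n-1}$ functions in $H^\infty(\D)$. From these components and the involution $\hat F(z)=\overline{F(\bar z)}$ we form the $2^{n-1}\times 2^{n-1}$ matrix $M_{f_j}(z)$. Two facts about this matrix are needed, and both come from the closed-form description of the slice product as a sum over symmetric differences.
\begin{enumerate}
\item $f\mapsto M_f$ is an injective algebra homomorphism, so $M_{f\ast g}=M_fM_g$ and $M_1=\mathrm{Id}$.
\item The image of $f\mapsto M_f$ is exactly the set of $H^\infty$ matrices fixed by commuting involutions $\mathcal T_2,\ldots,\mathcal T_n$. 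Here $\mathcal T_k(A)=\Sigma_k^{-1}\hat A\,\Sigma_k$, where $\Sigma_k$ is a constant signed permutation matrix representing left multiplication by $I_k$.
\end{enumerate}
Fact (2) should be checked first, because the whole argument depends on it. It amounts to bookkeeping of Clifford signs: multiplying by $I_k$ toggles membership of $k$ in the index set $S$ and produces a sign $\pm1$. Applying $\hat{\cdot}$ reverses the roles of $z$ and $\bar z$ in exactly the way needed for the entries of $M_f$ to match.

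With these facts, the Bezout identity $\sum_j f_j\ast h_j=1$ is equivalent to
\[
\mathsf F\,\mathsf H=\mathrm{Id}_{2^{n-1}},\qquad \mathsf F=(M_{f_1}\cdots M_{f_N}),\quad \mathsf H=(M_{h_1};\ldots;M_{h_N}),
\]
where $\mathsf H$ is the column stack. The norm of $M_f(z)$ is comparable, up to constants depending only on $n$, to $|f|$ on the slice. The first Carleson hypothesis therefore gives $\|\mathsf F\|_\infty\lesssim_n 1$. The second hypothesis, $\mathsf F\mathsf F^\ast\ge\delta^4\,\mathrm{Id}$ (the quadratic condition of Theorem~\ref{thm:master-formula}), is exactly the hypothesis of the matrix Corona theorem \cites{MR0383098,MR2105955} for the $2^{n-1}\times N2^{n-1}$ matrix $\mathsf F$. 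That theorem gives a right inverse $\mathsf P\in H^\infty$ with $\mathsf F\mathsf P=\mathrm{Id}$ and $\|\mathsf P\|_\infty\le C(\delta,N,n)$. For $N=\infty$ we use the operator-valued (Hilbert-space) version, which has dimension-free constants in $N$.

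Next we symmetrize. Extend $\mathcal T_k$ to the tall block matrix $\mathsf P$ by $\mathcal T_k(\mathsf P)=\widetilde\Sigma_k^{-1}\hat{\mathsf P}\,\Sigma_k$, where $\widetilde\Sigma_k=\mathrm{diag}(\Sigma_k,\ldots,\Sigma_k)$. Because $\mathsf F$ has the structured form, $\hat{\mathsf F}=\Sigma_k\mathsf F\widetilde\Sigma_k^{-1}$. Applying $\hat{\cdot}$ to $\mathsf F\mathsf P=\mathrm{Id}$ (note $\hat{\mathrm{Id}}=\mathrm{Id}$) then yields
\[
\mathsf F\,\mathcal T_k(\mathsf P)=\Sigma_k^{-1}\hat{\mathsf F}\,\widetilde\Sigma_k\,\widetilde\Sigma_k^{-1}\hat{\mathsf P}\,\Sigma_k=\Sigma_k^{-1}\Sigma_k=\mathrm{Id},
\]
so each $\mathcal T_k$ maps right inverses to right inverses. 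The $\mathcal T_k$ are commuting involutions, and $\hat{\cdot}$ and conjugation by signed permutations preserve $H^\infty$ and norms. Hence
\[
\mathsf H=\frac{1}{2^{n-1}}\sum_{S\subseteq\{2,\ldots,n\}}\mathcal T_S(\mathsf P)
\]
is a right inverse with $\|\mathsf H\|_\infty\le\|\mathsf P\|_\infty$, and it is fixed by every $\mathcal T_k$. By fact (2), each block of $\mathsf H$ equals $M_{h_{j,I}}$ for a unique $\mathbb R_n$-valued holomorphic function $h_{j,I}$ on the slice.

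Finally we extend $h_{j,I}$ to a slice monogenic $h_j$ on $\mathbb B$ via the Representation Formula. Its sup norm on $\mathbb B$ is controlled by a constant (depending on $n$) times its sup on the slice, which gives $\|h_j\|_{H^\infty(\mathbb B)}\lesssim C(\delta,N,n)$. The identity $\sum_j f_j\ast h_j=1$ holds on $\mathbb C_I$ because $\mathsf F\mathsf H=\mathrm{Id}$ and the homomorphism is injective, and the Identity Principle extends it to all of $\mathbb B$. Polynomial dependence on $1/\delta$ is inherited from the quantitative matrix Corona bounds. For $n=2$ there is a single involution $\mathcal T_2(\mathsf P)=J_{2N}^{-1}\hat{\mathsf P}J_2$, and the construction reproduces the constant of \cite{CPSW}.

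The main obstacle is fact (2), in particular the two-sided intertwining $\hat{\mathsf F}=\Sigma_k\mathsf F\widetilde\Sigma_k^{-1}$ with the correct signs for all $k$ simultaneously, together with the check that the $\mathcal T_k$ commute. I would first verify both directly from the symmetric-difference formula for $M_f$, treating one index $k$ at a time.
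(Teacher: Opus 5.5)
Your architecture is the paper's: realize the $*$-product on one slice as $M_{f*g}=M_fM_g$, take a right inverse $\mathsf P$ of $\mathsf F=(M_{f_1}\cdots M_{f_N})$ from the matrix Corona theorem, average over the commuting involutions $\mathcal T_k$, read the $h_j$ off the blocks, and extend off the slice.

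\textbf{The gap.} The step you single out as the crux uses the wrong symmetry: $\Sigma_k$ cannot represent \emph{left} multiplication by $I_k$. Take $M_f$ to be the matrix of $h\mapsto f*h$ acting on coordinate vectors. This is the paper's $M_f$, and it is the one for which $\mathsf F\mathsf H=\mathrm{Id}$ encodes $\sum_j f_j*h_j=1$. With this $M_f$, left multiplication by $I_k$ is the constant matrix $M_{I_k}$, and $M_{I_k}M_fM_{I_k}^{-1}=M_{I_k*f*I_k^{-1}}$. This differs from $\widehat{M_f}$ as soon as $n\ge 3$.

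Concretely, take $n=3$, $N=1$, $f\equiv I_3$. All entries of $M_f$ are real constants, so $\widehat{M_f}=M_f$, whereas $M_{I_2}M_{I_3}M_{I_2}^{-1}=M_{I_2I_3I_2^{-1}}=-M_{I_3}$. For the right inverse $\mathsf P=M_{-I_3}$, your involution gives $\mathcal T_2(\mathsf P)=M_{I_3}$, hence $\mathsf F\,\mathcal T_2(\mathsf P)=M_{I_3}M_{I_3}=-\mathrm{Id}$. So the intertwining $\widehat{\mathsf F}=\Sigma_k\mathsf F\widetilde\Sigma_k^{-1}$ fails, $\mathcal T_k$ does not preserve right inverses, and its fixed set does not contain the $M_h$.

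Changing transposes or signs does not help, since $M_{I_k}^{-1}=-M_{I_k}=M_{I_k}^{\top}$. For $n=2$, left and right multiplication by $I_2$ on $\mathrm{span}\{1,I_2\}$ coincide, which is why the quaternionic case hides the problem.

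\textbf{The fix.} As in the paper, take $\Sigma_k$ to be \emph{right} multiplication by $I_k$, i.e.\ $\Sigma_k=(J^{(k)})^{-1}$ with $(J^{(k)})_{A,A\triangle\{k\}}=(-1)^{\sigma(A,\{k\})}$. In general, $\Sigma_k$ must act on the side opposite to the one $M_f$ represents. Left $*$-multiplication by $f$ commutes with right multiplication by constants. Since $I_k$ anticommutes with $I=I_1$, right multiplication by $I_k$ acts on coordinate vectors antilinearly, as $\mathbf h\mapsto\Sigma_k\widehat{\mathbf h}$. Commuting with this map is exactly $\widehat{M_f}=J^{(k)}M_f(J^{(k)})^{-1}$.

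\textbf{A proof of your fact (2).} The paper only asserts this; the right-multiplication viewpoint proves it.
Suppose $P$ is fixed by every $\mathcal T_k$. Then it commutes with right multiplication by $I_2,\dots,I_n$. Having $\mathbb C_I$-valued entries, it also commutes with right multiplication by $I_1$, which is multiplication of coordinates by $I$. Hence $P$ commutes with right multiplication by every constant in $\mathbb R_n$. Let $g$ be the function whose coordinate vector is $Pe_\emptyset$. Then $Pe_B$ is the coordinate vector of $g*I_B$, i.e.\ $P=M_g$ (blockwise when $N>1$).

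\textbf{A smaller point.} As stated, the second hypothesis bounds $\det(\mathsf F\mathsf F^*)$ from below, not $\mathsf F\mathsf F^*$ itself. You therefore need the estimate $\lambda_{\min}(\mathsf F\mathsf F^*)\ge\det(\mathsf F\mathsf F^*)/\|\mathsf F\|^{2(2^{n-1}-1)}$, together with $\|\mathsf F\|_\infty\lesssim_n 1$. This gives the uniform lower bound that the matrix Corona theorem requires.
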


\medskip
\noindent\textbf{(3) The auxiliary Bezout identities are determinant identities.}  The identity \eqref{eq:intro-aux} and its higher-$N$ analogues, derived in \cite{CPSW} by algebraic manipulation, become conceptually transparent in the matrix formulation.  Taking determinants of $M_fM_g=\mathrm{Id}$ gives, in the quaternionic $N=1$ case,
\[
\det(M_f)\det(M_g)=1,\qquad \det(M_f)={F}\hat{{F}}+G\hat{G},
\]
which is exactly \eqref{eq:intro-aux}.  For general $N$, the auxiliary identity arises from $\det\!\bigl(\sum_j M_{f_j}M_{h_j}\bigr)=1$; expanding this determinant via the Cauchy--Binet formula and a generalized Laplace expansion produces a sum of squared moduli that is precisely the Carleson-type quantity appearing in \cite{CPSW}.  This perspective makes manifest the geometric origin of the auxiliary condition: it is the Gram-matrix determinant of the block matrix $\mathsf{F}$, and it is automatically a sum of squares (hence nonnegative) by Cauchy--Binet, with Sylvester's criterion certifying that $\mathsf{F}\,\mathsf{F}^\ast$ is positive semidefinite (Corollary~\ref{cor:psd-sylvester}).  In the slice monogenic case the same computation produces a more intricate sum-of-squares identity; we develop this Master Formula in the Appendix (Theorem~\ref{thm:master-formula}).

\medskip

While this article was being finalized, we learned of the paper of Lin, Lu, and Zu~\cite{LinLuZu2026}, posted to the arXiv on August 9, 2026. That paper uses that the quaternionic slice product can be reformulated as a $2 \times 2$ matrix product, see for example \cite{MR3554256}, and combines this observation with the operator-valued Corona theorem of Treil--Wick~\cite{MR2158178} to give a new proof of the finite-generator quaternionic Corona theorem of~\cite{CPSW}; it further extends the result to countably many generators and establishes a Toeplitz corona theorem on the quaternionic Hardy space, thereby answering the two questions raised in~\cite{CPSW}*{Section~4}. The matrix-realization idea underlying our Section~2 was utilized independently, as part of the present project, which began in Spring 2024 and was directed principally at the Clifford-algebra generalization described below. The overlap between the two papers is confined to the quaternionic matrix-realization idea and application of the matrix Corona theorem:
 \cite{LinLuZu2026} treats countably many generators and the $H^2$/Toeplitz formulation in the quaternionic case, while the present paper extends the matrix-realization strategy to slice monogenic functions valued in $\mathbb{R}_n$ for arbitrary $n$, and identifies the associated auxiliary Carleson-type conditions as Cauchy--Binet determinant identities (Section~7).

\medskip

The paper is organized as follows.  Section~\ref{s:simpleproof} gives the matrix-Corona proof of the slice hyperholomorphic Corona theorem of \cite{CPSW} in the quaternionic case, first for $N=2$ and then in general, including the blockwise symmetry computation that organizes the symmetrization.  Section~\ref{sec:slice-monogenic-preliminaries} collects the preliminaries on slice monogenic functions following \cites{MR2684426,MR2520116}.  In Section~\ref{subsec:matrix-clifford-aux} we develop the closed-form expression for the slice product and derive the matrix representation $M_f$ in the general Clifford setting.  Section~\ref{subsec:Bezout-general} carries out the symmetrization argument for $\mathbb{R}_n$ for arbitrary $n\ge 2$ and establishes the slice monogenic Corona theorem (Theorem~\ref{thm:corona-general}).  Section~\ref{s:LinearAlgebra} (the Appendix) develops the determinant identities via Cauchy--Binet that make the auxiliary Bezout equations transparent, including the Master Formula and its consequences for principal minors and positive semidefiniteness.

\section{Alternative Proof via Matrix Corona Theorem}
\label{s:simpleproof}

In this section we give an alternative, simpler proof of the recent result of the authors in \cite{CPSW} of the Corona problem for slice hyperholomorphic functions and which can be used in higher dimensions.  The idea is to rephrase the system of equations as a matrix equation, then apply the matrix Corona Theorem to that problem.  This produces a solution, but in general it lacks the necessary invariances and symmetries needed in the original equations.  However, a simple symmetrization of this matrix solution produces functions that solve the original system.  We carry this out now, first in the case of $N=2$ since it explains the computations and ideas clearly, then we turn to the case of general $N$.

The general structure used throughout this section, and continued in the Clifford setting in later sections, is the following.  The slice product $f*g$ can be represented by a matrix product $M_f M_g$, where $M_f$ is an appropriately sized matrix built from the components of $f$ and the involution
\[
\hat F(z) := \overline{F(\bar z)}.
\]
Consequently the Bezout-type equation $\displaystyle\sum_{j=1}^{N} f_j * h_j = 1$ becomes the matrix identity
\[
\sum_{j=1}^N M_{f_j}(z) M_{h_j}(z) = \textnormal{Id}_2,
\]
which factors as the product of a wide block matrix on the left and a tall block matrix on the right:
\[
\bigl(M_{f_1}\ \cdots\ M_{f_N}\bigr) \begin{pmatrix} M_{h_1} \\ \vdots \\ M_{h_N} \end{pmatrix} = \textnormal{Id}_2.
\]
The matrix Corona Theorem applied to the wide block matrix produces a tall right inverse, which we then symmetrize to recover the desired block structure.

\subsection{Two Generators}

Suppose that we are given $F_1,F_2, G_1,G_2\in H^\infty(\mathbb{D})$ and $H_1,H_2, K_1,K_2\in H^\infty(\mathbb{D})$ so that
\begin{align}
    F_1H_1+F_2H_2-G_1\hat{K}_1-G_2\hat{K}_2 &=1\label{e:e1}\\
    F_1K_1+F_2K_2+G_1\hat{H}_1+G_2\hat{H}_2 &= 0\label{e:e2}.
\end{align}
For the given $F_1,F_2, G_1,G_2\in H^\infty(\mathbb{D})$ define matrices
$$
\mathsf{F}(z)=
\left(
\begin{array}{cccc}
F_1(z) & -G_1(z) & F_2(z) & -G_2(z) \\
\hat G_1(z) & \hat F_1(z) & \hat G_2(z) & \hat F_2(z)
\end{array}
\right)
$$
and
$$
\mathsf{H}(z)=
\left(
\begin{array}{cc}
H_1(z) & -K_1(z) \\
\hat K_1(z) & \hat H_1(z) \\
H_2(z) & -K_2(z) \\
\hat K_2(z) & \hat H_2(z)
\end{array}
\right).
$$
The matrix $\mathsf{F}(z)$ is the wide $2\times 4$ matrix formed by placing the $2\times 2$ slice matrices $M_{F_1}$ and $M_{F_2}$ side by side, and $\mathsf{H}(z)$ is the tall $4\times 2$ matrix formed by stacking $M_{H_1}$ and $M_{H_2}$ vertically.  Observe that by direct matrix multiplication and the use of \eqref{e:e1} and \eqref{e:e2} directly, taking the conjugate of the equation, or multiplying by $-1$, we have, omitting the variable $z$ in the computations,
\begin{align*}
\mathsf{F}\mathsf{H} & =\left(
\begin{array}{cccc}
F_1 & -G_1 & F_2 & -G_2 \\
\hat G_1 & \hat F_1 & \hat G_2 & \hat F_2
\end{array}
\right)\left(
\begin{array}{cc}
H_1 & -K_1 \\
\hat K_1 & \hat H_1 \\
H_2 & -K_2 \\
\hat K_2 & \hat H_2
\end{array}
\right)\\
&=\left(
\begin{array}{cc}
F_1 H_1 - G_1 \hat K_1 + F_2 H_2 - G_2 \hat K_2 & -F_1 K_1 - G_1 \hat H_1 - F_2 K_2 - G_2 \hat H_2 \\
\hat G_1 H_1 + \hat F_1 \hat K_1 + \hat G_2 H_2 + \hat F_2 \hat K_2 & -\hat G_1 K_1 + \hat F_1 \hat H_1 - \hat G_2 K_2 + \hat F_2 \hat H_2
\end{array}
\right)\\
& =\left(
\begin{array}{cc}
  1 & 0\\
  0  & 1
\end{array}
\right).
\end{align*}

Next, observe that
\begin{align*}
\mathsf{F}(z)\mathsf{F}^{\ast}(z) & =
\left(
\begin{array}{cccc}
F_1(z) & -G_1(z) & F_2(z) & -G_2(z) \\
\hat G_1(z) & \hat F_1(z) & \hat G_2(z) & \hat F_2(z)
\end{array}
\right)
\left(
\begin{array}{cc}
\overline{F_1(z)} & G_1(\bar z) \\
-\overline{G_1(z)} & F_1(\bar z) \\
\overline{F_2(z)} & G_2(\bar z) \\
-\overline{G_2(z)} & F_2(\bar z)
\end{array}
\right)\\
& = {
\left(\begin{array}{cc}
\vert F_1(z)\vert^2+\vert F_2(z)\vert^2 +\vert G_1(z)\vert^2+\vert G_2(z)\vert^2 & F_1 G_1(\bar z) - G_1 F_1(\bar z) + F_2 G_2(\bar z) - G_2 F_2(\bar z) \\
\overline{F_1 G_1(\bar z) - G_1 F_1(\bar z) + F_2 G_2(\bar z) - G_2 F_2(\bar z)} & \vert F_1(\bar z)\vert^2+\vert F_2(\bar z)\vert^2 +\vert G_1(\bar z)\vert^2+\vert G_2(\bar z)\vert^2
\end{array}
\right)}\\
& :=
\left(\begin{array}{cc}
a(z) & b(z)\\
\overline{b(z)} & d(z)
\end{array}
\right).
\end{align*}
By Sylvester's Criterion, $\mathsf{F}(z)\mathsf{F}^{\ast}(z)\geq 0$ if and only if the determinants of all leading principal minors are non-negative.  Since the diagonal terms are a Corona type condition, namely, the typical hypothesis, we will have that $0<\delta^2\leq\vert F_1(z)\vert^2+\vert F_2(z)\vert^2 +\vert G_1(z)\vert^2+\vert G_2(z)\vert^2\leq 1$, and so $0<\delta^2<a(z)\leq 1$ and similarly for $d(z)$.  It then suffices to check $\det \mathsf{F}(z)\mathsf{F}^{\ast}(z)=a(z)d(z)-\vert b(z)\vert^2:=\Delta(z)> 0$.  A direct application of the Cauchy--Binet formula shows that the determinant of $\mathsf{F}(z)\mathsf{F}^{\ast}(z)$ is given by:
\begin{align}
\label{e:NewDenominator}
\Delta(z) & :=\abs{\hat{F}_1(z)F_2(z)+G_1(z)\hat{G}_2(z)}^2+\abs{F_1(z)\hat{G}_2(z)-F_2(z)\hat{G}_1(z)}^2
\\
&
+
\abs{F_2(z)\hat{F}_2(z)+G_2(z)\hat{G}_2(z)}^2
+\abs{G_1(z)\hat{F_2}(z)-\hat{F}_1(z)G_2(z)}^2
\notag\\
&
+\abs{F_1(z)\hat{F}_2(z)+\hat{G}_1(z)G_2(z)}^2+\abs{F_1(z)\hat{F}_1(z)+G_1(z)\hat{G}_1(z)}^2.\notag
\end{align}
We defer the computation of this determinant to Theorem \ref{thm:detN-CB} in Appendix \ref{s:LinearAlgebra}, where it is derived via Cauchy--Binet.  In particular, if we suppose that the data we start with has the Corona condition, then $\mathsf{F}(z)\mathsf{F}^*(z)\geq \delta^2\textnormal{Id}_2>0$.

By the matrix Corona Theorem applied to $\mathsf{F}(z)$ there exists a matrix function $\mathsf{P}(z)$,
$$
\mathsf{P}(z)=
\left(
\begin{array}{cc}
 p_{1,1}(z) & q_{1,1}(z) \\
 p_{2,1}(z) & q_{2,1}(z) \\
 p_{1,2}(z) & q_{1,2}(z) \\
 p_{2,2}(z) & q_{2,2}(z)
\end{array}
\right)
$$
so that, dropping the variable $z$ for the computations now,
\begin{align*}
\mathsf{F}\mathsf{P} & =\left(
\begin{array}{cccc}
F_1 & -G_1 & F_2 & -G_2 \\
\hat G_1 & \hat F_1 & \hat G_2 & \hat F_2
\end{array}
\right)
\left(
\begin{array}{cc}
 p_{1,1} & q_{1,1} \\
 p_{2,1} & q_{2,1} \\
 p_{1,2} & q_{1,2} \\
 p_{2,2} & q_{2,2}
\end{array}
\right)\\
&=
\left(
\begin{array}{cc}
 F_1 p_{1,1} - G_1 p_{2,1} + F_2 p_{1,2} - G_2 p_{2,2} & F_1 q_{1,1} - G_1 q_{2,1} + F_2 q_{1,2} - G_2 q_{2,2} \\
 \hat G_1 p_{1,1} + \hat F_1 p_{2,1} + \hat G_2 p_{1,2} + \hat F_2 p_{2,2} & \hat G_1 q_{1,1} + \hat F_1 q_{2,1} + \hat G_2 q_{1,2} + \hat F_2 q_{2,2}
\end{array}
\right)\\
& =
\left(
\begin{array}{cc}
 1 & 0 \\
 0 & 1
\end{array}
\right).
\end{align*}
This gives rise to four equations:
\begin{align}
 F_1 p_{1,1} - G_1 p_{2,1} + F_2 p_{1,2} - G_2 p_{2,2}  & = 1\label{e:s1e1}\\
 F_1 q_{1,1} - G_1 q_{2,1} + F_2 q_{1,2} - G_2 q_{2,2} & = 0\label{e:s1e2}\\
 \hat G_1 p_{1,1} + \hat F_1 p_{2,1} + \hat G_2 p_{1,2} + \hat F_2 p_{2,2} & = 0\label{e:s1e3}\\
 \hat G_1 q_{1,1} + \hat F_1 q_{2,1} + \hat G_2 q_{1,2} + \hat F_2 q_{2,2}  & = 1\label{e:s1e4}
\end{align}
and upon taking $\hat{\cdot}$ of these equations we have four additional equations:
\begin{align}
 \hat F_1 \hat p_{1,1} - \hat G_1 \hat p_{2,1} + \hat F_2 \hat p_{1,2} - \hat G_2 \hat p_{2,2}  & = 1\label{e:s2e1}\\
 \hat F_1 \hat q_{1,1} - \hat G_1 \hat q_{2,1} + \hat F_2 \hat q_{1,2} - \hat G_2 \hat q_{2,2} & = 0\label{e:s2e2}\\
 G_1 \hat p_{1,1} + F_1 \hat p_{2,1} + G_2 \hat p_{1,2} + F_2 \hat p_{2,2} & = 0\label{e:s2e3}\\
 G_1 \hat q_{1,1} + F_1 \hat q_{2,1} + G_2 \hat q_{1,2} + F_2 \hat q_{2,2}  & = 1.\label{e:s2e4}
\end{align}

If we add \eqref{e:s1e1} and \eqref{e:s2e4} and divide by two, we obtain
\begin{align}
\label{e:s3e1} \frac{(p_{1,1}+\hat{q}_{2,1})}{2}F_1+\frac{(\hat q_{1,1} - p_{2,1})}{2}G_1+\frac{(p_{1,2}+\hat{q}_{2,2})}{2}F_2+\frac{(\hat q_{1,2}-p_{2,2})}{2}G_2 & = 1.
\end{align}
Similarly, subtracting \eqref{e:s1e2} from \eqref{e:s2e3} and dividing by two gives:
\begin{align}
\label{e:s3e3} F_1\frac{(\hat p_{2,1} - q_{1,1})}{2}+G_1\frac{(\hat p_{1,1}+q_{2,1})}{2}+F_2\frac{(\hat p_{2,2} - q_{1,2})}{2}+G_2\frac{(\hat p_{1,2}+q_{2,2})}{2} & = 0.
\end{align}
These identities suggest defining:
\begin{align*}
H_1 & := \frac{(p_{1,1}+\hat{q}_{2,1})}{2} & H_2  & :=\frac{(p_{1,2}+\hat{q}_{2,2})}{2}\\
\hat{K}_1 & := \frac{(p_{2,1}-\hat q_{1,1})}{2} & \hat{K}_2 & := \frac{(p_{2,2}-\hat q_{1,2})}{2}\\
K_1 & := \frac{(\hat p_{2,1} - q_{1,1})}{2} & K_2 & :=  \frac{(\hat p_{2,2} - q_{1,2})}{2}\\
\hat{H}_1 & := \frac{(\hat{p}_{1,1}+q_{2,1})}{2} & \hat{H}_2  & :=\frac{(\hat{p}_{1,2}+q_{2,2})}{2}
\end{align*}
where the identities for $\hat{h}_j$ and $\hat{k}_j$ follow simply by taking $\hat{\cdot}$ of the definitions for $H_j$ and $K_j$.  Indeed, note that
$$
\hat K_j = \widehat{\left(\frac{\hat p_{2,j} - q_{1,j}}{2}\right)} = \frac{p_{2,j} - \hat q_{1,j}}{2} = -\frac{\hat q_{1,j} - p_{2,j}}{2},
$$
which matches the formula defined above.  If the functions $p_{i,j}$ and $q_{i,j}$ are bounded, then $H_j, K_j$ are bounded as well with the same norm.

With these functions, \eqref{e:s3e1} and \eqref{e:s3e3} translate into:
\begin{align}
\label{e:s3e1new} F_1H_1+F_2H_2-G_1\hat{K}_1-G_2\hat{K}_2  &= 1\\
\label{e:s3e3new} F_1K_1+F_2K_2+G_1\hat{H}_1+G_2\hat{H}_2 & =0.
\end{align}
Taking $\hat{\cdot}$ of \eqref{e:s3e1new} gives the further identity
\begin{align}
\label{e:s3e2new}  \hat{F}_1\hat{H}_1+\hat{F}_2\hat{H}_2-\hat{G}_1K_1-\hat{G}_2 K_2 &  = 1.
\end{align}

It then holds that \eqref{e:s3e1new}, \eqref{e:s3e2new} and \eqref{e:s3e3new} give:
$$
\left(
\begin{array}{cc}
  F_1H_1+F_2H_2-G_1\hat{k}_1-G_2\hat{K}_2
&   -F_1 K_1 - G_1 \hat H_1 - F_2 K_2 - G_2 \hat H_2 \\
\hat G_1 H_1 + \hat F_1 \hat K_1 + \hat G_2 H_2 + \hat F_2 \hat K_2 & \hat{F}_1\hat{H}_1+\hat{F}_2\hat{H}_2-\hat{G}_1K_1-\hat{G}_2 K_2
\end{array}
\right)=
\left(
\begin{array}{cc}
1 & 0\\
0 & 1
\end{array}\right)
$$
which means there is a matrix with bounded holomorphic entries
$$
\mathsf{H}(z)=
\left(
\begin{array}{cc}
H_1(z) & -K_1(z) \\
\hat K_1(z) & \hat H_1(z) \\
H_2(z) & -K_2(z) \\
\hat K_2(z) & \hat H_2(z)
\end{array}
\right)
$$
so that $\mathsf{F}(z)\mathsf{H}(z)=\textnormal{Id}_2$.

\subsection{The General Case}

Let $N\in \mathbb{N}\cup\{\infty\}$.  Suppose that we are given $F_j,G_j\in H^\infty(\mathbb{D})$ and $H_j, K_j\in H^\infty(\mathbb{D})$ for $1\leq j\leq N$ so that
\begin{align}
    \sum_{j=1}^{N}F_jH_j-G_j\hat{K}_j &=1\label{e:e1n}\\
    \sum_{j=1}^{N} F_jK_j+G_j\hat{H}_j &= 0\label{e:e2n}.
\end{align}
For the given $F_j,G_j\in H^\infty(\mathbb{D})$ define
$$
\mathsf{F}(z)=
\left(
\begin{array}{cccccc}
F_1(z) & -G_1(z) & \cdots & F_N(z) & -G_N(z) \\
\hat G_1(z) & \hat F_1(z) & \cdots & \hat G_N(z) & \hat F_N(z)
\end{array}
\right)
$$
and
$$
\mathsf{H}(z)=
\left(
\begin{array}{cc}
H_1(z) & -K_1(z) \\
\hat K_1(z) & \hat H_1(z) \\
\vdots & \vdots \\
H_N(z) & -K_N(z) \\
\hat K_N(z) & \hat H_N(z)
\end{array}
\right).
$$
The matrix $\mathsf{F}(z)$ is a $2\times 2N$ matrix and $\mathsf{H}(z)$ is a $2N\times 2$ matrix.  The columns of $\mathsf{F}(z)$ are arranged in an \emph{interleaved} fashion: for each $1\leq j\leq N$, the columns $(F_j,\hat G_j)^\top$ and $(-G_j,\hat F_j)^\top$ associated to the generator $j$ appear consecutively (where $^\top$ denotes the transpose).  Correspondingly, the rows of $\mathsf{H}(z)$ are arranged so that the entries associated to the $j$-th generator appear in rows $2j-1, 2j$.  This interleaved arrangement will be convenient for the developments in later sections.

Observe that, dropping the variable $z$ for the computation
\begin{align*}
\mathsf{F}\mathsf{H} & =
\left(
\begin{array}{cc}
  \displaystyle\sum_{j=1}^{N}F_jH_j-G_j\hat{K}_j
& \displaystyle -\sum_{j=1}^{N}(F_jK_j+G_j\hat{H}_j) \\
\displaystyle\sum_{j=1}^{N}\hat{G}_jH_j+\hat{F}_j\hat{K}_j & \displaystyle\sum_{j=1}^{N}\hat{F}_j\hat{H}_j-\hat{G}_jK_j
\end{array}
\right) =\left(
\begin{array}{cc}
  1 & 0\\
  0  & 1
\end{array}
\right).
\end{align*}
Here the entries of the resulting $2\times 2$ matrix follow from either using \eqref{e:e1n} and \eqref{e:e2n} directly, taking the conjugate of the equation, or multiplying by $-1$.

The goal is to solve the equations \eqref{e:e1n} and \eqref{e:e2n} under a Corona condition on the data $F_j, G_j$.  We now argue how this can be done via the matrix Corona Theorem for $H^\infty(\mathbb{D})$ working from the matrix $\mathsf{F}(z)$ constructed above.

Observe that
\begin{align*}
\mathsf{F}(z)\mathsf{F}^{\ast}(z) & =
\left(\begin{array}{cc}
\displaystyle\sum_{j=1}^{N} \vert F_j(z)\vert^2+\vert G_j(z)\vert^2 & \displaystyle\sum_{j=1}^{N}F_j(z)G_j(\overline{z})-G_j(z)F_j(\overline{z})\\
\displaystyle\sum_{j=1}^{N}\overline{F_j(z)G_j(\overline{z})}-\overline{G_j(z)F_j(\overline{z})} & \displaystyle\sum_{j=1}^{N} \vert F_j(\overline{z})\vert^2 +\vert G_j(\overline{z})\vert^2
\end{array}
\right)\\
& :=
\left(\begin{array}{cc}
a(z) & b(z)\\
\overline{b(z)} & d(z)
\end{array}
\right).
\end{align*}

By Sylvester's Criterion, to show that $\mathsf{F}(z)\mathsf{F}^{\ast}(z)> 0$ it suffices to verify that the determinants of principal minors are non-negative.  Clearly the leading principal minor is non-negative by the usual Corona condition.  And by Theorem \ref{thm:detN-CB} we have that $\Delta(z)=\det \mathsf{F}(z)\mathsf{F}^{\ast}(z)$ is given by:
\begin{align*}
\Delta(z) &=\sum_{r=1}^{N}\sum_{j=1}^{N}
\left\vert F_j(z)\hat{F}_r(z)+G_r(z)\hat{G}_j(z)\right\vert^2
+\sum_{r=1}^{N}\sum_{j=r+1}^{N}\left\vert \hat{G}_r(z)F_j(z)-\hat{G}_j(z)F_r(z)\right\vert^2\\
& \quad +\sum_{r=1}^{N}\sum_{j=r+1}^{N}\left\vert \hat{F}_r(z)G_j(z)-\hat{F}_j(z)G_r(z)\right\vert^2.
\end{align*}
See Section \ref{s:LinearAlgebra} for the justification of these computations.  And so if we start with data that has the Corona condition, then $\mathsf{F}(z)\mathsf{F}^*(z)\geq \delta^2\textnormal{Id}_2>0$.

By the $H^\infty(\mathbb{D})$ matrix Corona Theorem applied to $\mathsf{F}(z)$ there exists a matrix function $\mathsf{P}(z)$, written in interleaved form to match the column-ordering of $\mathsf{F}(z)$,
$$
\mathsf{P}(z)=
\left(
\begin{array}{cc}
 p_{1,1}(z) & q_{1,1}(z) \\
 p_{2,1}(z) & q_{2,1}(z) \\
 \vdots & \vdots \\
 p_{1,N}(z) & q_{1,N}(z) \\
 p_{2,N}(z) & q_{2,N}(z)
\end{array}
\right)
$$
so that $\mathsf{F}(z)\mathsf{P}(z)=\textnormal{Id}_2$.  Here the rows of $\mathsf{P}(z)$ are indexed in such a way that, for each $1\le j\le N$, the entries $p_{i,j}$ pair with the $(F_j,\hat{g}_j)^\top$ column of $\mathsf{F}$ and the entries $q_{i,j}$ pair with the $(-G_j,\hat{f}_j)^\top$ column.

Direct multiplication gives
\begin{align*}
\mathsf{F}\mathsf{P} & =
\left(
\begin{array}{cc}
 \displaystyle\sum_{j=1}^{N}F_j p_{1,j}-G_j p_{2,j} & \displaystyle\sum_{j=1}^{N}F_j q_{1,j} - G_j q_{2,j} \\
\displaystyle \sum_{j=1}^{N}\hat G_j p_{1,j} + \hat F_j p_{2,j}  & \displaystyle\sum_{j=1}^{N}\hat G_j q_{1,j} + \hat F_j q_{2,j}
\end{array}
\right)=
\left(
\begin{array}{cc}
 1 & 0 \\
 0 & 1
\end{array}
\right).
\end{align*}
We want to use the functions $p_{i,j}$ and $q_{i,j}$ to build functions $H_j$ and $K_j$ that solve \eqref{e:e1n} and \eqref{e:e2n}.
The equation $\mathsf{F}(z)\mathsf{P}(z)=\textnormal{Id}_2$ gives rise to four equations:
\begin{align}
 \sum_{j=1}^{N} F_j p_{1,j} - G_j p_{2,j}   & = 1;\label{e:s1e1n}\\
 \sum_{j=1}^{N} F_j q_{1,j} - G_j q_{2,j} & = 0;\label{e:s1e2n}\\
 \sum_{j=1}^{N} \hat G_j p_{1,j} + \hat F_j p_{2,j} & = 0;\label{e:s1e3n}\\
 \sum_{j=1}^{N} \hat G_j q_{1,j} + \hat F_j q_{2,j}   & = 1;\label{e:s1e4n}
\end{align}
and upon taking $\hat{\cdot}$ of these equations we have four additional equations:
\begin{align}
 \sum_{j=1}^{N}\hat F_j \hat p_{1,j} - \hat G_j \hat p_{2,j}  & = 1;\label{e:s2e1n}\\
 \sum_{j=1}^{N}\hat F_j \hat q_{1,j} - \hat G_j \hat q_{2,j} & = 0;\label{e:s2e2n}\\
 \sum_{j=1}^{N} G_j \hat p_{1,j} + F_j \hat p_{2,j} & = 0;\label{e:s2e3n}\\
 \sum_{j=1}^{N} G_j \hat q_{1,j} + F_j \hat q_{2,j}  & = 1.\label{e:s2e4n}
\end{align}
If we add \eqref{e:s1e1n} and \eqref{e:s2e4n} and divide by two we obtain:
\begin{align}
\label{e:s3e1n}\sum_{j=1}^{N}\frac{(p_{1,j}+\hat{q}_{2,j})}{2}F_j+\frac{(\hat q_{1,j} - p_{2,j})}{2}G_j  & = 1.
\end{align}
Then subtracting the equation \eqref{e:s1e2n} from the equation \eqref{e:s2e3n}, collecting terms and dividing by two we see that:
\begin{align}
\label{e:s3e3n}  \sum_{j=1}^{N}F_j\frac{(\hat p_{2,j} - q_{1,j})}{2} + G_j\frac{(\hat{p}_{1,j}+q_{2,j})}{2} & =0.
\end{align}
These identities suggest defining the following functions:
\begin{align*}
H_j & := \frac{(p_{1,j}+\hat{q}_{2,j})}{2} & \hat{H}_j & = \frac{(\hat{p}_{1,j}+q_{2,j})}{2}\\
 K_j & := \frac{(\hat p_{2,j} - q_{1,j})}{2} & \hat{K}_j & = \frac{(p_{2,j} - \hat q_{1,j})}{2}
\end{align*}
with the resulting identities for $\hat{h}_j$ and $\hat{k}_j$ following simply by taking $\hat{\cdot}$ of the definition for $H_j$ and $K_j$.  Also, observe that if the functions $p_{i,j}$ and $q_{i,j}$ are bounded for $i=1,2$ and $1\leq j\leq N$, then it will be the case that $H_j, K_j$ are bounded as well, with the same norm.

With these definitions of $H_j$ and $K_j$, we interpret \eqref{e:s3e1n} and \eqref{e:s3e3n} as
\begin{align}
\label{e:s3e1nnew} \sum_{j=1}^{N}F_jH_j-G_j\hat{K}_j &= 1,\\
\label{e:s3e3nnew} \sum_{j=1}^{N} F_jK_j+G_j\hat{H}_j & =0.
\end{align}
Taking $\hat{\cdot}$ of \eqref{e:s3e1nnew} gives the further identity
\begin{align}
\label{e:s3e2nnew}  \sum_{j=1}^{N} \hat{F}_j\hat{H}_j-\hat{G}_jK_j &  = 1.
\end{align}

It then holds that \eqref{e:s3e1nnew}, \eqref{e:s3e3nnew} and \eqref{e:s3e2nnew} give:
$$
\left(
\begin{array}{cc}
  \displaystyle\sum_{j=1}^{N}F_jH_j-G_j\hat{K}_j & \displaystyle-\sum_{j=1}^{N}(F_jK_j+G_j\hat{H}_j) \\
\displaystyle\sum_{j=1}^{N}\hat{G}_jH_j+\hat{F}_j\hat{K}_j & \displaystyle\sum_{j=1}^{N}\hat{F}_j\hat{H}_j-\hat{G}_jK_j
\end{array}
\right)=
\left(
\begin{array}{cc}
1 & 0\\
0 & 1
\end{array}\right);
$$
which means there is a matrix with bounded holomorphic entries
$$
\mathsf{H}(z)=
\left(
\begin{array}{cc}
H_1(z) & -K_1(z) \\
\hat K_1(z) & \hat H_1(z) \\
\vdots & \vdots \\
H_N(z) & -K_N(z) \\
\hat K_N(z) & \hat H_N(z)
\end{array}
\right)
$$
so that $\mathsf{F}(z)\mathsf{H}(z)=\textnormal{Id}_2$.  Rephrasing it yet again, \eqref{e:e1n} and \eqref{e:e2n} hold for the solutions $H_j$ and $K_j$ when $F_j, G_j$ satisfy a Corona condition.

\subsection{Matrix-block computations}
\label{ss:matrix-block}

We now give a more linear-algebraic reformulation of the construction in the previous subsection.  The reformulation has two advantages: it makes manifest the role of an underlying symmetry of $\mathsf{F}$ (induced by the involution $\hat{\cdot}$), and it shows that the symmetrization decouples into a blockwise operation, one block per generator.  Both features will be useful in the Clifford setting later on.

Throughout this subsection, $\mathsf{F}(z)$ is the $2 \times 2N$ matrix of the previous subsection in its \emph{interleaved} form.  It will be convenient to view $\mathsf{F}(z)$ as a row of $N$ blocks of size $2\times 2$:
$$
\mathsf{F}(z)=
\bigl(F^{(1)}(z)\ \cdots\ F^{(N)}(z)\bigr),
\qquad
F^{(j)}(z)=
\begin{pmatrix}
F_j(z) & -G_j(z)\\
\hat G_j(z) & \hat{F}_j(z)
\end{pmatrix}
= M_{F_j}(z).
$$

Define the $2 \times 2$ matrix
$$
J_2=
\begin{pmatrix}
0 & 1\\
-1 & 0
\end{pmatrix}
$$
and the $2N \times 2N$ block-diagonal matrix
$$
J_{2N}=\textnormal{Id}_N\otimes J_2 = \operatorname{diag}(J_2,\dots,J_2)\in \mathbb{R}^{2N\times 2N}
$$
with $N$ copies of $J_2$ on the diagonal.  Note that $J_2^{-1} = -J_2 = J_2^\top$ and $J_{2N}^{-1} = -J_{2N} = J_{2N}^\top$.

A direct blockwise computation shows that, for each $j$,
$$
\widehat{F^{(j)}}=J_2\, F^{(j)}\, J_2^{-1},
$$
and consequently
\begin{equation}
\label{eq:symmetry-F}
\widehat{\mathsf{F}}(z)=J_2\,\mathsf{F}(z)\,J_{2N}^{-1}.
\end{equation}

Suppose that the matrix Corona Theorem yields a right inverse
$$
\mathsf{P}(z)\in H^\infty(\mathbb D)^{2N\times 2},
\qquad
\mathsf{F}(z)\mathsf{P}(z)=\textnormal{Id}_2.
$$
Writing $\mathsf{P}(z)$ in the interleaved form of the previous subsection, it is convenient to view $\mathsf{P}(z)$ as a column of $N$ blocks of size $2 \times 2$:
$$
\mathsf{P}(z)=
\begin{pmatrix}
P^{(1)}(z) \\
\vdots\\
P^{(N)}(z)
\end{pmatrix},
\qquad
P^{(j)}(z)=
\begin{pmatrix}
p_{1,j}(z) & q_{1,j}(z)\\
p_{2,j}(z) & q_{2,j}(z)
\end{pmatrix}.
$$

Taking $\hat{\cdot}$ of the identity $\mathsf{F}(z) \mathsf{P}(z)=\textnormal{Id}_2$, using \eqref{eq:symmetry-F}, we have
$$
\hat{\mathsf{F}}(z)\hat{\mathsf{P}}(z) = \textnormal{Id}_2
\quad\Longrightarrow\quad
J_2\,\mathsf{F}(z)\,J_{2N}^{-1}\,\hat{\mathsf{P}}(z) = \textnormal{Id}_2
\quad\Longrightarrow\quad
\mathsf{F}(z)\,J_{2N}^{-1}\,\hat{\mathsf{P}}(z)\,J_2 = J_2^{-1}\,\textnormal{Id}_2\,J_2 = \textnormal{Id}_2.
$$
Hence the averaged matrix
\begin{equation}
\label{eq:def-H-avg}
\mathsf{H}(z):=\frac12\Big(\mathsf{P}(z)+J_{2N}^{-1}\widehat{\mathsf{P}}(z)J_2\Big)
\end{equation}
also satisfies
$$
\mathsf{F}(z) \mathsf{H}(z)=\textnormal{Id}_2.
$$

Because $J_{2N}$ is block-diagonal, the symmetrization \eqref{eq:def-H-avg} decouples across the blocks $P^{(j)}$: writing $\mathsf{H}(z) = \bigl(H^{(1)}(z) \ldots H^{(N)}(z)\bigr)^\top$ stacked vertically with each $H^{(j)}$ a $2 \times 2$ block, we have
$$
H^{(j)}(z) = \frac12\bigl(P^{(j)}(z) + J_2^{-1} \widehat{P^{(j)}}(z) J_2\bigr).
$$
A direct computation gives
$$
J_2^{-1}\,\widehat{P^{(j)}}(z)\,J_2=
\begin{pmatrix}
\hat{q}_{2,j}(z) & -\hat{p}_{2,j}(z)\\
-\hat{q}_{1,j}(z) & \hat{p}_{1,j}(z)
\end{pmatrix},
$$
and so
$$
H^{(j)}(z)=
\frac12
\begin{pmatrix}
p_{1,j}+\hat{q}_{2,j} & q_{1,j}-\hat{p}_{2,j}\\
p_{2,j}-\hat{q}_{1,j} & q_{2,j}+\hat{p}_{1,j}
\end{pmatrix}=
\begin{pmatrix}
H_j & -K_j\\
\hat K_j & \hat{H}_j
\end{pmatrix},
$$
where
$$
H_j=\frac{p_{1,j}+\hat{q}_{2,j}}{2},
\qquad
K_j=\frac{\hat p_{2,j}-q_{1,j}}{2}.
$$
The functions $\hat{H}_j$ and $\hat{K}_j$ are then obtained from $H_j$ and $K_j$ by applying the involution $\hat{\cdot}$.

These are exactly the same formulas as derived coordinatewise in the previous subsection.  Assembling the blocks $H^{(j)}$ back into $\mathsf{H}(z)$,
$$
\mathsf{H}(z)=
\left(
\begin{array}{cc}
H_1(z) & -K_1(z) \\
\hat K_1(z) & \hat H_1(z) \\
\vdots & \vdots \\
H_N(z) & -K_N(z) \\
\hat K_N(z) & \hat H_N(z)
\end{array}
\right),
$$
which is the same matrix $\mathsf{H}(z)$ as in the previous subsection.  Since $\mathsf{F}(z) \mathsf{H}(z)=\textnormal{Id}_2$, we obtain
\begin{align*}
\sum_{j=1}^N F_j H_j-G_j\widehat{K}_j&=1,\\
\sum_{j=1}^N F_j K_j+G_j\widehat{H}_j&=0.
\end{align*}


\section{\texorpdfstring{General Formula for the $*$-Product of Slice Monogenic Functions}{General Formula for the *-Product of Slice Monogenic Functions}}

\subsection{Preliminaries on slice monogenic functions}
\label{sec:slice-monogenic-preliminaries}

We collect here the basic definitions and structural results for slice monogenic functions over real Clifford algebras, following Colombo--Sabadini--Struppa~\cite{MR2684426}.

\subsubsection{The Clifford algebra $\mathbb{R}_n$}

The real Clifford algebra $\mathbb{R}_n$ is the associative algebra over $\mathbb{R}$ generated by $n$ basis elements $I_1, \ldots, I_n$, called the \emph{imaginary units}, subject to the defining relations
\begin{equation}\label{eq:clifford-relations}
I_i I_j + I_j I_i = -2 \delta_{ij}.
\end{equation}
A generic element $x \in \mathbb{R}_n$ admits the expansion
\[
x = \sum_{|A| = 0}^{n} x_A \, I_A,
\]
where $A = i_1 \cdots i_t$ is an increasing multi-index of length $t = |A|$, $I_A := I_{i_1} \cdots I_{i_t}$, and $I_\emptyset := 1$.

An tuple $(x_1, \ldots, x_n) \in \mathbb{R}^n$ is identified with the $1$-vector $\underline x := x_1 I_1 + \cdots + x_n I_n$, while real numbers are identified with $0$-vectors. With a slight abuse of notation, elements
\[
\vec x = x_0 + x_1 I_1 + \cdots + x_n I_n = x_0 + \underline x \in \mathbb{R}_n^0 \oplus \mathbb{R}_n^1
\]
are identified with $(x_0, x_1, \ldots, x_n) \in \mathbb{R}^{n+1}$, and we write $\vec x \in \mathbb{R}^{n+1}$, $\underline x \in \mathbb{R}^n$. The norm of $\vec x$ is $|\vec x| = (x_0^2 + \cdots + x_n^2)^{1/2}$.

\subsubsection{The sphere $\mathbb{S}$ of unit $1$-vectors}

We denote by $\mathbb{S}$ the sphere of unit $1$-vectors in $\mathbb{R}^{n+1}$,
\[
\mathbb{S} := \bigl\{\, \underline x = I_1 x_1 + \cdots + I_n x_n \in \mathbb{R}^{n+1} \,\bigm|\, x_1^2 + \cdots + x_n^2 = 1 \,\bigr\}.
\]
For any $I \in \mathbb{S}$, the Clifford relations give $I^2 = -1$. The two-dimensional real subspace
\[
\mathbb{C}_I := \mathbb{R} + I \, \mathbb{R}
\]
is algebra-isomorphic to the complex plane $\mathbb{C}$. We refer to $\mathbb{C}_I$ as a \emph{complex plane}; elements of $\mathbb{C}_I$ are written $z = u + Iv$ with $u, v \in \mathbb{R}$.

\subsubsection{Slice monogenic functions}

\begin{definition}[Slice monogenic function]\label{def:slice-monogenic}
Let $U \subseteq \mathbb{R}^{n+1}$ be an open set. A real-differentiable function $f \colon U \to \mathbb{R}_n$ is called \emph{(left) slice monogenic} (or \emph{$s$-monogenic} for short) if, for every $I \in \mathbb{S}$, the restriction $f_I := f|_{U \cap \mathbb{C}_I}$ satisfies
\begin{equation}\label{eq:s-monogenic-def}
\frac{1}{2}\Bigl(\frac{\partial}{\partial u} + I \frac{\partial}{\partial v}\Bigr) f_I(u + Iv) = 0.
\end{equation}
We denote by $\mathcal M(U)$ the set of $s$-monogenic functions on $U$.
\end{definition}

\noindent We write \eqref{eq:s-monogenic-def} compactly as $\overline\partial_I f_I = 0$. The corresponding notion of right $s$-monogenicity is defined by $f_I \overline \partial_I = 0$; the theory is symmetric, and we restrict attention to left $s$-monogenic functions throughout.

\begin{definition}[$s$-derivative]\label{def:s-derivative}
The \emph{$I$-derivative} is defined by
\[
\partial_I := \frac{1}{2}\Bigl(\frac{\partial}{\partial u} - I \frac{\partial}{\partial v}\Bigr).
\]
For $f \in \mathcal M(U)$, the \emph{$s$-derivative} of $f$ is
\[
\partial_s f(\vec x) :=
\begin{cases}
\partial_I f(\vec x), & \vec x = u + Iv,\ v \ne 0,\\
\partial_u f(u), & u \in \mathbb{R}.
\end{cases}
\]
It coincides with $\partial_u f_I$ on each $\mathbb{C}_I$.
\end{definition}

\subsubsection{The Splitting Lemma}

\begin{lemma}[Splitting Lemma {\cite{MR2684426}*{Lemma~2.4}}]\label{lem:splitting}
Let $U \subseteq \mathbb{R}^{n+1}$ be open and let $f \in \mathcal M(U)$. For every $I = I_1 \in \mathbb{S}$, let $I_2, \ldots, I_n$ be a completion to a basis of $\mathbb{R}_n$ such that $I_i I_j + I_j I_i = -2\delta_{ij}$. Then there exist $2^{n-1}$ holomorphic functions
\[
F_A \colon U \cap \mathbb{C}_I \to \mathbb{C}_I, \qquad A \subseteq \{2, \ldots, n\},
\]
such that for every $z = u + Iv \in U \cap \mathbb{C}_I$,
\begin{equation}\label{eq:splitting}
f_I(z) = \sum_{|A| = 0}^{n-1} F_A(z)\, I_A,
\end{equation}
where $I_A := I_{i_1} \cdots I_{i_s}$ for $A = i_1 \cdots i_s$ with $i_1 < \cdots < i_s$, and $I_\emptyset := 1$.
\end{lemma}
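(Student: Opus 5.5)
The plan is to reduce to the holomorphic, one-variable setting on a single slice. Fix $I=I_1\in\mathbb{S}$ and complete it to $I_2,\dots,I_n$ with $I_iI_j+I_jI_i=-2\delta_{ij}$. Restrict $f$ to $U\cap\mathbb{C}_I$ and write $f_I=\sum_A f_A I_A$, where $A$ runs over increasing multi-indices in $\{2,\dots,n\}$ and $f_A:U\cap\mathbb{C}_I\to\mathbb{C}_I$ are the real-linear combinations of the coordinate functions of $f$. The first step is to separate these components by commutation properties. Each $I_A$ with $A\subseteq\{2,\dots,n\}$ either commutes or anticommutes with $I_1$, according to the parity of $|A|$. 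So I will regroup the full expansion $f_I=\sum_{B\subseteq\{1,\dots,n\}} x_B I_B$ by pairing $I_B$ with $I_1I_B$, or equivalently by writing $I_B=I_1^{\epsilon}I_A$ with $\epsilon\in\{0,1\}$ and $A\subseteq\{2,\dots,n\}$. This gives
\[
f_I(u+Iv)=\sum_{A\subseteq\{2,\dots,n\}}\bigl(a_A(u,v)+I\,b_A(u,v)\bigr)I_A, \qquad F_A:=a_A+Ib_A,
\]
with $a_A,b_A$ real valued. This decomposition is unique, since $\{I_1^\epsilon I_A\}$ is a real basis of $\mathbb{R}_n$.

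Next apply $\overline\partial_I=\tfrac12(\partial_u+I\partial_v)$ to $f_I$. The operator acts by left multiplication by $I$, and the units $I_A$ sit on the right. Hence
\[
\overline\partial_I f_I=\sum_A\bigl(\overline\partial_I F_A\bigr)I_A,
\]
where $\overline\partial_I F_A=\tfrac12\bigl((\partial_u a_A-\partial_v b_A)+I(\partial_u b_A+\partial_v a_A)\bigr)\in\mathbb{C}_I$. The products $\mathbb{C}_I\cdot I_A$ for distinct $A$ span complementary real subspaces; this is again uniqueness of the expansion in the basis $\{I_1^\epsilon I_A\}$. So the hypothesis $\overline\partial_I f_I=0$ forces $\overline\partial_I F_A=0$ for every $A$. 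These are exactly the Cauchy--Riemann equations for $F_A$ viewed as a $\mathbb{C}_I\cong\mathbb{C}$-valued function of $z=u+Iv$. Since $f$ is real differentiable, each $F_A$ is holomorphic on the open set $U\cap\mathbb{C}_I$ (a $C^1$ solution of Cauchy--Riemann, or a differentiable one via Looman--Menchoff if only differentiability is assumed).

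I expect the only delicate point to be the bookkeeping in the first step. The coefficient must be placed on the left of $I_A$, with $I_1$ absorbed into the coefficient, so that left multiplication by $I$ in $\overline\partial_I$ never has to pass through $I_A$; this avoids any sign changes from anticommutation. Once that ordering is fixed, the splitting $\mathbb{R}_n=\bigoplus_A\mathbb{C}_I I_A$ as a real direct sum gives the conclusion immediately. The count of $2^{n-1}$ functions is the number of subsets of $\{2,\dots,n\}$.
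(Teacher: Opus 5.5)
The paper does not prove this lemma; it quotes it from Colombo--Sabadini--Struppa. Your argument is the standard proof from that source, and it is correct: you set $F_A=a_A+I\,b_A$ so that $f_I=\sum_A F_A I_A$ along the real direct sum $\mathbb{R}_n=\bigoplus_{A}\mathbb{C}_I\,I_A$, and you note that $\overline\partial_I$ acts only on the left coefficients, so each $F_A$ satisfies the Cauchy--Riemann equations.

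Two small refinements. First, the fact that $\{I_B\}$ is a real basis of $\mathbb{R}_n$ comes from $I_1,\dots,I_n$ being an orthonormal basis of the $1$-vectors, not from the anticommutation relations alone. For instance, in $\mathbb{R}_3$ the triple $I_1,I_2,I_1I_2$ satisfies those relations, yet its triple product equals $-1$. Second, Looman--Menchoff is unnecessary: real differentiability together with the Cauchy--Riemann equations at each point already gives complex differentiability, and Goursat's theorem finishes.
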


\subsubsection{Slice domains and axial symmetry}

\begin{definition}[Slice domain, axial symmetry]\label{def:slice-domain}
A domain $U \subseteq \mathbb{R}^{n+1}$ is called a \emph{slice domain} (or \emph{$s$-domain}) if $U \cap \mathbb{R} \ne \emptyset$ and $U \cap \mathbb{C}_I$ is a domain in $\mathbb{C}_I$ for every $I \in \mathbb{S}$. The domain $U$ is \emph{axially symmetric} if, for every $u + Iv \in U$, the entire $(n-1)$-sphere $\{u + Jv : J \in \mathbb{S}\}$ is contained in $U$.
\end{definition}

Unless otherwise stated, we restrict our attention to axially symmetric slice domains.

\subsubsection{The Identity Principle and power series}

\begin{thm}[Identity Principle {\cite{MR2684426}*{Theorem~2.6}}]\label{thm:identity-principle}
Let $U \subseteq \mathbb{R}^{n+1}$ be an $s$-domain, let $f \colon U \to \mathbb{R}_n$ be $s$-monogenic, and let $Z := \{\vec x \in U : f(\vec x) = 0\}$. If there exists $I \in \mathbb{S}$ such that $\mathbb{C}_I \cap Z$ has an accumulation point in $U \cap \mathbb{C}_I$, then $f \equiv 0$ on $U$.
\end{thm}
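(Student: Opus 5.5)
We plan to reduce to the classical identity principle for holomorphic functions of one complex variable, one slice at a time, using the Splitting Lemma (Lemma~\ref{lem:splitting}). Fix the $I\in\mathbb{S}$ from the hypothesis, set $I_1:=I$, and complete it to $I_1,\dots,I_n$ as in Lemma~\ref{lem:splitting}. This gives
\[
f_I(z)=\sum_{A\subseteq\{2,\dots,n\}}F_A(z)\,I_A,
\]
with each $F_A\colon U\cap\mathbb{C}_I\to\mathbb{C}_I$ holomorphic.

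The first step is to show that $f_I(z)=0$ at a point forces $F_A(z)=0$ for every $A$. Write $F_A=\alpha_A+I_1\beta_A$ with $\alpha_A,\beta_A$ real. Then
\[
f_I=\sum_A\alpha_A I_A+\sum_A\beta_A\,I_1I_A .
\]
The $2^n$ elements $I_A$ and $I_1I_A=I_{\{1\}\cup A}$, for $A\subseteq\{2,\dots,n\}$, are exactly the standard basis monomials of $\mathbb{R}_n$ built from $I_1,\dots,I_n$. They are therefore linearly independent over $\mathbb{R}$, so all $\alpha_A,\beta_A$ vanish at that point.

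Consequently every $F_A$ vanishes on $Z\cap\mathbb{C}_I$, which has an accumulation point in $U\cap\mathbb{C}_I$. Since $U$ is an $s$-domain, $U\cap\mathbb{C}_I$ is connected, so the one-variable identity theorem gives $F_A\equiv0$. Hence $f_I\equiv0$ on $U\cap\mathbb{C}_I$.

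Next we propagate to the other slices. Because $U\cap\mathbb{R}\neq\emptyset$ and $U$ is open, $U\cap\mathbb{R}$ is a nonempty open subset of $\mathbb{R}$. It therefore contains an interval, all of whose points are accumulation points. Real points lie in every slice $\mathbb{C}_J$, so $f\equiv0$ on $U\cap\mathbb{R}$. For an arbitrary $J\in\mathbb{S}$, we then run the same argument with $J$ in place of $I$: split $f_J$ via Lemma~\ref{lem:splitting}, note that each component vanishes on the real interval, which has accumulation points in the domain $U\cap\mathbb{C}_J$, and conclude $f_J\equiv0$. Since $U=\bigcup_{J\in\mathbb{S}}(U\cap\mathbb{C}_J)$, this gives $f\equiv0$ on $U$.

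The only step requiring care is the passage from $f_I=0$ to $F_A=0$, that is, the linear independence of the family $\{I_A,\,I_1I_A\}$. This is standard once the completed basis satisfies the Clifford relations \eqref{eq:clifford-relations}. It also uses that the accumulation point lies in $U\cap\mathbb{C}_I$, which is given. Axial symmetry is not needed; only the $s$-domain property is used.
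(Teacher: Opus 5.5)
Your proof is correct and is essentially the standard argument from \cite{MR2684426}, which the paper quotes without proof: split $f_I$ into holomorphic components and apply the one-variable identity theorem to each on the connected set $U\cap\mathbb{C}_I$. You then transfer the vanishing to every other slice $\mathbb{C}_J$ through the real interval contained in $U\cap\mathbb{R}$.

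One small precision concerns the linear independence of $\{I_A,\,I_1I_A\}$. It should rest on $I_2,\dots,I_n$ being unit $1$-vectors orthonormal to $I_1$, so that these elements are the image of the standard monomial basis under an automorphism of $\mathbb{R}_n$. The Clifford relations alone do not suffice: in $\mathbb{R}_3$ the triple $I_1, I_2, I_1I_2$ satisfies them, yet its monomials are linearly dependent.
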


\begin{prop}[Power-series expansion at a real point {\cite{MR2684426}*{Proposition~2.7}}]\label{prop:power-series}
Let $f$ be $s$-monogenic on an $s$-domain $U \subseteq \mathbb{R}^{n+1}$, and let $y_0 \in U \cap \mathbb{R}$. Then $f$ admits the power series representation
\[
f(\vec x) = \sum_{n=0}^{\infty} (\vec x - y_0)^n \, \frac{1}{n!}\frac{\partial^n f}{\partial u^n}(y_0)
\]
on the ball $B(y_0, R) \subseteq U$, where $R$ is the largest positive real number with $B(y_0, R) \subseteq U$.
\end{prop}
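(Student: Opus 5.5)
The plan is to derive the power series from the Splitting Lemma together with the one–variable Taylor expansion on each complex slice, and then use the fact that the coefficients do not depend on the slice. Fix $y_0\in U\cap\mathbb{R}$ and $I\in\mathbb{S}$, and complete $I=I_1$ to $I_2,\dots,I_n$ as in Lemma~\ref{lem:splitting}. This gives
\[
f_I(z)=\sum_{A}F_A(z)I_A,
\]
where each $F_A$ is holomorphic on $U\cap\mathbb{C}_I$. The disc $B(y_0,R)\cap\mathbb{C}_I$ is contained in $U\cap\mathbb{C}_I$. Cauchy's theorem for $\mathbb{C}_I$-valued holomorphic functions therefore gives, on that disc,
\[
F_A(z)=\sum_k (z-y_0)^k \frac{F_A^{(k)}(y_0)}{k!}.
\]

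Next, regroup using the fact that $(z-y_0)^k\in\mathbb{C}_I$ multiplies on the left. We then have
\[
f_I(z)=\sum_k (z-y_0)^k\frac{1}{k!}\sum_A F_A^{(k)}(y_0)I_A .
\]
On $\mathbb{C}_I$ holomorphy means $\partial_v=I\partial_u$, so complex derivatives at a real point are real derivatives. Consequently
\[
\sum_A F_A^{(k)}(y_0)I_A=\partial_u^k f(y_0).
\]
Here $\partial_u^k f(y_0)$ is the $k$-th derivative of the restriction of $f$ to the real axis, and this is the same for every $I$ because $U\cap\mathbb{R}\subset\mathbb{C}_I$ for all $I$.

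Finally, every $\vec x\in B(y_0,R)$ lies in $B(y_0,R)\cap\mathbb{C}_I$ for some $I$, namely $I=\underline x/|\underline x|$, or any $I$ when $\vec x$ is real. On that slice $\vec x-y_0=z-y_0$, so the series above, whose coefficients are now independent of $I$, converges to $f(\vec x)$. The radius is the full $R$ because the Cauchy expansion on each disc of radius $R$ around $y_0$ converges in that whole disc. Absolute convergence of the series over $k$ is inherited from the finitely many components $F_A$.

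The main obstacle is confirming that the Clifford-valued coefficients are slice-independent. This reduces to verifying, for a left $s$-monogenic $f$ (where $\overline\partial_I$ acts with $I$ on the left), that $\partial_u^k f_I(y_0)=\sum_A F_A^{(k)}(y_0)I_A$. This holds because $I_A$ is a constant on the right, so differentiation passes through it.
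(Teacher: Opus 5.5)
Your proof is correct. The paper does not prove this proposition; it quotes it from \cite{MR2684426}. Your argument is essentially the standard one behind that result: Taylor-expand the Splitting Lemma components $F_A$ on the disc $B(y_0,R)\cap\mathbb{C}_I$, regroup, and note that the coefficients $\sum_A F_A^{(k)}(y_0)I_A=\partial_u^k f(y_0)$ are derivatives along the real axis and hence independent of $I$.
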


\subsubsection{The Representation Formula and Extension Theorem}

For $\vec x = x_0 + \underline x \in \mathbb{R}^{n+1}$, we set
\[
I_{\vec x} := \begin{cases} \dfrac{\underline x}{|\underline x|}, & \underline x \ne 0,\\ \text{any element of } \mathbb{S}, & \text{otherwise.} \end{cases}
\]

\begin{thm}[Representation Formula {\cite{MR2684426}*{Theorem~3.1}}]\label{thm:representation}
Let $U \subseteq \mathbb{R}^{n+1}$ be an axially symmetric $s$-domain and let $f \in \mathcal M(U)$. For any vector $\vec x = x_0 + I_{\vec x} |\underline x| \in U$ and any choice of $I \in \mathbb{S}$,
\begin{equation}\label{eq:representation}
f(\vec x) = \frac{1}{2}\bigl[f(x_0 + I|\underline x|) + f(x_0 - I|\underline x|)\bigr] + \frac{1}{2}\, I_{\vec x}\, I\, \bigl[f(x_0 - I|\underline x|) - f(x_0 + I|\underline x|)\bigr].
\end{equation}
\end{thm}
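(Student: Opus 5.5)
The plan is to define the right-hand side of \eqref{eq:representation} as a new function $g$ on $U$, show that $g$ is $s$-monogenic and agrees with $f$ on one complex plane $\mathbb{C}_I$, and then conclude $g\equiv f$ from the Identity Principle (Theorem~\ref{thm:identity-principle}).

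\textbf{Definition of $g$.} Fix $I\in\mathbb{S}$. For $\vec x=u+J v\in U$ with $J\in\mathbb{S}$ and $v\in\mathbb{R}$, set
\[
g(u+Jv):=\tfrac12\bigl[f(u+Iv)+f(u-Iv)\bigr]+\tfrac12\,J I\bigl[f(u-Iv)-f(u+Iv)\bigr].
\]
Axial symmetry of $U$ guarantees that the points $u\pm Iv$ lie in $U$. The representation $u+Jv=u+(-J)(-v)$ gives the same value, because replacing $(J,v)$ by $(-J,-v)$ swaps $f(u+Iv)$ and $f(u-Iv)$ and flips the sign of $J$. Hence $g$ is well defined, and with $J=I_{\vec x}$, $v=|\underline x|$ it is exactly the right-hand side of \eqref{eq:representation}. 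On real points ($v=0$) we get $g=f$, and on $\mathbb{C}_I$ itself (take $J=I$) we get $g(u+Iv)=f(u+Iv)$, using $I^2=-1$.

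\textbf{Real differentiability.} Away from the real axis this is clear. Near the real axis, put
\[
\alpha(u,v)=\tfrac12[f(u+Iv)+f(u-Iv)],\qquad \beta(u,v)=\tfrac12 I[f(u-Iv)-f(u+Iv)].
\]
Then $\alpha$ is even in $v$ and $\beta$ is odd in $v$, and both are smooth since $f_I$ is holomorphic. Hence $\beta(u,v)/v$ extends to a smooth even function of $v$. This gives
\[
g(\vec x)=\alpha(u,|\underline x|)+\underline x\,\frac{\beta(u,|\underline x|)}{|\underline x|},
\]
and both terms are smooth functions of $\vec x$, since smooth even functions of $v$ are smooth functions of $v^2=|\underline x|^2$. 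I expect this regularity check to be the only delicate point.

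\textbf{$s$-monogenicity.} Fix $J\in\mathbb{S}$ and apply $\partial_u+J\partial_v$ to $g(u+Jv)$. From $\overline\partial_I f_I=0$ we have $\partial_v[f(u+Iv)]=I\,f_u^+$ and $\partial_v[f(u-Iv)]=-I\,f_u^-$, where $f_u^\pm:=\partial_u f(u\pm Iv)$. The first bracket contributes
\[
\tfrac12(f_u^++f_u^-)+\tfrac12 JI(f_u^+-f_u^-).
\]
The second term contributes
\[
\tfrac12 JI(f_u^--f_u^+)+\tfrac12 J^2I\bigl(-If_u^--If_u^+\bigr)=\tfrac12 JI(f_u^--f_u^+)-\tfrac12(f_u^-+f_u^+).
\]
All terms cancel, so $\overline\partial_J g_J=0$ for every $J$, and $g\in\mathcal M(U)$.

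\textbf{Conclusion.} The difference $f-g$ is $s$-monogenic on the $s$-domain $U$ and vanishes on the open set $U\cap\mathbb{C}_I$, which certainly contains accumulation points. By Theorem~\ref{thm:identity-principle}, $f\equiv g$ on $U$, which is \eqref{eq:representation}.
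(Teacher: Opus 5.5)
Your proof is correct. The paper gives no proof of its own and quotes this result from the cited reference; your route is the standard one used there: define the right-hand side as a function $g$ on $U$, verify that $g$ is $s$-monogenic, observe that $g$ agrees with $f$ on $U\cap\mathbb{C}_I$, and conclude with the Identity Principle. Your explicit check that $g$ is real differentiable across the real axis, via the even/odd decomposition in $v$, fills in a point that the standard argument usually passes over.
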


\begin{cor}[Slice-independence of the symmetric/antisymmetric parts {\cite{MR2684426}*{Corollary~3.2}}]\label{cor:slice-independence}
Let $U \subseteq \mathbb{R}^{n+1}$ be an axially symmetric $s$-domain and let $f \in \mathcal M(U)$. The quantities
\[
\tfrac{1}{2}\bigl[f(x_0 + I|\underline x|) + f(x_0 - I|\underline x|)\bigr] \qquad\text{and}\qquad I\, \tfrac{1}{2}\bigl[f(x_0 - I|\underline x|) - f(x_0 + I|\underline x|)\bigr]
\]
are independent of $I \in \mathbb{S}$.
\end{cor}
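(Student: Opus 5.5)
The plan is to derive the corollary directly from the Representation Formula (Theorem~\ref{thm:representation}) by applying it at two points that share the same real part and the same modulus of the vector part but have opposite imaginary directions. This lets us separate the two quantities in question.

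Fix $\vec x = x_0 + I_{\vec x}\, r \in U$ with $r = |\underline x|$. For $I \in \mathbb{S}$ write
\[
A(I) := \tfrac12\bigl[f(x_0+Ir)+f(x_0-Ir)\bigr], \qquad B(I) := I\,\tfrac12\bigl[f(x_0-Ir)-f(x_0+Ir)\bigr].
\]
First dispose of the degenerate case $r=0$. Then $A(I)=f(x_0)$ and $B(I)=0$ for every $I$, so there is nothing to prove.

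Assume now $r>0$, and consider the point $\vec x' := x_0 - I_{\vec x}\, r$.
\begin{itemize}
\item By axial symmetry of $U$, $\vec x'$ lies in $U$.
\item The vector part of $\vec x'$ has the same modulus $r$ and direction $I_{\vec x'} = -I_{\vec x}$.
\item For any fixed $I \in \mathbb{S}$, the Representation Formula at $\vec x$ and at $\vec x'$, with the same $I$, reads
\[
f(\vec x) = A(I) + I_{\vec x}\, B(I), \qquad f(\vec x') = A(I) - I_{\vec x}\, B(I).
\]
Here we used that the formula \eqref{eq:representation} can be written as $f(\vec x)=A(I)+I_{\vec x}B(I)$, and that $A(I)$, $B(I)$ depend only on $x_0$, $r$ and $I$, not on the direction of $\vec x$.
\end{itemize}

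Adding and subtracting the two identities gives
\[
A(I) = \tfrac12\bigl[f(\vec x)+f(\vec x')\bigr], \qquad I_{\vec x}\, B(I) = \tfrac12\bigl[f(\vec x)-f(\vec x')\bigr].
\]
The right-hand sides do not involve $I$. Multiplying the second identity on the left by $I_{\vec x}^{-1} = -I_{\vec x}$ (recall $I_{\vec x}^2=-1$) gives
\[
B(I) = -\tfrac12\, I_{\vec x}\bigl[f(\vec x)-f(\vec x')\bigr],
\]
which is also independent of $I$. This is the claim.

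The only delicate point is bookkeeping rather than analysis. One must check that $B(I)$ is exactly the coefficient multiplying $I_{\vec x}$ in \eqref{eq:representation}, with $I$ placed to the left of the bracket, and that the same $I$ is used in both applications of the Representation Formula. Both are immediate from the form of \eqref{eq:representation}.
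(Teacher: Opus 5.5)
Your argument is correct. Applying the Representation Formula (Theorem~\ref{thm:representation}) with one base unit $I$ at the antipodal pair $x_0\pm I_{\vec x}|\underline x|$, then adding and subtracting, expresses both quantities through $I$-free data; this is the standard derivation, and the paper gives no proof but quotes the statement from Colombo--Sabadini--Struppa, where it is obtained from the Representation Formula in essentially this way.
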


\begin{thm}[Extension Theorem {\cite{MR2684426}*{Theorem~3.3}}]\label{thm:extension}
Let $J \in \mathbb{S}$ and let $D$ be a domain in $\mathbb{C}_J$ that is symmetric with respect to the real axis and satisfies $D \cap \mathbb{R} \ne \emptyset$. Define the axially symmetric $s$-domain
\[
U_D := \bigcup_{u + Jv \in D,\, I \in \mathbb{S}} (u + Iv) \subseteq \mathbb{R}^{n+1}.
\]
If $f \colon D \to L_J$ is holomorphic, then the function $\mathrm{ext}(f) \colon U_D \to \mathbb{R}_n$ defined by
\begin{equation}\label{eq:extension-formula}
\mathrm{ext}(f)(u + Iv) := \tfrac{1}{2}\bigl[f(u + Jv) + f(u - Jv)\bigr] + I \tfrac{1}{2}\, J\, \bigl[f(u - Jv) - f(u + Jv)\bigr]
\end{equation}
is the unique $s$-monogenic extension of $f$ to $U_D$. The same conclusion holds if $f \colon D \to \mathbb{R}_n$ is merely assumed to satisfy $\overline\partial_J f(u + Jv) = 0$.
\end{thm}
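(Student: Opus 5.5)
We prove the Extension Theorem (Theorem~\ref{thm:extension}) in three steps: first that $\mathrm{ext}(f)$ is well defined, then that it is $s$-monogenic on every slice, and finally that it restricts to $f$ on $\mathbb{C}_J$ and is the only such extension. Throughout, write $f(u+Jv)=\alpha(u,v)+J\beta(u,v)$ in the form suggested by the formula, with
\[
\alpha(u,v):=\tfrac12\bigl[f(u+Jv)+f(u-Jv)\bigr],\qquad \beta(u,v):=\tfrac12\, J\bigl[f(u-Jv)-f(u+Jv)\bigr].
\]
Both take values in $\mathbb{R}_n$, with $\alpha$ even and $\beta$ odd in $v$. With this notation $\mathrm{ext}(f)(u+Iv)=\alpha(u,v)+I\beta(u,v)$.

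\textbf{Well-definedness.} A point of $U_D$ off the real axis has two representations, $u+Iv=u+(-I)(-v)$. The parity of $\alpha$ and $\beta$ gives $\alpha(u,-v)+(-I)\beta(u,-v)=\alpha(u,v)+I\beta(u,v)$, so both representations yield the same value. On the real axis $\beta(u,0)=0$, so the value there is $f(u)$, independent of $I$.

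\textbf{Slice monogenicity.} Since $J^2=-1$, we have $f=\alpha+J\beta$ on $D$. The hypothesis $\overline\partial_J f=0$ reads $(\partial_u+J\partial_v)(\alpha+J\beta)=0$. Split this equation into its parts even and odd in $v$, using that $\partial_v$ reverses parity and that left multiplication by $J$ is invertible. The even part gives $\partial_u\alpha-\partial_v\beta=0$ and the odd part gives $J(\partial_v\alpha+\partial_u\beta)=0$, so $\alpha,\beta$ satisfy the Cauchy--Riemann system
\[
\partial_u\alpha=\partial_v\beta,\qquad \partial_v\alpha=-\partial_u\beta .
\]
This system does not involve $J$. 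Hence for any $I\in\mathbb{S}$,
\[
(\partial_u+I\partial_v)(\alpha+I\beta)=(\partial_u\alpha-\partial_v\beta)+I(\partial_v\alpha+\partial_u\beta)=0,
\]
because $I$ is a constant and left multiplication by $I$ commutes with the derivatives. Real differentiability of $\mathrm{ext}(f)$ on $U_D$ away from the real axis follows from the smoothness of $\alpha,\beta$. Near the real axis I would argue as follows: $\alpha$ is even in $v$ and $\beta$ is odd, so $\alpha$ is a smooth function of $(u,v^2)$ and $\beta=v\,\gamma(u,v^2)$ with $\gamma$ smooth. Then $I\beta(u,v)=\underline x\,\gamma(u,|\underline x|^2)$, which is smooth in $\vec x$. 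This regularity near the real axis is the main technical obstacle.

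\textbf{Extension and uniqueness.} On $\mathbb{C}_J$ we have $\mathrm{ext}(f)(u+Jv)=\alpha+J\beta=f(u+Jv)$, so $\mathrm{ext}(f)$ extends $f$. For uniqueness, let $g$ be another $s$-monogenic extension. Then $g-\mathrm{ext}(f)$ vanishes on $D\subseteq\mathbb{C}_J$, and $D$ is open and nonempty, hence contains accumulation points of this zero set. The set $U_D$ is an $s$-domain: each slice $U_D\cap\mathbb{C}_I$ is a copy of $D$, hence a domain, and $D\cap\mathbb{R}\ne\emptyset$. The Identity Principle (Theorem~\ref{thm:identity-principle}) therefore gives $g\equiv\mathrm{ext}(f)$.

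The first statement of the theorem, for $f\colon D\to L_J$ holomorphic, is the special case in which $\overline\partial_J f=0$ holds automatically, so the argument above covers both statements.
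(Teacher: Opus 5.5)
The paper does not prove this statement; it quotes it from \cite{MR2684426} as background, so the comparison here is with the standard argument in that reference. Your proof is correct. The standard argument checks $\overline\partial_I\,\mathrm{ext}(f)=0$ by differentiating the two terms $f(u\pm Jv)$ directly. It substitutes $\partial_v f=J\,\partial_u f$, which is what $\overline\partial_J f=0$ says, at both points, and the terms cancel. Extension is then immediate, and uniqueness comes from the Identity Principle, exactly as in your last step.

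You instead pass to the even/odd pair $(\alpha,\beta)$. Parity in $v$ turns $\overline\partial_J f=0$ into a Cauchy--Riemann system for $(\alpha,\beta)$ in which $J$ no longer appears as a multiplier, and $\overline\partial_I(\alpha+I\beta)=0$ for every $I\in\mathbb{S}$ follows at once. This stem-function viewpoint explains why one formula works on all slices simultaneously. It also makes you handle two things the direct computation does not address by itself: well-definedness under the double representation $u+Iv=u+(-I)(-v)$, and real differentiability across the real axis.

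Two small points would make the argument airtight.

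\emph{Smoothness of $\alpha,\beta$.} Complete $J=I_1$ to a basis as in Lemma~\ref{lem:splitting} and write $f=\sum_A F_A I_A$ with $\mathbb{C}_J$-valued $F_A$. Then $\overline\partial_J f=0$ splits into $\overline\partial_J F_A=0$ for each $A$. So every $F_A$ is holomorphic, and $\alpha,\beta$ are real analytic. The factorizations $\alpha=g(u,v^2)$ and $\beta=v\,\gamma(u,v^2)$ near real points then follow directly from power series expansions. For merely smooth functions you would need Whitney's theorem on even functions.

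\emph{Hypotheses of the Identity Principle.} The Identity Principle needs $U_D$ to be open and connected in $\mathbb{R}^{n+1}$, not just slice by slice. Openness holds because $U_D$ is the preimage of $D$ under the continuous map $x_0+\underline x\mapsto x_0+J|\underline x|$. Connectedness holds because every slice $U_D\cap\mathbb{C}_I$ is connected and contains the common real points $D\cap\mathbb{R}\neq\emptyset$.
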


\begin{thm}[Smoothness {\cite{MR2684426}*{Theorem~3.4}}]\label{thm:smoothness}
Every $s$-monogenic function $f \colon U \to \mathbb{R}_n$ on an axially symmetric $s$-domain $U \subseteq \mathbb{R}^{n+1}$ is infinitely differentiable on $U$.
\end{thm}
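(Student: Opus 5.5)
The statement to prove is Theorem~\ref{thm:smoothness}: every $s$-monogenic function on an axially symmetric $s$-domain is $C^\infty$. I would derive it from the Representation Formula (Theorem~\ref{thm:representation}), which expresses $f$ through its restriction to one fixed slice $\mathbb{C}_I$.

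\emph{Step 1 (notation).} Fix $I\in\mathbb{S}$. Define on $U\cap\mathbb{C}_I$ the restriction $f_I(u+Iv):=f(u+Iv)$. By Definition~\ref{def:slice-monogenic}, $\overline\partial_I f_I=0$. Via the Splitting Lemma (Lemma~\ref{lem:splitting}), $f_I=\sum_A F_A I_A$ with each $F_A$ holomorphic in $z=u+Iv$. Hence $f_I$ is real-analytic, in particular $C^\infty$, in $(u,v)$.

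\emph{Step 2 (the representation formula in coordinates).} For $\vec x=x_0+\underline x$, set
\[
\alpha(x_0,r)=\tfrac12\bigl[f_I(x_0+Ir)+f_I(x_0-Ir)\bigr],\qquad \beta(x_0,r)=\tfrac12 I\bigl[f_I(x_0-Ir)-f_I(x_0+Ir)\bigr].
\]
Theorem~\ref{thm:representation} then reads $f(\vec x)=\alpha(x_0,|\underline x|)+\frac{\underline x}{|\underline x|}\,\beta(x_0,|\underline x|)$ for $\underline x\neq 0$. The maps $\alpha$ and $\beta$ are $C^\infty$ in $(x_0,r)$ on the symmetric domain $\{(x_0,r): x_0+Ir\in U\}$. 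Moreover, $\alpha$ is even in $r$ and $\beta$ is odd in $r$.

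\emph{Step 3 (away from the real axis).} On $U\setminus\mathbb{R}$, the maps $\vec x\mapsto|\underline x|$ and $\vec x\mapsto\underline x/|\underline x|$ are smooth. So $f$ is smooth there as a composition of smooth maps.

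\emph{Step 4 (near the real axis: the main obstacle).} Near points with $\underline x=0$, $|\underline x|$ is not smooth, so the composition argument breaks down. Here I would invoke Whitney's theorem on even functions. A smooth function even in $r$ has the form $\alpha(x_0,r)=a(x_0,r^2)$ with $a$ smooth. A smooth function odd in $r$ has the form $\beta(x_0,r)=r\,b(x_0,r^2)$ with $b$ smooth. This gives
\[
f(\vec x)=a(x_0,|\underline x|^2)+\underline x\, b(x_0,|\underline x|^2),
\]
in which $|\underline x|^2=x_1^2+\cdots+x_n^2$ is a polynomial in the coordinates. Therefore $f$ is $C^\infty$ on all of $U$; at real points the formula agrees with $f$ by continuity.

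\emph{Step 4, alternative route.} If I wanted to avoid Whitney's theorem, I would argue with the power series instead. Near a real point $y_0$, Proposition~\ref{prop:power-series} gives $f(\vec x)=\sum_k(\vec x-y_0)^k c_k$, with $c_k=\frac1{k!}\partial_u^k f(y_0)$. The main point to check is that this series converges locally uniformly, together with all its derivatives, in the $\mathbb{R}^{n+1}$ variables. This follows because $|(\vec x-y_0)^k|=|\vec x-y_0|^k$ and the coefficients satisfy the usual Cauchy estimates on the slice. The function is then real-analytic, hence $C^\infty$, on a ball $B(y_0,R)$.
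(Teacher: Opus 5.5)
The paper does not prove this statement. It quotes it without proof from Colombo--Sabadini--Struppa \cite{MR2684426} as a preliminary, so there is no in-paper argument to compare against.

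Your argument is correct. Steps~1--3 are sound: axial symmetry is exactly what puts $x_0\pm I|\underline x|$ in $U$ and makes the domain of $\alpha,\beta$ symmetric in $r$. You also correctly identify the only genuine difficulty, the non-smoothness of $|\underline x|$ and $\underline x/|\underline x|$ on the real axis. Both of your fixes work, and they buy different things.

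The power-series variant uses only tools the paper already records: Proposition~\ref{prop:power-series} and Cauchy estimates for the components $F_A$ on one slice. It gives real analyticity near real points, but it uses holomorphy essentially. Note that $|(\vec x-y_0)^k|=|\vec x-y_0|^k$ controls only the series itself. For the differentiated series you should add the estimate $|\partial^\beta(\vec x-y_0)^k|\le k^{|\beta|}\,|\vec x-y_0|^{k-|\beta|}$. This follows from the product rule and the identity $|pc|=|p|\,|c|$, valid whenever $p$ is a paravector and $c\in\mathbb{R}_n$.

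The Whitney variant imports an external theorem, which you must use in its version with parameters (smooth dependence on $x_0$). Also, Whitney's $a,b$ live on $s=r^2\ge 0$, so you should extend them smoothly across $s=0$ (e.g.\ by Seeley's extension) before applying the chain rule with $s=|\underline x|^2$. In return it gives the explicit local normal form $f(\vec x)=a(x_0,|\underline x|^2)+\underline x\,b(x_0,|\underline x|^2)$. It also shows that, once the Representation Formula is available, smoothness of $f$ follows from smoothness of the single restriction $f_I$, with no further use of holomorphy.

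A minor point: at real points no continuity argument is needed, since $a(x_0,0)=\alpha(x_0,0)=f(x_0)$ directly.
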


\begin{prop}[$s$-derivative is $s$-monogenic {\cite{MR2684426}*{Proposition~3.5}}]\label{prop:s-derivative-monogenic}
Let $U \subseteq \mathbb{R}^{n+1}$ be axially symmetric and $f \in \mathcal M(U)$. Then $\partial_s f \in \mathcal M(U)$, and
\[
\partial_s^n f(u + Iv) = \frac{\partial^n f}{\partial u^n}(u + Iv).
\]
\end{prop}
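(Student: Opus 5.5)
The statement to prove is Proposition~\ref{prop:s-derivative-monogenic}: for $f\in\mathcal M(U)$ on an axially symmetric $U$, the $s$-derivative $\partial_s f$ is again $s$-monogenic, and $\partial_s^n f=\partial^n f/\partial u^n$ on each slice. The plan is to reduce everything to holomorphic function theory on a single slice.

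\textbf{Reduction to a slice.} Fix $I\in\mathbb{S}$ and consider $f_I$ on $U\cap\mathbb{C}_I$. By Theorem~\ref{thm:smoothness}, $f$ is $C^\infty$, so all derivatives below make sense and commute. Off the real axis, Definition~\ref{def:s-derivative} gives
\[
\partial_s f=\partial_I f_I=\tfrac12(\partial_u-I\partial_v)f_I .
\]
The defining equation \eqref{eq:s-monogenic-def} reads $\partial_u f_I=-I\partial_v f_I$. Substituting it yields
\[
\partial_I f_I=\tfrac12\bigl(\partial_u f_I+\partial_u f_I\bigr)=\partial_u f_I .
\]
On the real axis, $\partial_s f=\partial_u f$ by definition, and since $f_I$ is smooth, $\partial_u f_I$ extends continuously across $\mathbb{R}$. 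Hence, on every slice, $(\partial_s f)_I=\partial_u f_I$. In particular the definition is consistent: it does not depend on the choice of $I$ at real points, because there $\partial_u$ is taken along $\mathbb{R}$, which lies in every slice.

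\textbf{Monogenicity.} Next I check that $\partial_u f_I$ satisfies \eqref{eq:s-monogenic-def}. Since $I$ is a constant and derivatives commute on smooth functions,
\[
\tfrac12(\partial_u+I\partial_v)\,\partial_u f_I=\partial_u\Bigl(\tfrac12(\partial_u+I\partial_v)f_I\Bigr)=0 .
\]
This holds for every $I\in\mathbb{S}$, and $\partial_s f$ is real-differentiable (indeed smooth) as a derivative of the smooth function $f$. So $\partial_s f\in\mathcal M(U)$.

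\textbf{Higher derivatives.} I induct on $n$. Since $\partial_s f\in\mathcal M(U)$, the same computation applies with $\partial_s^{\,n-1}f$ in place of $f$:
\[
(\partial_s^{\,n}f)_I=\partial_u\bigl(\partial_s^{\,n-1}f\bigr)_I=\partial_u^{\,n}f_I .
\]

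\textbf{Main obstacle.} The only delicate point is well-definedness and smoothness of $\partial_s f$ as a global function on $U$, as opposed to slice by slice. At non-real points each $\vec x$ lies in exactly one pair of slices $\mathbb{C}_I=\mathbb{C}_{-I}$, and $\partial_{-I}=\partial_I$ when $\vec x$ is written as $u+(-I)(-v)$, so the value is unambiguous. Global smoothness follows from the identity $\partial_s f=\partial_{x_0}f$, which holds because $u=x_0$ on every slice. Thus $\partial_s f$ is just the partial derivative of the $C^\infty$ function $f$ in the real direction, which settles regularity at once.
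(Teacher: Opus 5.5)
The paper does not prove this proposition; it quotes it from Colombo--Sabadini--Struppa \cite{MR2684426}. There is therefore no internal proof to compare against.

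Your argument is correct and complete, and it is the standard one:
\begin{itemize}
\item The slice Cauchy--Riemann equation $\partial_u f_I=-I\partial_v f_I$ turns $\partial_I f_I$ into $\partial_u f_I$.
\item Hence $\partial_s f=\partial f/\partial x_0$ on all of $U$. This also settles well-definedness, both for the choice $(I,v)$ versus $(-I,-v)$ at non-real points and at real points.
\item Left multiplication by the constant $I$ and equality of mixed partials give $\overline\partial_I\,\partial_u f_I=\partial_u\,\overline\partial_I f_I=0$.
\item Induction on the order of differentiation (which the statement also calls $n$) yields the higher-order formula.
\end{itemize}

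You also correctly identified the one nontrivial input: global real differentiability of $\partial_s f$ as a function on $U$, which is required by the definition of $\mathcal M(U)$. You take it from Theorem~\ref{thm:smoothness} via $\partial_s f=\partial f/\partial x_0$. This is exactly where the paper's standing assumption that $U$ is an axially symmetric $s$-domain is used. The slicewise computation only gives regularity within each $\mathbb{C}_I$ and says nothing about behaviour transverse to the slices.
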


\begin{cor}[Geometric structure of the image {\cite{MR2684426}*{Corollary~3.6}}]\label{cor:image-sphere}
Let $U \subseteq \mathbb{R}^{n+1}$ be an axially symmetric $s$-domain and let $f \colon U \to \mathbb{R}_n$ be $s$-monogenic. For all $u, v \in \mathbb{R}$ such that $u + Iv \in U$, there exist $a, b \in \mathbb{R}_n$ depending only on $u, v$ such that
\begin{equation}\label{eq:image-sphere}
f(u + Iv) = a + Ib \qquad \text{for every } I \in \mathbb{S}.
\end{equation}
In particular, the image of the $(n-1)$-sphere $\{u + Iv : I \in \mathbb{S}\}$ under $f$ is $\{a + Ib : I \in \mathbb{S}\}$.
\end{cor}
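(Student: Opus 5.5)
The statement is Corollary~\ref{cor:image-sphere}. I would deduce it directly from the Representation Formula (Theorem~\ref{thm:representation}), or equivalently from Corollary~\ref{cor:slice-independence}.

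Fix $u,v\in\mathbb{R}$ with $u+Iv\in U$ for some, hence by axial symmetry every, $I\in\mathbb{S}$. We may assume $v\ge 0$: replacing $(v,I)$ by $(-v,-I)$ describes the same point, and $-I\in\mathbb{S}$. Fix once and for all a reference unit $J\in\mathbb{S}$. The points $u\pm Jv$ lie in $U$, again by axial symmetry, and I would set
\[
a:=\tfrac12\bigl[f(u+Jv)+f(u-Jv)\bigr],\qquad b:=\tfrac12\,J\bigl[f(u-Jv)-f(u+Jv)\bigr].
\]
These depend only on $u,v$ (and on the fixed choice of $J$). Corollary~\ref{cor:slice-independence} shows they are in fact independent of $J$, though this is not strictly needed.

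Now take any $I\in\mathbb{S}$ and apply Theorem~\ref{thm:representation} at the point $\vec x=u+Iv$, using the auxiliary unit $J$ in place of $I$ there. If $v>0$, then $x_0=u$, $|\underline x|=v$ and $I_{\vec x}=I$, so the formula reads
\[
f(u+Iv)=\tfrac12\bigl[f(u+Jv)+f(u-Jv)\bigr]+\tfrac12\,I\,J\bigl[f(u-Jv)-f(u+Jv)\bigr]=a+Ib.
\]
The last step uses associativity of $\mathbb{R}_n$ to write $IJ[\cdots]=I(J[\cdots])$. If $v=0$, both sides reduce to $f(u)$ and $b=0$, so the identity is trivial.

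The statement about the image then follows: as $I$ ranges over $\mathbb{S}$, the point $u+Iv$ sweeps out the sphere, and its image is $\{a+Ib : I\in\mathbb{S}\}$.

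The only delicate point is bookkeeping. One has to check that the roles of $I$ and $J$ in Theorem~\ref{thm:representation} match the conventions stated there, namely that $I_{\vec x}=I$ when $v>0$, and handle the sign of $v$. Everything else is immediate.
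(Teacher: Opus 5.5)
Your proof is correct and is the standard argument: the paper does not prove this corollary itself but quotes it from \cite{MR2684426}, where it is obtained exactly as you do. There one reads off $a=\tfrac12[f(u+Jv)+f(u-Jv)]$ and $b=\tfrac12 J[f(u-Jv)-f(u+Jv)]$ from the Representation Formula (Theorem~\ref{thm:representation}) and uses Corollary~\ref{cor:slice-independence} to see that they do not depend on $J$; your bookkeeping for $I_{\vec x}=I$ when $v>0$, the trivial case $v=0$, and the sign of $v$ is all in order, and in fact the same formulas for $a,b$ work verbatim for $v<0$.
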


\subsection{\texorpdfstring{The $\ast$-product}{The *-product}}

We next collect a useful formula for the $\ast$-product that improves upon the work \cite{MR2684426}.  We set the stage for this result now.

Let $\{I_1, I_2, \ldots, I_n\}$ be generators of the Clifford algebra $\mathbb{R}_n$ satisfying
\[
I_i I_j + I_j I_i = -2\delta_{ij}.
\]
Fix $I = I_1$ and let $\mathbb{C}_I = \{u + Iv : u, v \in \mathbb{R}\}$ be the associated complex plane.  For a subset $A \subseteq \{2, \ldots, n\}$ with elements $A = \{a_1 < a_2 < \cdots < a_k\}$, define
\[
I_A = I_{a_1} I_{a_2} \cdots I_{a_k}, \qquad I_\emptyset = 1.
\]



Let $U \subseteq \mathbb{R}^{n+1}$ be an axially symmetric $s$-domain and let $f, g : U \to \mathbb{R}_n$ be $s$-monogenic functions. For any $I \in \mathbb{S}$ set $I = I_1$ and consider a completion to a basis $\{I_1, \ldots, I_n\}$ of $\mathbb{R}_n$ such that $I_i I_j + I_j I_i = -2\delta_{ij}$. The Splitting Lemma guarantees the existence of holomorphic functions $F_A, G_A : U \cap \mathbb{C}_I \to \mathbb{C}_I$ such that, for all $z = u + Iv \in U \cap \mathbb{C}_I$,
\[
f_I(z) = \sum_A F_A(z) I_A, \qquad g_I(z) = \sum_B G_B(z) I_B,
\]
where $A, B$ are subsets of $\{2, \ldots, n\}$.


We define the function $f_I * g_I : U \cap \mathbb{C}_I \to \mathbb{R}_n$, giving the $\ast$-product, as
\begin{align}
f_I * g_I(z) &= \sum_{|A| \text{ even}} (-1)^{\frac{|A|}{2}} F_A(z) G_A(z) + \sum_{|A| \text{ odd}} (-1)^{\frac{|A|+1}{2}} F_A(z) \overline{G_A(\bar{z})} \notag \\
&\quad + \sum_{\substack{|A| \text{ even} \\ B \neq A}} F_A(z) G_B(z) I_A I_B + \sum_{\substack{|A| \text{ odd} \\ B \neq A}} F_A(z) \overline{G_B(\bar{z})} I_A I_B.\label{e:sliceproduct_CSS}
\end{align}
This is the $\ast$ product introduced in \cite{MR2684426}.
If we define the conjugation operator $\hat{\cdot}$ acting on functions by
$
\hat{G}(z) = \overline{G(\bar{z})}
$,
then we can write \eqref{e:sliceproduct_CSS} in the following equivalent form:
\begin{align}
f_I * g_I(z) &= \sum_{|A| \text{ even}} (-1)^{\frac{|A|}{2}} F_A(z) G_A(z) + \sum_{|A| \text{ odd}} (-1)^{\frac{|A|+1}{2}} F_A(z) \hat{G}_A(z) \notag \\
&\quad + \sum_{\substack{|A| \text{ even} \\ B \neq A}} F_A(z) G_B(z) I_A I_B + \sum_{\substack{|A| \text{ odd} \\ B \neq A}} F_A(z) \hat{G}_B(z) I_A I_B. \label{e:sliceproduct_CSS2}
\end{align}
From equation \eqref{e:sliceproduct_CSS} or \eqref{e:sliceproduct_CSS2} we can write the $\ast$-product in the following way:
\[
f_I * g_I(z) = \underbrace{\sum_{|A| \text{ even}} (-1)^{\frac{|A|}{2}} F_A(z) G_A(z) + \sum_{|A| \text{ odd}} (-1)^{\frac{|A|+1}{2}} F_A(z) \hat{G}_A(z)}_{\text{diagonal terms } (A = B)} + \underbrace{\sum_{A \neq B} F_A(z) \tilde{G}_B(z) I_A I_B}_{\text{off-diagonal terms}}.
\]
where
\[
\tilde{G}_B(z) =
\begin{cases}
G_B(z) & \text{if } |A| \text{ is even}, \\
\hat{G}_B(z) = \overline{G_B(\bar{z})} & \text{if } |A| \text{ is odd}.
\end{cases}
\]


\subsection{The General Formula}

The goal is to give an alternate formulation of \eqref{e:sliceproduct_CSS} or \eqref{e:sliceproduct_CSS2} where we collapse the sums over $A\neq B$ into a single sum.  This entails cancellations in the product $I_AI_B$ when there are common entries in $A$ and $B$ and reordering the product to a standard form.  We do this now and explain the computations.

Let $f_I, g_I : U \cap \mathbb{C}_I \to \mathbb{R}_n$ be holomorphic functions with expansions
\[
f_I(z) = \sum_{A \subseteq \{2,\ldots,n\}} F_A(z) I_A, \qquad g_I(z) = \sum_{B \subseteq \{2,\ldots,n\}} G_B(z) I_B,
\]
where $F_A, G_B : U \cap \mathbb{C}_I \to \mathbb{C}_I$ are $\mathbb{C}_I$-valued holomorphic functions.

Then the $*$-product is given by
\begin{equation}
\label{eq:newproduct}
f_I * g_I(z) = \sum_{C \subseteq \{2,\ldots,n\}} H_C(z) \, I_C
\end{equation}
where the coefficient $H_C$ is the function given by:
\begin{equation}
\label{eq:newcoefficients}
H_C(z) := \sum_{\substack{A, B \subseteq \{2,\ldots,n\} \\ A \triangle B = C}} \text{sign}(A,B) \cdot F_A(z) \cdot \tilde{G}_B(z)
\end{equation}
with the rules for the sign and conjugation are defined in the following manner.  The conjugation rule is
\[
\tilde{G}_B(z) =
\begin{cases}
G_B(z) & \text{if } |A| \text{ is even}, \\[4pt]
\hat{G}_B(z) & \text{if } |A| \text{ is odd}.
\end{cases}
\]
And the sign rule is distinguished by the diagonal and off diagonal cases.  For diagonal terms ($A = B$, i.e., $C = \emptyset$):
\[
\text{sign}(A, A) =
\begin{cases}
(-1)^{\frac{|A|}{2}} & \text{if } |A| \text{ is even}, \\[4pt]
(-1)^{\frac{|A|+1}{2}} & \text{if } |A| \text{ is odd}.
\end{cases}
\]
For off-diagonal terms ($A \neq B$, i.e. $C\neq \emptyset$):
\[
\text{sign}(A, B) = (-1)^{\sigma(A,B)}
\]
where $\sigma(A,B)$ is the Clifford sign exponent defined below.  The sign $(-1)^{\sigma(A,B)}$ arises from reducing the Clifford product $I_A I_B$ to standard form:
\[
I_A I_B = (-1)^{\sigma(A,B)} I_{A \triangle B}
\]
where $A \triangle B = (A \cup B) \setminus (A \cap B)$ is the symmetric difference, and
\[
\sigma(A, B) = |A \cap B| + \tau(A, B)
\]
with
\[
\tau(A, B) = \bigl|\{(a, b) \in A \times B : a > b\}\bigr|.
\]
The term $|A \cap B|$ accounts for signs from $I_i^2 = -1$, and $\tau(A,B)$ counts the transpositions needed to reorder the product.

\subsection{Setup and Goal}

We start with equations \eqref{e:sliceproduct_CSS} from Colombo--Sabadini--Struppa, \cite{MR2684426}*{Equation (9)}:
\begin{align}
f_I * g_I(z) &= \sum_{|A| \text{ even}} (-1)^{\frac{|A|}{2}} F_A(z) G_A(z) + \sum_{|A| \text{ odd}} (-1)^{\frac{|A|+1}{2}} F_A(z) \hat{G}_A(z) \notag \\
&\quad + \sum_{\substack{|A| \text{ even} \\ B \neq A}} F_A(z) G_B(z) I_A I_B + \sum_{\substack{|A| \text{ odd} \\ B \neq A}} F_A(z) \hat{G}_B(z) I_A I_B
\end{align}
where $\hat{G}(z) = \overline{G(\bar{z})}$, and $A, B$ range over subsets of $\{2, \ldots, n\}$.

The improvement we offer in this direction is the following closed form for the product $f_I\ast g_I$, with our goal to show that this equals
\[
f_I * g_I(z) = \sum_{C \subseteq \{2,\ldots,n\}} H_C(z) \, I_C
\]
where
\[
H_C(z) := \sum_{\substack{A, B \\ A \triangle B = C}} \mathrm{sign}(A,B) \cdot F_A(z) \cdot \tilde{G}_B(z)
\]
with the sign and conjugation rules we have identified.

\subsubsection{Preliminary Lemmas}

\begin{lemma}[Clifford Product Reduction]
\label{lem:clifford}
For subsets $A, B \subseteq \{2, \ldots, n\}$ with elements written in increasing order, we have
\[
I_A I_B = (-1)^{\sigma(A,B)} I_{A \triangle B}
\]
where $A \triangle B = (A \cup B) \setminus (A \cap B)$ is the symmetric difference and
\[
\sigma(A, B) = |A \cap B| + \tau(A, B), \qquad \tau(A,B) = |\{(a,b) \in A \times B : a > b\}|.
\]
\end{lemma}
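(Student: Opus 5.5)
The plan is to prove the identity $I_A I_B = (-1)^{\sigma(A,B)} I_{A\triangle B}$ by reordering the word $I_{a_1}\cdots I_{a_k} I_{b_1}\cdots I_{b_m}$ into increasing order, using only the defining relations \eqref{eq:clifford-relations}. These give two elementary moves: for $i\neq j$, $I_iI_j=-I_jI_i$, so swapping two distinct adjacent units costs a factor $-1$; and $I_i^2=-1$, so deleting an adjacent equal pair also costs a factor $-1$.

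\textbf{Step 1 (sorting).} I would merge the two increasing lists. Starting from $I_{a_1}\cdots I_{a_k}I_{b_1}\cdots I_{b_m}$, move each $I_{b_j}$ leftward past every $I_{a_i}$ with $a_i>b_j$. Each such move is a transposition of distinct generators, since $a_i>b_j$ forces $a_i\neq b_j$, so it contributes a factor $-1$. The number of transpositions is exactly $\tau(A,B)=|\{(a,b)\in A\times B: a>b\}|$.

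After this step the word is the multiset $A\cup B$ (with repetitions) in weakly increasing order. For each $c\in A\cap B$, the $b$-copy of $I_c$ stops immediately to the right of the $a$-copy of $I_c$. It does not pass the $a$-copy, because $a=b$ is not counted in $\tau$, and all $a_i<c$ lie further left. So the word has the form $\prod_{c\in A\cup B}$ (in increasing order) of $I_c$ or $I_cI_c$. One point needs care here: the relative order of the $b$'s among themselves is preserved and the $a$'s are untouched, so no further swaps are needed and the count is exact.

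\textbf{Step 2 (cancellation).} Each adjacent pair $I_cI_c$ with $c\in A\cap B$ is replaced by $-1$. This is a real scalar and commutes with everything, so it can be pulled out without further sign changes. This contributes $(-1)^{|A\cap B|}$. The remaining product is the increasing product over $(A\cup B)\setminus(A\cap B)=A\triangle B$, namely $I_{A\triangle B}$.

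Combining the two steps gives the total sign $(-1)^{\tau(A,B)+|A\cap B|}=(-1)^{\sigma(A,B)}$. As a sanity check, $A=B=\{i\}$ gives $\sigma=1$, matching $I_i^2=-1$.

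\textbf{Main obstacle.} The only delicate point is the bookkeeping in Step 1: no swap between equal indices should be counted, and the merge should yield adjacent duplicates. I would make this rigorous by induction on $m=|B|$. Write $I_AI_B=(I_AI_{b_1}\cdots I_{b_{m-1}})I_{b_m}$ and apply the inductive hypothesis to $A$ and $B'=B\setminus\{b_m\}$. Then move $I_{b_m}$ into place in $I_{A\triangle B'}$. Since $b_m$ exceeds every element of $B'$, the elements of $A\triangle B'$ larger than $b_m$ are exactly the $a\in A$ with $a>b_m$. So $I_{b_m}$ passes precisely those, and then either cancels against $I_{b_m}$ if $b_m\in A$ or is inserted if $b_m\notin A$. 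This adds exactly $|\{a>b_m\}|+[b_m\in A]$ to the exponent, which closes the induction.
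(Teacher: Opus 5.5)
Your proof is correct and takes the same route as the paper: first count the $\tau(A,B)$ anticommuting swaps needed to merge the word $I_B$ into $I_A$, then cancel the $|A\cap B|$ adjacent pairs using $I_c^2=-1$. Your induction on $|B|$ adds rigor the paper lacks, since it verifies the bookkeeping (no swaps between equal indices, and duplicates ending up adjacent) that the paper's proof only asserts.
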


\begin{proof}
Starting from $I_A I_B = I_{a_1} \cdots I_{a_p} I_{b_1} \cdots I_{b_q}$ where $A = \{a_1 < \cdots < a_p\}$ and $B = \{b_1 < \cdots < b_q\}$, we need to:
\begin{enumerate}
    \item Reorder to bring matching elements together, and
    \item Cancel pairs using $I_i^2 = -1$.
\end{enumerate}

In the first step, we need to count the transpositions for reordering.  To move each $I_{b_j}$ to its correct position, we must swap it past all $I_{a_i}$ with $a_i > b_j$. Each swap contributes a factor of $-1$ since $I_{a_i} I_{b_j} = -I_{b_j} I_{a_i}$ for $a_i \neq b_j$.  The total number of such swaps is
\[
\tau(A, B) = \sum_{b \in B} |\{a \in A : a > b\}| = |\{(a,b) \in A \times B : a > b\}|.
\]

The second step, cancels common elements.  Canceling common elements.  For each $i \in A \cap B$, after reordering we have $I_i I_i = -1$. This contributes a factor of $(-1)^{|A \cap B|}$.

Finally, we need to determine the remaining elements.  After cancellation, the remaining elements are exactly those in $A \triangle B$, and they appear in increasing order.

Putting these steps together therefore yields,
\[
I_A I_B = (-1)^{\tau(A,B)}  (-1)^{|A \cap B|}  I_{A \triangle B} = (-1)^{\sigma(A,B)} I_{A \triangle B}.
\]
\end{proof}

\begin{lemma}[Conjugation Rule]
\label{lem:conjugation}
For $I_A$ with $A \subseteq \{2, \ldots, n\}$ and a holomorphic function $G(z)$ on $\mathbb{C}_I$,
\[
I_A \cdot G(z) =
\begin{cases}
G(z) \cdot I_A & \text{if } |A| \text{ is even}, \\
\hat{G}(z) \cdot I_A & \text{if } |A| \text{ is odd},
\end{cases}
\]
where $\hat{G}(z) = \overline{G(\bar{z})}$.
\end{lemma}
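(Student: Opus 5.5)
The statement is about a $\mathbb{C}_I$-valued function, with $I=I_1$, multiplied on the left by $I_A$, where $A\subseteq\{2,\ldots,n\}$. The whole argument rests on one fact: $I_1$ anticommutes with each $I_k$, $k\ge 2$. I plan to first prove the pointwise commutation rule for a single element of $\mathbb{C}_I$, and then pass to functions through the power-series expansion of Proposition~\ref{prop:power-series}. This second step is where the hat $\hat{G}(z)=\overline{G(\bar z)}$ (rather than $\overline{G(z)}$) appears, because the product with $I_A$ is the one relevant to the $\ast$-product.

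\emph{Step 1 (constants).} Let $c=a+I_1 b\in\mathbb{C}_I$ with $a,b\in\mathbb{R}$. By the Clifford relations \eqref{eq:clifford-relations}, $I_kI_1=-I_1I_k$ for every $k\in A$. Moving $I_1$ across the $|A|$ factors of $I_A=I_{a_1}\cdots I_{a_k}$ therefore gives $I_AI_1=(-1)^{|A|}I_1I_A$. Since reals are central, it follows that $I_A c=(a+(-1)^{|A|}I_1 b)\,I_A$. This equals $c\,I_A$ when $|A|$ is even and $\bar c\,I_A$ when $|A|$ is odd, where the bar is complex conjugation in $\mathbb{C}_I$. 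The count is the same sign bookkeeping as in the proof of Lemma~\ref{lem:clifford}.

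\emph{Step 2 (functions).} Expand $G(z)=\sum_{m\ge0} z^m g_m$ with $g_m\in\mathbb{C}_I$, as in Proposition~\ref{prop:power-series} at a real point. In the slice (i.e.\ $\ast$) sense, the variable is treated as commuting with constant Clifford coefficients, so $I_A\ast(z^m g_m)=z^m (I_A g_m)$. Applying Step~1 to each coefficient gives $z^m g_m I_A$ when $|A|$ is even and $z^m\overline{g_m}\,I_A$ when $|A|$ is odd.

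Resumming, the even case returns $G(z)I_A$. In the odd case one gets $\sum_m z^m\overline{g_m}=\overline{\sum_m \bar z^{\,m} g_m}=\overline{G(\bar z)}=\hat G(z)$. Uniqueness of the slice extension (Theorem~\ref{thm:extension} together with the Identity Principle, Theorem~\ref{thm:identity-principle}) then upgrades the identity from a ball around a real point to all of $U\cap\mathbb{C}_I$.

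The main obstacle is interpretive, not computational. Read as a literal pointwise product in $\mathbb{R}_n$, Step~1 gives $I_AG(z)=\overline{G(z)}\,I_A$, not $\hat G(z)I_A$. The lemma must therefore be understood as the rule for how $I_A$ passes through a slice function inside the $\ast$-product \eqref{e:sliceproduct_CSS}, where the variable commutes with the Clifford units. I will make this convention explicit at the start, so that the lemma matches exactly the $\hat{\cdot}$ appearing in \eqref{e:sliceproduct_CSS2}.
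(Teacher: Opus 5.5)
Your proposal is correct, and it is more careful than the paper's own argument. Step~1 coincides with the paper's proof: $I_AI_1=(-1)^{|A|}I_1I_A$, hence $I_Aw=wI_A$ or $\bar w\,I_A$ for $w\in\mathbb{C}_I$. Passing to functions through the Taylor expansion is also the paper's plan.

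The real difference is in Step~2. The paper asserts the identity as a literal pointwise product. It justifies the hat by applying the commutation rule ``first to the coefficient and then to the power of $z$''. Carrying that out gives $I_Az^mg_m=\bar z^{\,m}\bar g_m I_A$, hence $\overline{G(z)}\,I_A$ rather than $\overline{G(\bar z)}\,I_A$. The literal pointwise claim is in fact false: for $G(z)=z$, $A=\{2\}$, $z=I_1$ it would assert $I_2I_1=I_1I_2$.

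You read the lemma explicitly as $I_A\ast G=\hat G\,I_A$, with the variable commuting past the constant $I_A$ and only the Taylor coefficients conjugated. That reading is exactly what produces the $\overline{G_B(\bar z)}$ in \eqref{e:sliceproduct_CSS}. So your route gives a correct statement with a correct proof, while the paper's gains only brevity, through a step that does not hold as written.

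A minor remark: on $\mathbb{B}$ the Taylor series at $0$ already converges on all of $\mathbb{B}\cap\mathbb{C}_I$, so your identity-principle continuation is not needed there. On a general axially symmetric $s$-domain, the $\ast$-product away from such a ball is \emph{defined} by \eqref{e:sliceproduct_CSS}, so that step amounts to a consistency check with this definition.
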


\begin{proof}
The key is to determine whether $I_A$ commutes or anticommutes with $I_1$.

For a single generator $I_k$ with $k \geq 2$:
\[
I_k I_1 = -I_1 I_k
\]
so $I_k$ anticommutes with $I_1$.  For a product $I_A = I_{a_1} \cdots I_{a_m}$ where $|A| = m$:
\[
I_A I_1 = I_{a_1} \cdots I_{a_m} I_1 = (-1)^m I_1 I_{a_1} \cdots I_{a_m} = (-1)^{|A|} I_1 I_A.
\]
Now, for $w = u + vI_1$ on $\mathbb{C}_I$, with $u,v\in\mathbb{R}$:
\[
I_A \cdot w = I_A(u + vI_1) = u I_A +  I_A vI_1  = u I_A + (-1)^{|A|} vI_1 I_A=(u+(-1)^{|A|}vI_1)I_A.
\]
This implies:
\begin{itemize}
\item If $|A|$ is even: $I_A\cdot w = (u +  vI_1) I_A = w \cdot I_A$.
\item If $|A|$ is odd: $I_A \cdot w = (u -  vI_1) I_A = \overline{w} \cdot I_A$.
\end{itemize}
Since holomorphic functions are built from powers of $z$ with complex coefficients, we have for any holomorphic $G$:
\begin{itemize}
    \item $|A|$ even: $I_A\cdot G(z) = G(z)\cdot I_A$
    \item $|A|$ odd: $I_A\cdot G(z) = \overline{G(\bar{z})}\cdot I_A$.
\end{itemize}
This follows since we apply the observation about commuting $w$ and $I_A$ first to the coefficient and then to the power of $z$.
\end{proof}

\begin{lemma}
\label{l:trivialcounting}
For any non-negative integer $k$,
\[
(-1)^{\frac{k(k+1)}{2}} =
\begin{cases}
(-1)^{\frac{k}{2}} & \text{if } k \text{ is even}, \\
(-1)^{\frac{(k+1)}{2}} & \text{if } k \text{ is odd}.
\end{cases}
\]
\end{lemma}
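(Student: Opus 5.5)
The plan is a direct parity computation. The exponent $\frac{k(k+1)}{2}$ is an integer for every non-negative integer $k$, because one of $k$ and $k+1$ is even. The claim is that $(-1)^{k(k+1)/2}$ agrees with the stated sign in each parity class. I will split into the two cases $k$ even and $k$ odd. In each case I will show that the two exponents differ by an even integer.

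\emph{Case $k=2m$ even.} Here $\frac{k(k+1)}{2}=m(2m+1)=2m^2+m$. Since $2m^2$ is even, this exponent is congruent to $m=\frac{k}{2}$ modulo $2$. Hence $(-1)^{k(k+1)/2}=(-1)^{k/2}$.

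\emph{Case $k=2m+1$ odd.} Here $\frac{k(k+1)}{2}=(2m+1)(m+1)=2m(m+1)+(m+1)$. The first summand is even, so the exponent is congruent to $m+1=\frac{k+1}{2}$ modulo $2$. Hence $(-1)^{k(k+1)/2}=(-1)^{(k+1)/2}$.

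There is no real obstacle; the only point needing care is that every exponent that appears is an integer, which holds in each case. I would also note how this will be used. The identity lets the two diagonal sign rules, $\mathrm{sign}(A,A)$ for $|A|$ even and for $|A|$ odd, be packaged as the single expression $(-1)^{|A|(|A|+1)/2}$. That expression equals $(-1)^{\sigma(A,A)}$, since $\sigma(A,A)=|A|+\binom{|A|}{2}$ by Lemma~\ref{lem:clifford}. So the diagonal terms fit the same Clifford-sign rule as the off-diagonal terms.
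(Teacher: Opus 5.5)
Your proof is correct and matches the paper's argument: the same split into $k=2m$ and $k=2m+1$, with the exponent rewritten as $2m^2+m$ and as $2m(m+1)+(m+1)$ respectively and then reduced modulo $2$. Your remark on how the identity is used also agrees with the paper. There, $\sigma(A,A)=|A|+\binom{|A|}{2}$ is computed directly from the definition of $\sigma$, and the lemma then shows that the diagonal signs equal $(-1)^{\sigma(A,A)}$.
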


\begin{proof}
This is done by case analysis by looking at $k$ even and odd.

Case 1: $k$ even. Write $k = 2m$. Then
\[
\frac{k(k+1)}{2} = \frac{2m(2m+1)}{2} = m(2m+1)=2m^2+m.
\]
So $(-1)^{m(2m+1)} = (-1)^{2m^2}(-1)^m=(-1)^m = (-1)^{\frac{k}{2}}$.

Case 2: $k$ odd. Write $k = 2m+1$. Then
\[
\frac{k(k+1)}{2} = \frac{(2m+1)(2m+2)}{2} = (2m+1)(m+1)=2m(m+1)+(m+1).
\]
So $(-1)^{(2m+1)(m+1)} = (-1)^{2m(m+1)+(m+1)} = (-1)^{2m(m+1)}(-1)^{m+1}= (-1)^{\frac{(k+1)}{2}}$.
\end{proof}

\subsection{Main Derivation}

With these lemmas, we show the formulas are equal.
\begin{prop}
\label{prop:productformula}
Equation \eqref{e:sliceproduct_CSS}, equivalently \eqref{e:sliceproduct_CSS2}, is equivalent to
\[
f_I * g_I(z) = \sum_{C \subseteq \{2,\ldots,n\}} H_C(z) \, I_C
\]
where
\[
H_C(z) := \sum_{\substack{A, B \\ A \triangle B = C}} \mathrm{sign}(A,B) \cdot F_A(z) \cdot \tilde{G}_B(z)
\]
with
\[
\tilde{G}_B =
\begin{cases}
G_B & \text{if } |A| \text{ is even}, \\
\hat{G}_B & \text{if } |A| \text{ is odd},
\end{cases}
\]
and
\[
\mathrm{sign}(A, B) =
\begin{cases}
(-1)^{\frac{|A|}{2}} & \text{if } A = B \text{ and } |A| \text{ even}, \\
(-1)^{\frac{(|A|+1)}{2}} & \text{if } A = B \text{ and } |A| \text{ odd}, \\
(-1)^{\sigma(A,B)} & \text{if } A \neq B.
\end{cases}
\]
\end{prop}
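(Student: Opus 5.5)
The plan is to take the formula \eqref{e:sliceproduct_CSS2} and regroup its terms by the value of the symmetric difference $A\triangle B$. This uses only Lemma~\ref{lem:clifford}, Lemma~\ref{lem:conjugation} and Lemma~\ref{l:trivialcounting}. Both the diagonal and off-diagonal parts of \eqref{e:sliceproduct_CSS2} share the conjugation rule $\tilde G_B$: we have $\tilde G_B=G_B$ when $|A|$ is even and $\tilde G_B=\hat G_B$ when $|A|$ is odd. The two even/odd sums can therefore be merged at once into
\[
\sum_{A}(\mathrm{sign}\text{ from diagonal})F_A\tilde G_A+\sum_{A\neq B}F_A\tilde G_B\,I_AI_B .
\]
In fact $\tilde G_B$ is exactly what Lemma~\ref{lem:conjugation} produces when $I_A$ is moved past $G_B$ in $F_AI_A\,G_BI_B$. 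This is the conceptual reason the rule depends only on $|A|$.

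For the off-diagonal sum, apply Lemma~\ref{lem:clifford} to each term. This gives $I_AI_B=(-1)^{\sigma(A,B)}I_{A\triangle B}$, with $A\triangle B\neq\emptyset$ because $A\neq B$. Next partition the pairs $(A,B)$ with $A\neq B$ according to $C=A\triangle B$. This is a genuine partition: for fixed $C$ and $A$, the set $B=A\triangle C$ is uniquely determined. The off-diagonal part then becomes $\sum_{C\neq\emptyset}\bigl(\sum_{A\triangle B=C}(-1)^{\sigma(A,B)}F_A\tilde G_B\bigr)I_C$, which is $\sum_{C\neq\emptyset}H_CI_C$ with the stated sign.

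For the diagonal part, $A\triangle A=\emptyset$ and $I_\emptyset=1$. The sum is therefore $H_\emptyset I_\emptyset$, with signs $(-1)^{|A|/2}$ and $(-1)^{(|A|+1)/2}$ taken directly from \eqref{e:sliceproduct_CSS2}. As a consistency check, Lemma~\ref{lem:clifford} with $A=B$ gives $\sigma(A,A)=|A|+\binom{|A|}{2}=\tfrac{|A|(|A|+1)}{2}$. By Lemma~\ref{l:trivialcounting}, this sign coincides with the diagonal sign rule, so in fact $\mathrm{sign}(A,B)=(-1)^{\sigma(A,B)}$ uniformly.

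Adding the $C=\emptyset$ and $C\neq\emptyset$ pieces gives \eqref{eq:newproduct}. Each step is an equality, so the argument reverses, which yields the claimed equivalence.

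The main obstacle is the bookkeeping in the regrouping. One must check that the sign produced by Lemma~\ref{lem:clifford} is computed with both $A$ and $B$ listed in increasing order, and that the conjugation attached to each term depends on $|A|$ and not on $|C|$. The latter holds because the conjugation comes from commuting $I_A$ past $G_B$ before the reduction $I_AI_B\to I_C$ is performed. With these two points settled, the rest is a reindexing of a finite sum.
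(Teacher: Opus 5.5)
Your proposal is correct and follows essentially the same route as the paper. Both start from \eqref{e:sliceproduct_CSS2}, apply Lemma~\ref{lem:clifford} to the off-diagonal terms, send the diagonal terms to $H_\emptyset$, regroup by $C=A\triangle B$, and reconcile the diagonal signs with $(-1)^{\sigma(A,A)}$ via Lemma~\ref{l:trivialcounting}; the only cosmetic difference is that you merge the even/odd sums through $\tilde G_B$ up front, whereas the paper treats the four sums separately before collecting terms.
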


One subtlety deserves attention: for diagonal terms ($A = B$), equation \eqref{e:sliceproduct_CSS} gives explicit signs $(-1)^{\frac{|A|}{2}}$ or $(-1)^{\frac{(|A|+1)}{2}}$, while for off-diagonal terms, the sign comes from the Clifford product reduction.

That these are consistent with checking what $(-1)^{\sigma(A,A)}$ gives:
\[
\sigma(A, A) = |A \cap A| + \tau(A, A) = |A| + \binom{|A|}{2} = |A| + \frac{|A|(|A|-1)}{2} = \frac{|A|(|A|+1)}{2}.
\]
So $(-1)^{\sigma(A,A)} = (-1)^{\frac{|A|(|A|+1)}{2}}$.  But, by Lemma \ref{l:trivialcounting} we can verify that
\[
(-1)^{\frac{|A|(|A|+1)}{2}} =
\begin{cases}
(-1)^{\frac{|A|}{2}} & \text{if } |A| \text{ even}, \\
(-1)^{\frac{(|A|+1)}{2}} & \text{if } |A| \text{ odd}.
\end{cases}
\]
This shows that the diagonal signs in equation \eqref{e:sliceproduct_CSS} are actually equal to $(-1)^{\sigma(A,A)}$, so we can write a unified formula:
\[
\mathrm{sign}(A, B) = (-1)^{\sigma(A,B)} \quad \text{for all } A, B.
\]

Using the sign convention just observed, we can restate an equivalent version of the decomposition.

\begin{prop}
Equation \eqref{e:sliceproduct_CSS}, equivalently \eqref{e:sliceproduct_CSS2}, is equivalent to
\[
f_I * g_I(z) = \sum_{C \subseteq \{2,\ldots,n\}} H_C(z) \, I_C
\]
where
\[
H_C(z) = \sum_{\substack{A, B \subseteq \{2,\ldots,n\} \\ A \triangle B = C}} (-1)^{\sigma(A,B)} F_A(z) \tilde{G}_B(z)
\]
with
\[
\sigma(A, B) = |A \cap B| + |\{(a,b) \in A \times B : a > b\}|
\]
and
\[
\tilde{G}_B(z) :=
\begin{cases}
G_B(z) & \text{if } |A| \text{ is even}, \\
\hat{G}_B(z) = \overline{G_B(\bar{z})} & \text{if } |A| \text{ is odd}.
\end{cases}
\]
\end{prop}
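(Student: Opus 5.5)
The plan is to start from \eqref{e:sliceproduct_CSS2} and rewrite every term in the form (coefficient)$\cdot I_C$ for a single standard basis element $I_C$. Then I will group the terms by $C$.

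\emph{Off-diagonal terms ($A\neq B$).} Lemma~\ref{lem:clifford} gives $I_AI_B=(-1)^{\sigma(A,B)}I_{A\triangle B}$. So each summand $F_A\tilde G_B I_AI_B$ becomes $(-1)^{\sigma(A,B)}F_A\tilde G_B\, I_{A\triangle B}$. Here $\tilde G_B=G_B$ or $\hat G_B$ according to the parity of $|A|$, which matches exactly the split of the last two sums in \eqref{e:sliceproduct_CSS2} into $|A|$ even and $|A|$ odd. Since $A\neq B$ if and only if $A\triangle B\neq\emptyset$, these terms contribute, for each nonempty $C$, precisely the sum over pairs with $A\triangle B=C$.

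\emph{Diagonal terms ($A=B$).} These carry the explicit signs $(-1)^{|A|/2}$ or $(-1)^{(|A|+1)/2}$. The computation just given shows $\sigma(A,A)=|A|(|A|+1)/2$, and Lemma~\ref{l:trivialcounting} shows these signs equal $(-1)^{\sigma(A,A)}$. They therefore form exactly $H_\emptyset$, with $I_\emptyset=1$, and the conjugation rule for $\tilde G_A$ is again dictated by the parity of $|A|$.

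It remains to reindex. The map $(A,B)\mapsto (A, A\triangle B)$ is a bijection on pairs of subsets of $\{2,\dots,n\}$, so summing over all $(A,B)$ is the same as summing over $C$ and then over the pairs with $A\triangle B=C$. No term is lost or double counted, which gives \eqref{eq:newproduct} with coefficients \eqref{eq:newcoefficients} and the unified sign. The converse direction is the same computation read backwards, which is why the statement is an equivalence.

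The only step that needs genuine care is the consistency of the diagonal signs, which is already settled by Lemma~\ref{l:trivialcounting}. A second point to check is that the placement of $\tilde G_B$ to the left of $I_AI_B$ in \eqref{e:sliceproduct_CSS2} is already the correct commuted form, as justified by Lemma~\ref{lem:conjugation}. The real scalar $(-1)^{\sigma}$ commutes with everything, so pulling it out creates no ordering issue.
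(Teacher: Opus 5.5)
Your proposal is correct and follows the paper's proof. You split \eqref{e:sliceproduct_CSS2} into diagonal and off-diagonal sums, rewrite $I_AI_B=(-1)^{\sigma(A,B)}I_{A\triangle B}$ by Lemma~\ref{lem:clifford}, match the explicit diagonal signs with $(-1)^{\sigma(A,A)}$ via Lemma~\ref{l:trivialcounting}, and collect terms by $C=A\triangle B$; your bijection $(A,B)\mapsto(A,A\triangle B)$ just makes that regrouping explicit. The check via Lemma~\ref{lem:conjugation} is not needed, because the hats are already in place in \eqref{e:sliceproduct_CSS2}, which serves as the definition of the $\ast$-product (and that lemma only holds at the level of the $\ast$-product, since pointwise $I_A\,G(z)=\overline{G(z)}\,I_A$ for $|A|$ odd).
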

This formula holds for \textbf{all} pairs $(A, B)$, including diagonal terms $A = B$.

\begin{proof}[Proof of Proposition \ref{prop:productformula}]
We analyze each of the four sums in equation \eqref{e:sliceproduct_CSS}, equivalently \eqref{e:sliceproduct_CSS2} separately.  We recall the formula for \eqref{e:sliceproduct_CSS2} since it plays a role in what comes next:

\begin{align*}
f_I * g_I(z) &= \sum_{|A| \text{ even}} (-1)^{\frac{|A|}{2}} F_A(z) G_A(z) + \sum_{|A| \text{ odd}} (-1)^{\frac{|A|+1}{2}} F_A(z) \hat{G}_A(z) \notag \\
&\quad + \sum_{\substack{|A| \text{ even} \\ B \neq A}} F_A(z) G_B(z) I_A I_B + \sum_{\substack{|A| \text{ odd} \\ B \neq A}} F_A(z) \hat{G}_B(z) I_A I_B. \notag
\end{align*}

\textbf{Sum 1: Diagonal terms with $|A|$ even.}
\[
\sum_{|A| \text{ even}} (-1)^{\frac{|A|}{2}} F_A(z) G_A(z)
\]
These are scalar terms (no $I_A I_B$ factor since it equals $I_\emptyset = 1$ when $A = B$). They contribute to $\mathsf{H}_\emptyset$ (i.e., $C=A\triangle B = \emptyset$) with:
\begin{itemize}
    \item $A = B$, so $A \triangle B = \emptyset$
    \item Sign: $(-1)^{\frac{|A|}{2}}$
    \item No hat since $|A|$ even.
\end{itemize}

\textbf{Sum 2: Diagonal terms with $|A|$ odd.}
\[
\sum_{|A| \text{ odd}} (-1)^{\frac{|A|+1}{2}} F_A(z) \hat{G}_A(z)
\]
Again, $A = B$ so these contribute to $H_\emptyset$ with:
\begin{itemize}
    \item $A = B$, so $A \triangle B = \emptyset$
    \item Sign: $(-1)^{\frac{(|A|+1)}{2}}$
    \item Hat appears since $|A|$ odd.
\end{itemize}

\textbf{Sum 3: Off-diagonal terms with $|A|$ even.}
\[
\sum_{\substack{|A| \text{ even} \\ B \neq A}} F_A(z) G_B(z) I_A I_B
\]
By Lemma \ref{lem:clifford}, $I_A I_B = (-1)^{\sigma(A,B)} I_{A \triangle B}$.  So this contributes to $H_C$ where $C = A \triangle B$ with:
\begin{itemize}
    \item Sign: $(-1)^{\sigma(A,B)}$ (from Clifford reduction)
    \item No hat since $|A|$ even
\end{itemize}

\textbf{Sum 4: Off-diagonal terms with $|A|$ odd.}
\[
\sum_{\substack{|A| \text{ odd} \\ B \neq A}} F_A(z) \hat{G}_B(z) I_A I_B
\]
Again by Lemma \ref{lem:clifford}, $I_A I_B = (-1)^{\sigma(A,B)} I_{A \triangle B}$.  This contributes to $H_C$ where $C = A \triangle B$ with:
\begin{itemize}
    \item Sign: $(-1)^{\sigma(A,B)}$ (from Clifford reduction)
    \item Hat appears since $|A|$ odd.
\end{itemize}

We conclude the proof by collecting terms.  Now we collect all contributions by the output basis element $I_C$:
\[
f_I * g_I(z) = \sum_{C \subseteq \{2,\ldots,n\}} \left( \sum_{\substack{A, B \\ A \triangle B = C}} \mathrm{sign}(A,B) \cdot F_A(z) \cdot \tilde{G}_B(z) \right) I_C = \sum_{C \subseteq \{2,\ldots,n\}} H_C(z) \, I_C
\]
where the sign and conjugation rules are exactly as stated.
\end{proof}


\subsection{Alternative Parametrization}

We present another way to describe the product.  The idea now is to describe all pairs $(A, B)$ of subsets of $\{2, \ldots, n\}$ such that $A \triangle B = C$ for a fixed $C$.

\begin{prop}
The pairs $(A, B)$ satisfying $A \triangle B = C$ are in bijection with pairs $(S, D)$ where $S \subseteq C$ and $D \subseteq C^c := \{2, \ldots, n\} \setminus C$. The bijection is given by:
\[
A = S \cup D, \qquad B = (C \setminus S) \cup D.
\]
\end{prop}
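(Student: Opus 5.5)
The plan is to exhibit explicit maps in both directions and check that they are mutually inverse. Fix $C\subseteq\{2,\ldots,n\}$ and write $C^c=\{2,\ldots,n\}\setminus C$. Define
\[
\Phi(S,D) := \bigl(S\cup D,\ (C\setminus S)\cup D\bigr),\qquad S\subseteq C,\ D\subseteq C^c ,
\]
and
\[
\Psi(A,B) := \bigl(A\cap C,\ A\cap B\bigr)\qquad\text{for } A\triangle B=C .
\]
Throughout I will use only elementary set identities together with the disjointness $C\cap C^c=\emptyset$.

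First I show that $\Phi$ lands in the right set. For $A=S\cup D$ and $B=(C\setminus S)\cup D$, the sets $S$, $C\setminus S$ and $D$ are pairwise disjoint, because $S$ and $C\setminus S$ lie in $C$ while $D$ lies in $C^c$. Hence $A\cap B=D$ and $A\cup B=C\cup D$, so $A\triangle B=(C\cup D)\setminus D=C$, again using $C\cap D=\emptyset$.

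Next I show that $\Psi$ is well defined. Let $A\triangle B=C$. Every element of $A\cap B$ lies outside $A\triangle B=C$, so $A\cap B\subseteq C^c$. Trivially $A\cap C\subseteq C$. So $\Psi(A,B)$ is an admissible pair $(S,D)$.

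Then I check both compositions. For $\Psi\circ\Phi$: with $A=S\cup D$ we get $A\cap C=S$, since $D\cap C=\emptyset$, and we computed above that $A\cap B=D$. For $\Phi\circ\Psi$: given $(A,B)$ with $A\triangle B=C$, set $S=A\cap C$ and $D=A\cap B$. Then $A$ is the disjoint union of $A\setminus B=A\cap C$ and $A\cap B$, so $S\cup D=A$. Likewise $B$ is the disjoint union of $B\setminus A$ and $A\cap B$. Since $C$ is the disjoint union of $A\setminus B=S$ and $B\setminus A$, we have $B\setminus A=C\setminus S$, and therefore $(C\setminus S)\cup D=B$.

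The only point that needs any care is the identification $A\setminus B=A\cap C$ and $B\setminus A=C\setminus(A\cap C)$, which uses that $C$ is exactly the disjoint union of $A\setminus B$ and $B\setminus A$. Once that is written down, everything else is routine. As a byproduct, the number of pairs $(A,B)$ with $A\triangle B=C$ is $2^{|C|}\cdot 2^{\,n-1-|C|}=2^{n-1}$, independent of $C$. This is consistent with each $H_C$ in \eqref{eq:newcoefficients} being a sum of $2^{n-1}$ terms.
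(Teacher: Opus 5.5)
Your proof is correct and follows essentially the same route as the paper. The paper also sets $S=A\setminus B=A\cap C$ and $D=A\cap B$, reconstructs $A=S\cup D$ and $B=(C\setminus S)\cup D$, and verifies that the reverse assignment gives $A\triangle B=C$ with $A\cap B=D$; your explicit maps $\Phi,\Psi$ with both compositions checked are a slightly tidier packaging of that argument, and your count $2^{n-1}$ matches the paper's Step~6.
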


\begin{proof}
Recall that the symmetric difference is $A \triangle B = (A \cup B) \setminus (A \cap B)$, which consists of elements in exactly one of $A$ or $B$.

Step 1: Decomposing $A$ and $B$.  Given any pair $(A, B)$, we can partition the elements into three disjoint regions:
\begin{itemize}
    \item $A \cap B$ = elements in both $A$ and $B$
    \item $A \setminus B$ = elements in $A$ only
    \item $B \setminus A$ = elements in $B$ only
\end{itemize}
The symmetric difference is $A \triangle B = (A \setminus B) \cup (B \setminus A)$.

Step 2: What does $A \triangle B = C$ tell us?  If $A \triangle B = C$, then:
\begin{itemize}
    \item $(A \setminus B) \cup (B \setminus A) = C$
    \item $(A \setminus B)$ and $(B \setminus A)$ are disjoint and partition $C$
    \item $A \cap B$ is disjoint from $C$, so $A \cap B \subseteq C^c$
\end{itemize}

Step 3: Defining $S$ and $D$.  Let us define:
\begin{align*}
S &:= A \setminus B = A \cap C \quad \text{(the part of $C$ that comes from $A$)} \\
D &:= A \cap B \quad \text{(the common part)}.
\end{align*}

Then:
\begin{itemize}
    \item $S \subseteq C$ (since $A \setminus B \subseteq A \triangle B = C$)
    \item $D \subseteq C^c$ (since $A \cap B$ is disjoint from $A \triangle B = C$)
    \item $B \setminus A = C \setminus S$ (since $S$ and $B \setminus A$ partition $C$).
\end{itemize}

Step 4: Reconstructing $A$ and $B$ from $S$ and $D$.  From the definitions:
\begin{align*}
A &= (A \setminus B) \cup (A \cap B) = S \cup D \\
B &= (B \setminus A) \cup (A \cap B) = (C \setminus S) \cup D.
\end{align*}

Step 5: Verifying the bijection.  \emph{Forward direction:} Given $(A, B)$ with $A \triangle B = C$, we obtain $(S, D)$ with $S \subseteq C$ and $D \subseteq C^c$.

\emph{Reverse direction:} Given $(S, D)$ with $S \subseteq C$ and $D \subseteq C^c$, define $A = S \cup D$ and $B = (C \setminus S) \cup D$. We verify:
\begin{align*}
A \triangle B &= (A \setminus B) \cup (B \setminus A) \\
&= (S \cup D) \setminus ((C \setminus S) \cup D) \cup ((C \setminus S) \cup D) \setminus (S \cup D) \\
&= S \cup (C \setminus S) \quad \text{(since $D$ cancels out)} \\
&= C. \quad
\end{align*}
More explicitly:
\begin{itemize}
    \item $A \setminus B = (S \cup D) \setminus ((C \setminus S) \cup D) = S \setminus (C \setminus S) = S$ (since $S$ and $C \setminus S$ are disjoint)
    \item $B \setminus A = ((C \setminus S) \cup D) \setminus (S \cup D) = (C \setminus S) \setminus S = C \setminus S$
    \item Therefore $A \triangle B = S \cup (C \setminus S) = C$.
\end{itemize}
Also verify: $A \cap B = (S \cup D) \cap ((C \setminus S) \cup D) = D$ (since $S \cap (C \setminus S) = \emptyset$).

Step 6: Counting.  The number of pairs $(S, D)$ is $|2^C| \cdot |2^{C^c}| = 2^{|C|} \cdot 2^{n-1-|C|} = 2^{n-1}$.

This matches the number of pairs $(A, B)$ with $A \triangle B = C$: for each element $i \in C$, it must be in exactly one of $A$ or $B$ (2 choices), and for each element $j \in C^c$, it is either in both or neither (2 choices). Total: $2^{|C|} \cdot 2^{|C^c|} = 2^{n-1}$.
\end{proof}

Using this parametrization, we can rewrite:
\[
H_C(z) = \sum_{\substack{A, B \\ A \triangle B = C}} (-1)^{\sigma(A,B)} F_A(z) \tilde{G}_B(z)
= \sum_{D \subseteq C^c} \sum_{S \subseteq C} (-1)^{\sigma(S \cup D, \, (C \setminus S) \cup D)} F_{S \cup D}(z) \, \tilde{G}_{(C \setminus S) \cup D}(z)
\]
where:
\begin{itemize}
    \item $A = S \cup D$
    \item $B = (C \setminus S) \cup D$
    \item $|A| = |S| + |D|$ (since $S \subseteq C$ and $D \subseteq C^c$ are disjoint)
    \item $\tilde{G}_B = G_B$ if $|A| = |S| + |D|$ is even, and $\tilde{G}_B = \hat{G}_B$ if $|A|$ is odd
\end{itemize}


\section{Matrix formulation of the slice product and auxiliary equations}
\label{subsec:matrix-clifford-aux}

We now explain how the matrix formalism for the slice product yields the auxiliary Bezout equations in the Clifford setting, exactly as in the quaternionic case.

Let
$$
\mathcal I:=\mathcal P(\{2,\dots,n\}),
\qquad
|\mathcal I|=2^{n-1}.
$$
Write
$$
f_I(z)=\sum_{A\in\mathcal I}F_A(z)\,I_A,
\qquad
h_I(z)=\sum_{B\in\mathcal I}H_B(z)\,I_B.
$$
As seen in the last section we have that
$$
f_I*h_I(z)=\sum_{C\in\mathcal I}K_C(z)\,I_C,
$$
where
\begin{equation}
\label{eq:product-matrix-section}
K_C(z)=\sum_{\substack{A,B\in\mathcal I\\ A\triangle B=C}}
(-1)^{\sigma(A,B)}\,F_A(z)\,\widetilde H_B(z),
\end{equation}
with
$$
\widetilde H_B(z)=
\begin{cases}
H_B(z), & \text{if } |A| \text{ is even},\\
\hat{H}_B(z), & \text{if } |A| \text{ is odd}.
\end{cases}
$$
From this we can understand the product $f_I\ast h_I$ as a matrix multiplication.

Consider the column vector for an element $H$ defined as
$$
\mathbf h=\bigl((\mathbf h)_B\bigr)_{B\in\mathcal I},
\qquad
(\mathbf h)_B=
\begin{cases}
H_B, & \text{if } |B| \text{ is even},\\
\hat{H}_B, & \text{if } |B| \text{ is odd},
\end{cases}
$$
and, for $\displaystyle k_I=f_I*h_I=\sum_{C\in\mathcal I}K_C I_C$, define
$$
\mathbf k=\bigl((\mathbf k)_C\bigr)_{C\in\mathcal I},
\qquad
(\mathbf k)_C=
\begin{cases}
K_C, & \text{if } |C| \text{ is even},\\
\hat{K}_C, & \text{if } |C| \text{ is odd}.
\end{cases}
$$

It is possible to see the multiplication $\ast$ in terms of matrix multiplication.  Let $\displaystyle f_I = \sum_{A\in\mathcal{I}} F_A I_A$ be a slice monogenic function and then define the relevant matrix $M_f$ associated to $f_I$ is defined by
\begin{equation}
\label{eq:Mf-matrix-section}
(M_f)_{A,B}
=
(-1)^{\sigma(A\triangle B,B)}
\begin{cases}
F_{A\triangle B}, & \text{if } |A| \text{ is even},\\
\hat{F}_{A\triangle B}, & \text{if } |A| \text{ is odd}.
\end{cases}
\end{equation}
The fundamental point is that $M_f$ linearizes the slice product.

\begin{prop}\label{prop:matrix_equiv}
The equation $f * h = 1$ (the $N=1$ Corona equation) is equivalent to the matrix equation
\[
M_f \cdot M_h = \textnormal{Id}_{2^{n-1}},
\]
where $M_h$ is defined by the same formula \eqref{eq:Mf-matrix-section} with $F_A$ replaced by $H_A$.
\end{prop}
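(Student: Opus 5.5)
The plan is to prove a slightly stronger statement: $A\mapsto M_f$ is multiplicative,
\[
M_{f*h}=M_f\,M_h .
\]
Together with two small observations, this gives the equivalence. The first observation is that $M_1=\textnormal{Id}_{2^{n-1}}$. The second is that a slice function is recovered from the $\emptyset$-column of its matrix. Everything rests on the explicit coefficient formula \eqref{eq:product-matrix-section} (Proposition~\ref{prop:productformula}), Lemma~\ref{lem:clifford} and Lemma~\ref{lem:conjugation}.

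\emph{Step 1 (the vector identity).} I will show that $M_f\,\mathbf h=\mathbf k$, where $k_I=f_I*h_I$. The $C$-th entry of $M_f\mathbf h$ is $\sum_B (M_f)_{C,B}(\mathbf h)_B$. Put $A=C\triangle B$; this is a bijection $B\leftrightarrow A$ with $A\triangle B=C$, and the sign $(-1)^{\sigma(A,B)}$ matches \eqref{eq:product-matrix-section} exactly. The only real check is the hats, using $|A|\equiv|B|+|C|\pmod 2$.
\begin{itemize}
\item If $|C|$ is even, the entry is $F_A$ times $H_B$ or $\hat H_B$ according to the parity of $|B|$, which is the parity of $|A|$. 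This is precisely $F_A\widetilde H_B$, so the sum equals $K_C=(\mathbf k)_C$.
\item If $|C|$ is odd, the entry is $\hat F_A$ times $H_B$ or $\hat H_B$ according to $|B|$, with $|B|$ odd exactly when $|A|$ is even. This is $\widehat{F_A\widetilde H_B}$, since $\hat{\cdot}$ is an involution and multiplicative. Hence the sum is $\hat K_C=(\mathbf k)_C$.
\end{itemize}

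\emph{Step 2 (columns of $M_g$).} I will check that the $B$-th column of $M_g$ is the vector $\mathbf{v}$ of the slice function $g_I\,I_B$, meaning right multiplication by the constant $I_B$. This product is slice monogenic and coincides with $g*I_B$. By Lemma~\ref{lem:clifford},
\[
g_I I_B=\sum_A G_A I_AI_B=\sum_C (-1)^{\sigma(C\triangle B,B)}G_{C\triangle B}I_C .
\]
Applying the parity convention defining $\mathbf v$ then reproduces \eqref{eq:Mf-matrix-section} entrywise. In particular the $\emptyset$-column of $M_g$ is $\mathbf g$ itself, since $\sigma(A,\emptyset)=0$.

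\emph{Step 3 (multiplicativity).} By associativity of the $*$-product, and since right multiplication by a real-coefficient constant commutes with $*$ on the left, we have $f*(h I_B)=(f*h)I_B$. Combining Steps 1 and 2 column by column gives $M_f M_h=M_{f*h}$. For the constant function $1$ we have $F_\emptyset=1$ and all other components zero, so $(M_1)_{A,B}=\delta_{AB}$ because $\sigma(\emptyset,B)=0$; thus $M_1=\textnormal{Id}$.

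\emph{Step 4 (conclusion).} If $f*h=1$, then $M_fM_h=M_{f*h}=M_1=\textnormal{Id}$. Conversely, if $M_fM_h=\textnormal{Id}$, compare $\emptyset$-columns. Step 1 shows that this column equals $\mathbf k$, so $\mathbf k=e_\emptyset$, i.e.\ $K_\emptyset=1$ and $K_C=0$ for $C\neq\emptyset$. Hence $f_I*h_I=1$ on the slice. The Identity Principle (Theorem~\ref{thm:identity-principle}) then extends this to $f*h=1$ on $U$.

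I expect the main obstacle to be the hat and sign bookkeeping in Steps 1–2. The consistency between the $|C|$-based hats in \eqref{eq:Mf-matrix-section} and the $|A|$-based hats in \eqref{eq:product-matrix-section} hinges on the parity relation above. One must also confirm that $\sigma(C\triangle B,B)$ is the exponent produced by Lemma~\ref{lem:clifford} for $I_{C\triangle B}I_B$, which is immediate from the lemma's statement.
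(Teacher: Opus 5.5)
Your proof is correct. Step 1 is exactly the paper's proof: reindex by $A=C\triangle B$ and match the Clifford sign and the hats against \eqref{eq:product-matrix-section}. You actually carry out the hat bookkeeping more carefully than the paper does. You use $|A|\equiv|B|+|C|\pmod 2$ and note that odd $|C|$ produces $\hat K_C$, whereas the paper only says the sum ``reproduces'' the coefficient $K_A$.

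Where you genuinely go beyond the paper is Steps 2--3. The paper stops at $M_f\mathbf h=\mathbf k$ and asserts that $\mathbf k=e_\emptyset$ is equivalent to $M_fM_h=\textnormal{Id}_{2^{n-1}}$. But $M_f\mathbf h$ is only the $\emptyset$-column of $M_fM_h$. That suffices for $M_fM_h=\textnormal{Id}\Rightarrow f*h=1$. The converse $f*h=1\Rightarrow M_fM_h=\textnormal{Id}$ needs the other $2^{n-1}-1$ columns, which the paper's proof never addresses; the homomorphism $M_{f*h}=M_fM_h$ is only asserted elsewhere, e.g.\ \eqref{eq:matrix_product_clifford}. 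You identify the $B$-th column of $M_g$ with the coefficient vector of $g_I I_B$, and combine this with $f*(hI_B)=(f*h)I_B$. Together these prove that homomorphism and close the gap, so your version is strictly more complete.

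For the identity $f*(hI_B)=(f*h)I_B$ you do not even need associativity of the $*$-product. By Lemma~\ref{lem:conjugation}, formula \eqref{e:sliceproduct_CSS2} is simply the pointwise expression $f_I*g_I(z)=\sum_A F_A(z)\,I_A\,g_I(z)$. From this, $f_I*(g_I\,c)=(f_I*g_I)\,c$ for every constant $c\in\mathbb{R}_n$ is immediate.

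A minor typo: ``$A\mapsto M_f$'' in your opening line should read $f\mapsto M_f$.
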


\begin{proof}
It suffices to verify that $M_f \mathbf{h} = \mathbf{k}$ where $\mathbf{h}$ and $\mathbf{k}$
are the column vectors of $h$ and $k = f * h$ defined by the convention above.

Fix a row index $A$. The $A$-th entry of $M_f \mathbf{h}$ is
\[
(M_f \mathbf{h})_A = \sum_{B \subseteq \{2,\ldots,n\}} (M_f)_{A,B} \cdot (\mathbf{h})_B .
\]
Substituting the definition of $(M_f)_{A,B}$ gives
\[
(M_f \mathbf{h})_A
=
\sum_B
(-1)^{\sigma(A \triangle B,B)}
\begin{cases}
F_{A\triangle B}, & |A| \text{ even},\\
\hat{F}_{A\triangle B}, & |A| \text{ odd}
\end{cases}
\cdot
\begin{cases}
H_B, & |B| \text{ even},\\
\hat{H}_B, & |B| \text{ odd}.
\end{cases}
\]

Setting $A' = A \triangle B$, the sign
$(-1)^{\sigma(A',B)}$ is exactly the Clifford sign appearing in the product
\[
I_{A'} I_B = (-1)^{\sigma(A',B)} I_A .
\]
Thus the sum reproduces the coefficient formula for $K_A$ in the slice product
\eqref{eq:product-matrix-section}. Consequently,
\[
M_f \mathbf{h} = \mathbf{k}.
\]
In particular, the equation $f*h = 1$ corresponds to
$\mathbf{k} = e_\emptyset$, which is equivalent to
\[
M_f M_h = \textnormal{Id}_{2^{n-1}}.
\]
\end{proof}

The same point of view extends immediately to several generators.  If
$$
f_1*h_1+\cdots+f_N*h_N=1,
$$
then the matrix formulation becomes
\begin{equation}
\label{eq:Corona-matrix-many-generators}
M_{f_1}M_{h_1}+\cdots+M_{f_N}M_{h_N}=\textnormal{Id}_{2^{n-1}}.
\end{equation}
Accordingly, the auxiliary Bezout equations in the multi-generator setting should arise from the determinant of the matrix on the left-hand side of \eqref{eq:Corona-matrix-many-generators}.

\section{\texorpdfstring{The Clifford Algebra Case $\mathbb{R}_3$}{The Clifford Algebra Case R3}}

For $j=1,\dots,N$ write
\[
f_j
=
F_{1}^{(j)}
+
F_{2}^{(j)} I_2
+
F_{3}^{(j)} I_3
+
F_{23}^{(j)} I_{23},
\qquad
F_{\alpha}^{(j)}\in H^\infty(\DD).
\]

Define the $4\times4$ matrix associated with $f_j$ by
\begin{equation}\label{eq:Mf_clifford_general}
M_{f_j}(z)=
\begin{pmatrix}
F_{1}^{(j)}(z) & -F_{2}^{(j)}(z) & -F_{3}^{(j)}(z) & -F_{23}^{(j)}(z) \\
\hat{F}_{2}^{(j)}(z) & \hat{F}_{1}^{(j)}(z) & -\hat{F}_{23}^{(j)}(z) & \hat{F}_{3}^{(j)}(z) \\
\hat{F}_{3}^{(j)}(z) & \hat{F}_{23}^{(j)}(z) & \hat{F}_{1}^{(j)}(z) & -\hat{F}_{2}^{(j)}(z) \\
F_{23}^{(j)}(z) & -F_{3}^{(j)}(z) & F_{2}^{(j)}(z) & F_{1}^{(j)}(z)
\end{pmatrix}.
\end{equation}
Similarly, if
\[
h_j
=
H_{1}^{(j)}
+
H_{2}^{(j)} I_2
+
H_{3}^{(j)} I_3
+
H_{23}^{(j)} I_{23},
\]
define
\begin{equation}\label{eq:Mg_clifford_general}
M_{h_j}(z)=
\begin{pmatrix}
H_{1}^{(j)}(z) & -H_{2}^{(j)}(z) & -H_{3}^{(j)}(z) & -H_{23}^{(j)}(z) \\
\hat{H}_{2}^{(j)}(z) & \hat{H}_{1}^{(j)}(z) & -\hat{H}_{23}^{(j)}(z) & \hat{H}_{3}^{(j)}(z) \\
\hat{H}_{3}^{(j)}(z) & \hat{H}_{23}^{(j)}(z) & \hat{H}_{1}^{(j)}(z) & -\hat{H}_{2}^{(j)}(z) \\
H_{23}^{(j)}(z) & -H_{3}^{(j)}(z) & H_{2}^{(j)}(z) & H_{1}^{(j)}(z)
\end{pmatrix}.
\end{equation}
These matrices encode left Clifford multiplication.  In particular we have \begin{equation}\label{eq:matrix_product_clifford}
M_{f_j}(z)M_{h_j}(z)=M_{\,f_j*h_j}(z).
\end{equation}

\medskip

We now combine the generators into block matrices.  Define
\begin{equation}\label{eq:blockF_clifford}
\mathsf{F}(z)
:=
\bigl(
M_{f_1}(z)\ \cdots\ M_{f_N}(z)
\bigr)
\in H^\infty(\DD)^{4\times 4N},
\end{equation}
and
\begin{equation}\label{eq:blockH_clifford}
\mathsf{H}(z)
:=
\begin{pmatrix}
M_{h_1}(z)\\
\vdots\\
M_{h_N}(z)
\end{pmatrix}
\in H^\infty(\DD)^{4N\times 4}.
\end{equation}
A direct computation using \eqref{eq:matrix_product_clifford} shows
\begin{equation}\label{eq:block_identity_clifford}
\mathsf{F}(z) \mathsf{H}(z)
=
M_{f_1}(z)M_{h_1}(z)
+\cdots+
M_{f_N}(z)M_{h_N}(z).
\end{equation}
Consequently,
\begin{equation}\label{eq:clifford_bezout_matrix}
f_1*h_1(q)+\cdots+f_N*h_N(q)=1
\quad\Longleftrightarrow\quad
\mathsf{F}(z) \mathsf{H}(z)=\textnormal{Id}_4 \quad\Longleftrightarrow\quad \mathsf{H}^{\top}(z) \mathsf{F}^{\top}(z)=\textnormal{Id}_4.
\end{equation}
Thus the Clifford Bezout equation can be expressed as a single matrix identity involving the block matrices $\mathsf{F}$ and $\mathsf{H}$.

Alternatively, by looking at the entries of the matrix, we can expand these as a system of equations:
\begin{align}
\sum_{j=1}^{N} \Big[ F_1^{(j)} H_1^{(j)} - F_2^{(j)} \hat H_2^{(j)} - F_3^{(j)} \hat H_3^{(j)} - F_{23}^{(j)} H_{23}^{(j)} \Big] &= 1 \\
\sum_{j=1}^{N} \Big[ F_1^{(j)} H_2^{(j)} + F_2^{(j)} \hat H_1^{(j)} + F_3^{(j)} \hat H_{23}^{(j)} - F_{23}^{(j)} H_3^{(j)} \Big] &= 0 \\
\sum_{j=1}^{N} \Big[ F_1^{(j)} H_3^{(j)} - F_2^{(j)} \hat H_{23}^{(j)} + F_3^{(j)} \hat H_1^{(j)} + F_{23}^{(j)} H_2^{(j)} \Big] &= 0 \\
\sum_{j=1}^{N} \Big[ F_1^{(j)} H_{23}^{(j)} + F_2^{(j)} \hat H_3^{(j)} - F_3^{(j)} \hat H_2^{(j)} + F_{23}^{(j)} H_1^{(j)} \Big] &= 0.
\end{align}

\subsection{Solving the Clifford Bezout equation via the matrix Corona theorem}

Assume that the data $\{f_j\}_{j=1}^N$ satisfy a Corona lower bound strong enough so that the matrix Corona theorem applies to the block matrix $\mathsf{F}$ defined in \eqref{eq:blockF_clifford}.  Then there exists
\[
\mathsf{P}(z)\in H^\infty(\DD)^{4N\times 4}
\]
such that
\begin{equation}\label{eq:right_inverse_P}
\mathsf{F}(z) \mathsf{P}(z) = \textnormal{Id}_4\quad\Longleftrightarrow\quad \mathsf{P}^{\top}(z) \mathsf{F}^{\top}(z) = \textnormal{Id}_4.
\end{equation}

The matrix $\mathsf{P}$ obtained from the Corona theorem does not necessarily have the block structure \eqref{eq:blockH_clifford}.  To recover this structure we use the Clifford symmetry matrices corresponding to the generators $I_2$ and $I_3$.

Define the constant $4\times4$ matrices
\begin{equation}\label{eq:J4_clifford}
J_4^{(2)}=
\begin{pmatrix}
0 & 1 & 0 & 0\\
-1 & 0 & 0 & 0\\
0 & 0 & 0 & -1\\
0 & 0 & 1 & 0
\end{pmatrix},
\qquad
J_4^{(3)}=
\begin{pmatrix}
0 & 0 & 1 & 0\\
0 & 0 & 0 & 1\\
-1 & 0 & 0 & 0\\
0 & -1 & 0 & 0
\end{pmatrix}.
\end{equation}
These are the matrices that represent right multiplication by the imaginary units $I_2$ and $I_3$ for elements $x\in\mathbb{R}_3$.  Note that these matrices anti-commute, $J_4^{(2)}J_4^{(3)}=-J_4^{(3)}J_4^{(2)}$.

For the block matrices define
\[
J_{4N}^{(2)}:=\operatorname{diag}(J_4^{(2)},\dots,J_4^{(2)}),
\qquad
J_{4N}^{(3)}:=\operatorname{diag}(J_4^{(3)},\dots,J_4^{(3)}).
\]
A direct computation shows that each matrix $M_{f_j}$ satisfies
\begin{equation}\label{eq:symmetry_Mf}
\widehat{M_{f_j}}
=
J_4^{(2)} M_{f_j} (J_4^{(2)})^{-1}
=
J_4^{(3)} M_{f_j} (J_4^{(3)})^{-1}.
\end{equation}
Consequently the block matrix $\mathsf{F}$ satisfies
\begin{equation}\label{eq:symmetry_blockF}
\widehat{\mathsf{F}}
=
J_4^{(2)}  \mathsf{F} (J_{4N}^{(2)})^{-1}
=
J_4^{(3)} \mathsf{F} (J_{4N}^{(3)})^{-1}.
\end{equation}

Using \eqref{eq:right_inverse_P} and \eqref{eq:symmetry_blockF} we obtain that the matrices
\[
\mathcal T_2(\mathsf{P}):=(J_{4N}^{(2)})^{-1}\widehat{\mathsf{P}}\,J_4^{(2)},
\qquad
\mathcal T_3(\mathsf{P}):=(J_{4N}^{(3)})^{-1}\widehat{\mathsf{P}}\,J_4^{(3)}
\]
also satisfy
\[
\mathsf{F}\,\mathcal T_2(\mathsf{P})=\textnormal{Id}_4,
\qquad
\mathsf{F}\,\mathcal T_3(\mathsf{P})=\textnormal{Id}_4 .
\]
We now symmetrize in an analogous fashion, as in the quaternionic case.  Define symmetrizations relative to the imaginary units:
\begin{align*}
\mathsf{P}^{(2)}:=\frac12\bigl(\mathsf{P}+\mathcal T_2(\mathsf{P})\bigr),\\
\mathsf{P}^{(3)}:=\frac12\bigl(\mathsf{P}+\mathcal T_3(\mathsf{P})\bigr),\\
\end{align*}
Then define:
\begin{align*}
\mathsf{H} & :=\left(\mathsf{P}^{(2)}\right)^{(3)} =\frac12\bigl(\mathsf{P}^{(2)}+\mathcal T_3(\mathsf{P}^{(2)})\bigr)\\
& =\frac{1}{4}\left(\mathsf{P}+\mathcal{T}_2(\mathsf{P})+\mathcal{T}_3(\mathsf{P})+
\mathcal{T}_3\mathcal{T}_2(\mathsf{P})\right)\\
& =\frac{1}{4}\left(\mathsf{P}+\mathcal{T}_2(\mathsf{P})+\mathcal{T}_3(\mathsf{P})+\mathcal{T}_2\mathcal{T}_3(\mathsf{P})\right)\\
& =\frac{1}{4}\left(\mathsf{P}+(J_{4N}^{(2)})^{-1}\widehat{\mathsf{P}}\,J_4^{(2)}+(J_{4N}^{(3)})^{-1}\widehat{\mathsf{P}}\,J_4^{(3)}+(J_{4N}^{(3)})^{-1}(J_{4N}^{(2)})^{-1} \mathsf{P}\,J_4^{(2)}\,J_4^{(3)}\right)\\
& =\frac{1}{4}\left(\mathsf{P}+(J_{4N}^{(2)})^{-1}\widehat{\mathsf{P}}\,J_4^{(2)}+(J_{4N}^{(3)})^{-1}
\widehat{\mathsf{P}}\,J_4^{(3)}+(J_{4N}^{(2)})^{-1}(J_{4N}^{(3)})^{-1} \mathsf{P}\,J_4^{(3)}\,J_4^{(2)}\right).
\end{align*}
Then $\mathsf{H}$ still satisfies
\begin{equation}\label{eq:block_solution_identity}
\mathsf{F}(z) \mathsf{H}(z)=\textnormal{Id}_4 .
\end{equation}

Moreover the symmetry conditions imply that $ \mathsf{H}$ has the block form
\[
\mathsf{H}=
\begin{pmatrix}
M_{h_1}\\
\vdots\\
M_{h_N}
\end{pmatrix},
\]
where each block $M_{h_j}$ has the structure \eqref{eq:Mg_clifford_general} for some functions
$H_{1}^{(j)},H_{2}^{(j)},H_{3}^{(j)},H_{23}^{(j)}\in H^\infty(\DD)$.  Substituting this form into \eqref{eq:block_solution_identity} yields
\[
M_{f_1}(z)M_{h_1}(z)+\cdots+M_{f_N}(z)M_{h_N}(z)=\textnormal{Id}_4,
\]
which is equivalent to
\[
f_{I,1}(z)*h_{I,1}(z)+\cdots+f_{I,N}(z)*h_{I,N}(z)=1, \qquad z\in \mathbb{B}\cap\mathbb{C}_I\cong \mathbb{D}.
\]
By the extension theorem (Theorem \ref{thm:extension}), we get the formula

\[
f_{1}*h_{1}+\cdots+f_{N}*h_{N}=1.
\]
The bounds follows from the fact that
$$
\left\Vert f\right\Vert_{H^{\infty}(\mathbb{B})}=\sup_{q\in\mathbb{B}}\vert f(q)\vert=\sup_{I\in\mathbb{S}}\left\Vert f_I\right\Vert_{H^{\infty}(\mathbb{B}\cap\mathbb{C}_I)}.
$$

Thus the Clifford Bezout equation admits bounded holomorphic solutions obtained from the matrix Corona theorem after projecting the Corona solution onto the Clifford-symmetric subspace.

\begin{rem}[Underlying matrix structure]\label{rem:matrix_structure}
A key structural feature shared by the quaternionic and Clifford slice settings is that the corresponding function spaces admit a decomposition into finitely many complex holomorphic components which is stable under the anti-linear involution
\[
f \mapsto \hat f, \qquad \hat f(z)=\overline{f(\bar z)}.
\]

In the quaternionic case every slice function can be written in the form
\[
f(z)=F(z)+G(z)J,
\qquad F,G\in H^\infty(\DD),
\]
while in the Clifford setting one has
\[
f(z)=F_1(z)+F_2(z)I_2+F_3(z)I_3+F_{23}(z)I_{23},
\qquad F_\alpha\in H^\infty(\DD).
\]

Because the slice product interacts with the involution $f\mapsto\hat f$ in a linear way, the multiplication of such functions becomes linear in the coefficient functions.  Consequently each function $f$ can be encoded by a finite matrix with entries in $H^\infty(\DD)$ so that
\[
M_{f*h}=M_f\,M_h .
\]

Thus the Bezout equation in the slice or Clifford setting reduces to a matrix equation for holomorphic functions, allowing the use of the matrix Corona theorem.
\end{rem}

\section{Solving the general Clifford Bezout equation via the matrix Corona theorem}
\label{subsec:Bezout-general}

We now extend the procedure of the previous subsection to slice monogenic functions over $\mathbb{R}_n$ for any $n \ge 2$. The construction parallels the $\mathbb{R}_3$ case but uses $n-1$ commuting involutions, one for each Clifford generator.

\subsection{Setup}

Let $f_{1}, \ldots, f_{N}$ be slice monogenic functions on the unit ball $\mathbb B$ with values in $\mathbb{R}_n$. Consider their restrictions $f_{I,1}, \ldots, f_{I,N}$ to a complex plane $\mathbb C_I$ so that we can associate the respective matrices $M_{f_j}$ of size $2^{n-1}\times 2^{n-1}$ defined by \eqref{eq:Mf-matrix-section}. Form the block matrix
\[
\mathsf{F}(z) := \bigl(M_{f_1}(z)\ \cdots\ M_{f_N}(z)\bigr) \in H^\infty(\mathbb{D})^{2^{n-1}\times N\cdot 2^{n-1}},
\]
where $\mathbb{D}=\mathbb{B}\cap\mathbb{C}_I$.
Assume that the data $\{f_{j}\}_{j=1}^N$, and so their restrictions, satisfy a Corona lower bound strong enough so that the matrix Corona theorem applies to $\mathsf{F}$, yielding a matrix
\[
\mathsf{P}(z) \in H^\infty(\mathbb{D})^{N\cdot 2^{n-1} \times 2^{n-1}}
\]
satisfying
\begin{equation}\label{eq:right_inverse_P_general}
\mathsf{F}(z) \mathsf{P}(z) = \mathrm{Id}_{2^{n-1}} \quad\Longleftrightarrow\quad \mathsf{P}^{\top}(z) \mathsf{F}^{\top}(z) = \mathrm{Id}_{2^{n-1}}.
\end{equation}
As in the lower-dimensional cases, $\mathsf{P}$ need not have the block structure required for the recovery of slice monogenic solutions; we use the Clifford symmetries to project it onto the correct subspace.

\subsection{The Clifford symmetry matrices}

For each $k \in \{2, 3, \ldots, n\}$, define the constant $2^{n-1} \times 2^{n-1}$ matrix
\begin{equation}\label{eq:J_general_def}
\bigl(J^{(k)}\bigr)_{A,B} := \begin{cases}
(-1)^{\sigma(A,\{k\})}, & B = A \triangle \{k\},\\
0, & \text{otherwise,}
\end{cases}
\qquad A, B \in \mathcal I,
\end{equation}
where $\sigma$ is the same sign function appearing in \eqref{eq:Mf-matrix-section}. Equivalently, $J^{(k)}$ represents right multiplication by the imaginary unit $I_k$ on $\mathbb{R}_n$ in the basis $\{I_A\}_{A \in \mathcal I}$.

\begin{lemma}\label{lem:J-properties}
The matrices $J^{(k)}$, $k \in \{2,\ldots,n\}$, satisfy:
\begin{enumerate}
\item $\bigl(J^{(k)}\bigr)^2 = -\mathrm{Id}_{2^{n-1}}$ for every $k$;
\item $J^{(k)} J^{(\ell)} = -J^{(\ell)} J^{(k)}$ for every $k \ne \ell$.
\end{enumerate}
\end{lemma}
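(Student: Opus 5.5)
The plan is to read $J^{(k)}$ as the coordinate matrix of the linear map $R_k\colon x\mapsto xI_k$. Both identities then follow from the Clifford relations \eqref{eq:clifford-relations}. A direct sign count will serve as a cross-check.

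Let $V:=\operatorname{span}_{\mathbb R}\{I_A : A\in\mathcal I\}\subseteq\mathbb R_n$. The $I_A$ are distinct standard basis elements of $\mathbb R_n$, so they are linearly independent. By Lemma~\ref{lem:clifford}, for $k\in\{2,\dots,n\}$ we have $I_AI_k=(-1)^{\sigma(A,\{k\})}I_{A\triangle\{k\}}$, and $A\triangle\{k\}\in\mathcal I$. Hence $V$ is invariant under $R_k$. Writing $x=\sum_A x_AI_A$, the coefficient of $I_B$ in $xI_k$ is
\[
\sum_A x_A\,(J^{(k)})_{A,B}.
\]
Thus, with $\mathbf x=(x_A)_{A\in\mathcal I}$ viewed as a row vector, the row vector $\mathbf x^\top J^{(k)}$ is exactly the coordinate vector of $xI_k$. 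The only care needed is to fix this row-vector convention, which is the one compatible with the index placement in \eqref{eq:J_general_def}.

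Composing, $\mathbf x^\top J^{(k)}J^{(\ell)}$ is the coordinate vector of $(xI_k)I_\ell=x\,I_kI_\ell$. For $\ell=k$ this equals $-x$, because $I_k^2=-1$. Since $\mathbf x$ is arbitrary and the coordinate map is a bijection onto $\mathbb R^{2^{n-1}}$, this gives $(J^{(k)})^2=-\mathrm{Id}_{2^{n-1}}$. For $k\ne\ell$, the relation $I_kI_\ell=-I_\ell I_k$ gives $\mathbf x^\top J^{(k)}J^{(\ell)}=-\mathbf x^\top J^{(\ell)}J^{(k)}$ for all $\mathbf x$, hence $J^{(k)}J^{(\ell)}=-J^{(\ell)}J^{(k)}$.

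As an independent check of the signs, I will also verify (1) directly. The entry $(J^{(k)}J^{(k)})_{A,C}$ is nonzero only when $C=A$, and then it equals $(-1)^{\sigma(A,\{k\})+\sigma(A\triangle\{k\},\{k\})}$. Let $m:=|\{a\in A: a>k\}|$.

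If $k\notin A$, then $\sigma(A,\{k\})=m$ and $\sigma(A\cup\{k\},\{k\})=1+m$. The total exponent $2m+1$ is odd.

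If $k\in A$, the two roles swap and the exponent is again $2m+1$.

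Either way the entry is $-1$. A similar count, tracking whether $k,\ell$ lie in $A$ and the relative order of $k$ and $\ell$, would confirm (2). The abstract argument above avoids that case analysis, so I regard the convention-fixing step as the only real point of care.
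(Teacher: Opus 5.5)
Your proof is correct and takes the same route as the paper. The paper simply asserts that the two identities are the relations $I_k^2=-1$ and $I_kI_\ell=-I_\ell I_k$ transported through the representation of right multiplication by $I_k$; your argument supplies what it leaves implicit, namely (via Lemma~\ref{lem:clifford}) that $\mathbf x^\top J^{(k)}$ is the coordinate vector of $xI_k$, so that products of the $J^{(k)}$ compose in the same order as the right multiplications, and your direct sign count for (1) is also correct.
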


\begin{proof}
These identities encode the relations $I_k^2 = -1$ and $I_k I_\ell = -I_\ell I_k$ ($k \ne \ell$) of the Clifford algebra $\mathbb{R}_n$, transported to matrix form via the representation \eqref{eq:J_general_def}.
\end{proof}

For each $k$, define the block-diagonal lift
\[
J_N^{(k)} := \operatorname{diag}\bigl(J^{(k)}, \ldots, J^{(k)}\bigr) \in \mathbb{R}^{N\cdot 2^{n-1}\times N\cdot 2^{n-1}}\qquad (N \text{ blocks}).
\]
Properties (1) and (2) of Lemma \ref{lem:J-properties} carry over to $J_N^{(k)}$ verbatim.

\subsubsection{Symmetry of $M_f$ and the involutions $\mathcal T_k$}

\begin{lemma}\label{lem:symmetry-Mf-general}
For each $k \in \{2, \ldots, n\}$ and each $j \in \{1, \ldots, N\}$,
\begin{equation}\label{eq:symmetry_Mf_general}
\widehat{M_{f_j}} = J^{(k)} M_{f_j} \bigl(J^{(k)}\bigr)^{-1},
\end{equation}
and consequently
\begin{equation}\label{eq:symmetry_blockF_general}
\widehat{\mathsf{F}} = J^{(k)} \mathsf{F} \bigl(J_N^{(k)}\bigr)^{-1}.
\end{equation}
\end{lemma}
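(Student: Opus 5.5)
The plan is to verify \eqref{eq:symmetry_Mf_general} entrywise, using only the definition \eqref{eq:Mf-matrix-section} of $M_f$, the definition \eqref{eq:J_general_def} of $J^{(k)}$, and associativity of the Clifford product via Lemma~\ref{lem:clifford}. The block statement \eqref{eq:symmetry_blockF_general} then follows at once. Indeed, $\mathsf F=(M_{f_1}\ \cdots\ M_{f_N})$ and $J_N^{(k)}$ is block diagonal with copies of $J^{(k)}$. Hence $J^{(k)}\mathsf F (J_N^{(k)})^{-1}=\bigl(J^{(k)}M_{f_1}(J^{(k)})^{-1}\ \cdots\ J^{(k)}M_{f_N}(J^{(k)})^{-1}\bigr)$, and $\hat{\cdot}$ acts entrywise.

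Fix $k$ and write $J=J^{(k)}$. By Lemma~\ref{lem:J-properties}(1), $J^{-1}=-J$. Each row of $J$ has a single nonzero entry, namely $J_{A,A\triangle\{k\}}=(-1)^{\sigma(A,\{k\})}$. Therefore
\[
(JM_fJ^{-1})_{A,B}=-(-1)^{\sigma(A,\{k\})}\,(-1)^{\sigma(B\triangle\{k\},\{k\})}\,(M_f)_{A\triangle\{k\},\,B\triangle\{k\}}.
\]
Two observations handle the function part of this entry. First, $(A\triangle\{k\})\triangle(B\triangle\{k\})=A\triangle B$, so the same component $F_{A\triangle B}$ appears in both sides. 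Second, $|A\triangle\{k\}|$ and $|A|$ have opposite parity, so the rule in \eqref{eq:Mf-matrix-section} places $\hat F_{A\triangle B}$ exactly where $(M_f)_{A,B}$ has $F_{A\triangle B}$, and conversely. Since $\hat{\hat F}=F$ and the signs are real constants (so they commute with $\hat{\cdot}$), the function parts match those of $\widehat{M_f}$.

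It remains to prove the sign identity
\[
(-1)^{1+\sigma(A,\{k\})+\sigma(B\triangle\{k\},\{k\})+\sigma(A\triangle B,\,B\triangle\{k\})}=(-1)^{\sigma(A\triangle B,\,B)},
\]
which I expect to be the only real obstacle. I would derive it from associativity rather than by counting inversions. Put $A'=A\triangle B$ and expand $I_{A'}I_BI_k$ in two ways using Lemma~\ref{lem:clifford}. Grouping as $(I_{A'}I_B)I_k$ gives $(-1)^{\sigma(A',B)+\sigma(A,\{k\})}I_{A\triangle\{k\}}$. Grouping as $I_{A'}(I_BI_k)$ gives $(-1)^{\sigma(B,\{k\})+\sigma(A',B\triangle\{k\})}I_{A\triangle\{k\}}$. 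Comparing the two yields
\[
\sigma(A',B)+\sigma(A,\{k\})\equiv\sigma(B,\{k\})+\sigma(A',B\triangle\{k\})\pmod 2.
\]
Similarly, expanding $(I_BI_k)I_k=-I_B$ gives $\sigma(B,\{k\})+\sigma(B\triangle\{k\},\{k\})\equiv1$. Substituting this into the previous congruence gives exactly the required sign identity. This completes the entrywise equality $(JM_fJ^{-1})_{A,B}=\widehat{(M_f)_{A,B}}$, and with it the lemma.
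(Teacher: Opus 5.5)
Your proof is correct. It departs from the paper only at the sign identity, which is the one nontrivial step.

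The paper works with the equivalent relation $J^{(k)}M_f=\widehat{M_f}\,J^{(k)}$, so no inverse enters its entrywise comparison. It matches the function symbols by the same parity observation you use. It then proves the sign congruence by showing that $\sigma$ is bilinear modulo $2$ with respect to $\triangle$ in each slot, splitting $\sigma=|\cdot\cap\cdot|+\tau$ and counting, and cancelling terms in pairs.

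You instead conjugate directly, absorbing the extra $-1$ from $(J^{(k)})^{-1}=-J^{(k)}$. You get the congruence from associativity: the two groupings of $I_{A'}I_BI_k$, together with $(I_BI_k)I_k=-I_B$. Each product is reduced by Lemma~\ref{lem:clifford}, and the signs are compared using linear independence of $\{I_A\}$.

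So you use only the $2$-cocycle identity
\[
\sigma(X,Y)+\sigma(X\triangle Y,Z)\equiv\sigma(Y,Z)+\sigma(X,Y\triangle Z) \pmod 2,
\]
which is exactly associativity. The paper uses the stronger fact that $\sigma$ is a bilinear form modulo $2$; bilinearity implies the cocycle identity, not conversely.

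\emph{What your route buys.} It is shorter. It avoids the bilinearity lemma, whose left-slot half the paper only asserts by analogy. It also explains the identity conceptually: $J^{(k)}$ is right multiplication by $I_k$. The same trick, $(I_AI_k)I_k=-I_A$, gives $(J^{(k)})^2=-\mathrm{Id}$ entrywise, so you need not rely on the paper's one-line justification of Lemma~\ref{lem:J-properties}.

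\emph{What the paper's route buys.} It is self-contained at the level of the definition of $\sigma$ and does not invoke Lemma~\ref{lem:clifford} for overlapping index sets. It also yields a reusable structural identity for $\sigma$.
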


\begin{proof}
Drop the subscript $j$ and write $K=\{k\}$. Since $\bigl(J^{(k)}\bigr)^{-1}=-J^{(k)}$, identity \eqref{eq:symmetry_Mf_general} is equivalent to
\begin{equation}\label{eq:symmetry_Mf_equiv}
J^{(k)}\,M_f \;=\; \widehat{M_f}\,J^{(k)},
\end{equation}
which we verify entry-by-entry.

\smallskip\noindent\emph{Sign lemma.}\;
We first record that $\sigma$ is bilinear modulo $2$ in the symmetric-difference structure: for all subsets $X,Y,Z\subseteq\{2,\ldots,n\}$,
\begin{equation}\label{eq:sigma-bilinear}
\sigma(X\triangle Y,\,Z)\equiv \sigma(X,Z)+\sigma(Y,Z),\qquad
\sigma(X,\,Y\triangle Z)\equiv \sigma(X,Y)+\sigma(X,Z)\pmod 2.
\end{equation}
Indeed, the intersection-cardinality term satisfies $|X\cap(Y\triangle Z)|=|X\cap Y|+|X\cap Z|-2|X\cap Y\cap Z|$, and the inversion-count term satisfies
\[
\tau(X,Y\triangle Z)=\tau(X,Y)+\tau(X,Z)-2\,\tau(X,Y\cap Z),
\]
both of which reduce to the claimed bilinearity mod $2$. The left identity is analogous.

\smallskip\noindent\emph{Comparing entries.}\;
Fix $A,B\subseteq\{2,\ldots,n\}$. Since $J^{(k)}$ has a single nonzero entry in each row, namely $\bigl(J^{(k)}\bigr)_{A,A\triangle K}=(-1)^{\sigma(A,K)}$,
\[
\bigl(J^{(k)}\,M_f\bigr)_{A,B}
=(-1)^{\sigma(A,K)}\,(M_f)_{A\triangle K,\,B}.
\]
Substituting the definition \eqref{eq:Mf-matrix-section} and using $(A\triangle K)\triangle B=(A\triangle B)\triangle K$,
\begin{equation}\label{eq:LHS-entry}
\bigl(J^{(k)}\,M_f\bigr)_{A,B}
=(-1)^{\sigma(A,K)+\sigma((A\triangle B)\triangle K,\,B)}\cdot
\begin{cases}
F_{(A\triangle B)\triangle K}, & |A\triangle K|\text{ even,}\\
\hat F_{(A\triangle B)\triangle K}, & |A\triangle K|\text{ odd.}
\end{cases}
\end{equation}
Similarly, $J^{(k)}$ has a single nonzero entry in each column, $\bigl(J^{(k)}\bigr)_{B\triangle K,\,B}=(-1)^{\sigma(B\triangle K,K)}$, so
\[
\bigl(\widehat{M_f}\,J^{(k)}\bigr)_{A,B}
=(\widehat{M_f})_{A,\,B\triangle K}\cdot(-1)^{\sigma(B\triangle K,\,K)}.
\]
Now $\widehat{M_f}$ is obtained from $M_f$ by interchanging $\mathsf{F}\leftrightarrow\hat{\mathsf{F}}$ in every entry (the matrix entries are scalar-valued, so the hat acts coefficientwise and the sign factors are real). Substituting \eqref{eq:Mf-matrix-section} for $(\widehat{M_f})_{A,\,B\triangle K}$ and using $A\triangle(B\triangle K)=(A\triangle B)\triangle K$,
\begin{equation}\label{eq:RHS-entry}
\bigl(\widehat{M_f}\,J^{(k)}\bigr)_{A,B}
=(-1)^{\sigma((A\triangle B)\triangle K,\,B\triangle K)+\sigma(B\triangle K,\,K)}\cdot
\begin{cases}
\hat{F}_{(A\triangle B)\triangle K}, & |A|\text{ even,}\\
F_{(A\triangle B)\triangle K}, & |A|\text{ odd.}
\end{cases}
\end{equation}

\smallskip\noindent\emph{Matching the function symbols.}\;
Since $|A\triangle K|=|A|\pm 1$, the parities of $|A|$ and $|A\triangle K|$ are opposite. The case ``$|A\triangle K|$ even'' in \eqref{eq:LHS-entry} thus coincides with ``$|A|$ odd'' in \eqref{eq:RHS-entry}, and both select $F_{(A\triangle B)\triangle K}$; the other case selects $\hat F_{(A\triangle B)\triangle K}$ on both sides.

\smallskip\noindent\emph{Matching the signs.}\;
It remains to show that
\begin{equation}\label{eq:sign-identity}
\sigma(A,K)+\sigma((A\triangle B)\triangle K,\,B)
\;\equiv\;
\sigma((A\triangle B)\triangle K,\,B\triangle K)+\sigma(B\triangle K,\,K)\pmod 2.
\end{equation}
Applying right-bilinearity \eqref{eq:sigma-bilinear} to the first term on the right with $Y=B$, $Z=K$ gives
\[
\sigma((A\triangle B)\triangle K,\,B\triangle K)\equiv\sigma((A\triangle B)\triangle K,\,B)+\sigma((A\triangle B)\triangle K,\,K)\pmod 2,
\]
and applying left-bilinearity to $\sigma((A\triangle B)\triangle K,\,K)$ yields
\[
\sigma((A\triangle B)\triangle K,\,K)\equiv\sigma(A,K)+\sigma(B,K)+\sigma(K,K)\pmod 2.
\]
Combining and using left-bilinearity on the second term on the right, $\sigma(B\triangle K,K)\equiv\sigma(B,K)+\sigma(K,K)$, every term other than $\sigma(A,K)+\sigma((A\triangle B)\triangle K,B)$ cancels in pairs modulo $2$, establishing \eqref{eq:sign-identity}.

\smallskip
Together, \eqref{eq:LHS-entry} and \eqref{eq:RHS-entry} are equal in every entry $(A,B)$, proving \eqref{eq:symmetry_Mf_equiv} and hence \eqref{eq:symmetry_Mf_general}. The block version \eqref{eq:symmetry_blockF_general} follows immediately since $J_N^{(k)}$ acts blockwise on the columns of~$F$.
\end{proof}

For each $k \in \{2, \ldots, n\}$, define the operator $\mathcal T_k$ on $H^\infty(\mathbb{D})^{N\cdot 2^{n-1}\times 2^{n-1}}$ by
\begin{equation}\label{eq:Tk_def}
\mathcal T_k(\mathsf{P}) := \bigl(J_N^{(k)}\bigr)^{-1}\, \widehat{\mathsf{P}}\, J^{(k)}.
\end{equation}

\begin{prop}\label{prop:Tk_properties}
The operators $\{\mathcal T_k\}_{k=2}^n$ satisfy:
\begin{enumerate}
\item Each $\mathcal T_k$ is an involution: $\mathcal T_k^2 = \mathrm{Id}$.
\item The operators commute pairwise: $\mathcal T_k \mathcal T_\ell = \mathcal T_\ell \mathcal T_k$ for all $k, \ell \in \{2, \ldots, n\}$.
\item If $\mathsf{P}$ satisfies $\mathsf{F} \mathsf{P} = \mathrm{Id}_{2^{n-1}}$, then $\mathcal T_k(\mathsf{P})$ does too:
\[
\mathsf{F} \mathcal T_k(\mathsf{P}) = \mathrm{Id}_{2^{n-1}} \qquad \text{for every } k.
\]
\end{enumerate}
\end{prop}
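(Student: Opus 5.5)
The plan is to check all three items by direct matrix algebra. The only ingredients are the algebraic properties of the involution $\hat{\cdot}$ and of the constant matrices $J^{(k)}$ from Lemma~\ref{lem:J-properties}, together with the symmetry relation \eqref{eq:symmetry_blockF_general} of Lemma~\ref{lem:symmetry-Mf-general}.

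First I would record the basic facts about $\hat{\cdot}$ acting entrywise on matrices over $H^\infty(\mathbb D)$.
\begin{itemize}
\item It maps $H^\infty(\mathbb D)$ to itself and preserves norms.
\item It is an involution: $\hat{\hat X}=X$.
\item It is additive and multiplicative on matrix products, $\widehat{XY}=\hat X\,\hat Y$. This holds because $\widehat{ab}=\hat a\,\hat b$ for scalar functions, and entries of a product are sums of products.
\item It fixes real constant matrices, so $\widehat{J^{(k)}}=J^{(k)}$ and $\widehat{J^{(k)}_N}=J^{(k)}_N$.
\end{itemize}
I would also note that $(J^{(k)})^{-1}=-J^{(k)}$ by Lemma~\ref{lem:J-properties}(1). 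The same holds blockwise for $J_N^{(k)}$, so the inverses satisfy the same square and anticommutation relations.

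For (1), I would compute
\[
\mathcal T_k^2(\mathsf P)=(J_N^{(k)})^{-1}\,\widehat{\bigl((J_N^{(k)})^{-1}\widehat{\mathsf P}\,J^{(k)}\bigr)}\,J^{(k)}=(J_N^{(k)})^{-2}\,\mathsf P\,(J^{(k)})^{2}.
\]
Since $(J_N^{(k)})^{-2}=-\mathrm{Id}$ and $(J^{(k)})^2=-\mathrm{Id}$, the two signs cancel and this equals $\mathsf P$.

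For (2), the same manipulation gives
\[
\mathcal T_k\mathcal T_\ell(\mathsf P)=(J_N^{(k)})^{-1}(J_N^{(\ell)})^{-1}\,\mathsf P\,J^{(\ell)}J^{(k)},
\]
because the two hats cancel. The expression for $\mathcal T_\ell\mathcal T_k(\mathsf P)$ is the same with $k,\ell$ interchanged. Applying Lemma~\ref{lem:J-properties}(2) on each side (the left factors and the right factors) produces two factors of $-1$, which cancel, so the two expressions agree. When $k=\ell$ there is nothing to prove.

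For (3), I would apply $\hat{\cdot}$ to $\mathsf F\mathsf P=\mathrm{Id}$ to get $\widehat{\mathsf F}\,\widehat{\mathsf P}=\mathrm{Id}$. Then I substitute \eqref{eq:symmetry_blockF_general} to obtain $J^{(k)}\mathsf F (J_N^{(k)})^{-1}\widehat{\mathsf P}=\mathrm{Id}$. Multiplying on the left by $(J^{(k)})^{-1}$ and on the right by $J^{(k)}$ gives $\mathsf F\,\mathcal T_k(\mathsf P)=\mathrm{Id}$. This is exactly the computation done in the quaternionic case in Subsection~\ref{ss:matrix-block}.

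No step here is a real obstacle. The point needing care is that the antilinear map $\hat{\cdot}$ genuinely commutes with multiplication by the real matrices $J^{(k)}$ and $J_N^{(k)}$. The second point is the bookkeeping of the signs in (2), where the sizes of the left and right matrices differ but obey the same relations. All the substantive content was already established in Lemma~\ref{lem:symmetry-Mf-general}.
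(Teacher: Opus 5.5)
Your proof is correct and follows the paper's argument essentially step for step. Like the paper, you reduce (1) to $(J_N^{(k)})^{-2}\mathsf P\,(J^{(k)})^2=\mathsf P$, get (2) from the two cancelling anticommutation signs, and obtain (3) by applying $\hat{\cdot}$ to $\mathsf F\mathsf P=\mathrm{Id}$, substituting \eqref{eq:symmetry_blockF_general}, and conjugating by $J^{(k)}$.
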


\begin{proof}
For (1), compute
\[
\mathcal T_k^2(\mathsf{P}) = \bigl(J_N^{(k)}\bigr)^{-1}\,\widehat{\bigl(J_N^{(k)}\bigr)^{-1}\,\widehat{\mathsf{P}}\, J^{(k)}}\,J^{(k)} = \bigl(J_N^{(k)}\bigr)^{-2}\, \mathsf{P}\,\bigl(J^{(k)}\bigr)^2.
\]
Since $J^{(k)}, J_N^{(k)}$ are real, $\widehat{J^{(k)}} = J^{(k)}$ and similarly for $J_N^{(k)}$. By Lemma \ref{lem:J-properties}, $(J^{(k)})^2 = -\mathrm{Id}$ and $(J_N^{(k)})^{-2} = -\mathrm{Id}$, so the two minus signs cancel: $\mathcal T_k^2(\mathsf{P}) = \mathsf{P}$.

For (2), let $k \ne \ell$. Then
\begin{align*}
\mathcal T_k \mathcal T_\ell(\mathsf{P}) &= \bigl(J_N^{(k)}\bigr)^{-1}\bigl(J_N^{(\ell)}\bigr)^{-1}\, \mathsf{P}\, J^{(\ell)}\, J^{(k)},\\
\mathcal T_\ell \mathcal T_k(\mathsf{P}) &= \bigl(J_N^{(\ell)}\bigr)^{-1}\bigl(J_N^{(k)}\bigr)^{-1}\, \mathsf{P}\, J^{(k)}\, J^{(\ell)}.
\end{align*}
By Lemma \ref{lem:J-properties}, $J^{(\ell)} J^{(k)} = -J^{(k)} J^{(\ell)}$ and likewise for $J_N$. Each side acquires two minus signs, one from the left and one from the right, which cancel. Hence $\mathcal T_k \mathcal T_\ell = \mathcal T_\ell \mathcal T_k$.

For (3), apply $\widehat\cdot$ to the identity $\mathsf{F} \mathsf{P} = \mathrm{Id}_{2^{n-1}}$:
\[
\widehat{\mathsf{F}} \widehat{\mathsf{P}} = \mathrm{Id}_{2^{n-1}}.
\]
Substitute \eqref{eq:symmetry_blockF_general} to get
\[
\bigl(J^{(k)} \mathsf{F} (J_N^{(k)})^{-1}\bigr) \widehat{\mathsf{P}} = \mathrm{Id}_{2^{n-1}}, \]
upon using $-J^{(k)} = \bigl(J^{(k)}\bigr)^{-1}$, since $J^{(k)}$ is real with $(J^{(k)})^2 = -\mathrm{Id}$, 
one then obtains $\mathsf{F}\mathcal T_k(\mathsf{P}) = \mathrm{Id}_{2^{n-1}}$.
\end{proof}

\subsection{The symmetrized solution}

In light of Proposition \ref{prop:Tk_properties}, the projection onto the joint fixed subspace of $\{\mathcal T_k\}_{k=2}^n$ is realized by iterated averaging. Define recursively
\[
\mathsf{P}^{(2)} := \tfrac12\bigl(\mathsf{P} + \mathcal T_2(\mathsf{P})\bigr),
\]
and, for $k = 3, 4, \ldots, n$,
\[
\mathsf{P}^{(k)} := \tfrac12\bigl(\mathsf{P}^{(k-1)} + \mathcal T_k(\mathsf{P}^{(k-1)})\bigr).
\]
Let $\mathsf{H} := \mathsf{P}^{(n)}$.

\begin{prop}\label{prop:G-formula}
The matrix $\mathsf{H}$ admits the closed form
\begin{equation}\label{eq:G_avg}
\mathsf{H} \;=\; \frac{1}{2^{n-1}} \sum_{S \subseteq \{2, 3, \ldots, n\}} \mathcal T_S(\mathsf{P}),
\end{equation}
where $\mathcal T_S := \prod_{k \in S} \mathcal T_k$ (well-defined since the $\mathcal T_k$'s commute, with $\mathcal T_\emptyset = \mathrm{Id}_{2^{n-1}}$). Moreover, $\mathsf{H}$ is the orthogonal projection of $\mathsf{P}$ onto the joint fixed subspace of $\{\mathcal T_k\}_{k=2}^n$, and
\begin{equation}\label{eq:block_solution_identity_general}
\mathsf{F}(z)\, \mathsf{H}(z) = \mathrm{Id}_{2^{n-1}}.
\end{equation}
\end{prop}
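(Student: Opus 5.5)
The plan is to prove the three assertions of Proposition~\ref{prop:G-formula} in order, using Proposition~\ref{prop:Tk_properties} throughout. Each $\mathcal T_k$ is real-linear: $\widehat{\cdot}$ is additive and commutes with multiplication by real scalars, and $J^{(k)}$, $J_N^{(k)}$ are real.

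\textbf{Step 1: the closed form.} I will prove by induction on $k$ that
\[
\mathsf{P}^{(k)}=2^{-(k-1)}\sum_{S\subseteq\{2,\dots,k\}}\mathcal T_S(\mathsf{P}).
\]
The case $k=2$ is the definition. For the inductive step, apply $\mathcal T_k$ to the sum for $\mathsf{P}^{(k-1)}$, using linearity. Commutativity (part (2) of Proposition~\ref{prop:Tk_properties}) gives $\mathcal T_k\mathcal T_S=\mathcal T_{S\cup\{k\}}$ for $S\subseteq\{2,\dots,k-1\}$. Hence
\[
\tfrac12\bigl(\mathsf{P}^{(k-1)}+\mathcal T_k\mathsf{P}^{(k-1)}\bigr)
\]
is the sum over the subsets not containing $k$ plus the sum over those containing $k$. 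That is the full sum over $\{2,\dots,k\}$ with the factor $2^{-(k-1)}$. Taking $k=n$ gives \eqref{eq:G_avg}, and $\mathcal T_S$ is well defined because the $\mathcal T_k$ commute.

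\textbf{Step 2: the Bezout identity.} Let $\mathcal R=\{\mathsf{Q}:\mathsf{F}\mathsf{Q}=\mathrm{Id}\}$. Part (3) of Proposition~\ref{prop:Tk_properties} says each $\mathcal T_k$ maps $\mathcal R$ into itself, so by iteration every $\mathcal T_S(\mathsf{P})$ lies in $\mathcal R$. The set $\mathcal R$ is affine, so it is closed under averages with weights summing to $1$. Since $\mathsf{F}$ is linear in its argument,
\[
\mathsf{F}\mathsf{H}=2^{-(n-1)}\sum_S \mathrm{Id}=\mathrm{Id},
\]
which is \eqref{eq:block_solution_identity_general}.

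\textbf{Step 3: projection onto the joint fixed subspace.} Write $\Pi=2^{-(n-1)}\sum_S\mathcal T_S$. For each $k$, $\mathcal T_k\Pi=\Pi$, because $S\mapsto S\triangle\{k\}$ permutes the subsets and $\mathcal T_k^2=\mathrm{Id}$ gives $\mathcal T_k\mathcal T_S=\mathcal T_{S\triangle\{k\}}$. So $\mathsf{H}$ is jointly fixed. If $\mathsf{Q}$ is jointly fixed, then $\Pi\mathsf{Q}=\mathsf{Q}$, so $\Pi$ is idempotent with range equal to the joint fixed subspace.

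For orthogonality I will use the real inner product $\langle \mathsf{X},\mathsf{Y}\rangle=\mathrm{Re}\,\operatorname{tr}(\mathsf{X}^*\mathsf{Y})$, evaluated pointwise on the circle or as the $L^2$ pairing. In this inner product each $\mathcal T_k$ is an isometry: $\widehat{\cdot}$ preserves the $L^2$ and boundary norms via $z\mapsto\bar z$ and complex conjugation, and $J^{(k)}$, $J_N^{(k)}$ are signed permutation matrices, hence orthogonal. An isometric involution is self-adjoint, so $\Pi$ is a self-adjoint idempotent, i.e.\ an orthogonal projection.

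The main obstacle is making ``orthogonal'' precise, since $H^\infty$ is not a Hilbert space. I would phrase it on $L^2$ of the circle with the real Hilbert structure above, noting that $\widehat{\cdot}$ is only real-linear. The real point to check there is $\langle\widehat{X},\widehat{Y}\rangle=\langle X,Y\rangle$, which holds under the change of variables $z\mapsto \bar z$. I will also note that the averaging does not increase the norm, so $\|\mathsf{H}\|_\infty\le\|\mathsf{P}\|_\infty$.
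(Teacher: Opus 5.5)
Your proof is correct and follows the paper's argument: the closed form comes from expanding $\prod_{k}\tfrac12(I+\mathcal T_k)$ using commutativity, and the Bezout identity comes from averaging right inverses that each satisfy $\mathsf{F}\,\mathcal T_S(\mathsf{P})=\mathrm{Id}_{2^{n-1}}$. Your one real addition is in Step 3, where you justify ``orthogonal'' (the paper only cites $\mathcal T_k^2=\mathrm{Id}$, which gives idempotence but not self-adjointness) by working in the real inner product $\mathrm{Re}\int_{\mathbb{T}}\operatorname{tr}(\mathsf{X}^*\mathsf{Y})$, where each $\mathcal T_k$ is an isometric, hence self-adjoint, involution; only this integrated pairing works, because a pointwise pairing is not $\widehat{\cdot}$-invariant ($\widehat{\cdot}$ exchanges $z$ and $\bar z$).
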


\begin{proof}
The formula \eqref{eq:G_avg} follows from iterating
\[
\mathsf{P}^{(k)} = \tfrac12(I + \mathcal T_k) \mathsf{P}^{(k-1)},
\]
together with the pairwise commutativity of the $\mathcal T_k$'s established in Proposition \ref{prop:Tk_properties}(2): expanding the product
\[
\mathsf{H} = \frac{1}{2^{n-1}}\Bigl(\prod_{k=2}^n (I + \mathcal T_k)\Bigr) \mathsf{P} = \frac{1}{2^{n-1}}\sum_{S \subseteq \{2,\ldots,n\}} \mathcal T_S(\mathsf{P}).
\]
The projection property holds because $\frac12(I + \mathcal T_k)$ is the orthogonal projection onto the $+1$-eigenspace of $\mathcal T_k$ (since $\mathcal T_k^2 = \mathrm{Id}_{2^{n-1}}$ by Proposition \ref{prop:Tk_properties}(1)), and these projections commute. Finally, by Proposition \ref{prop:Tk_properties}(3), each term $\mathcal T_S(\mathsf{P})$ satisfies $\mathsf{F} \mathcal T_S(\mathsf{P}) = \mathrm{Id}_{2^{n-1}}$, so the average $\mathsf{H}$ does as well.
\end{proof}

\subsection{Recovery of the slice monogenic solution}

The joint fixed subspace of $\{\mathcal T_k\}_{k=2}^n$ inside $H^\infty(\mathbb{D})^{N\cdot 2^{n-1} \times 2^{n-1}}$ is precisely the subspace of matrices with the slice monogenic block structure
\[
\mathsf{H} = \begin{pmatrix} M_{h_1} \\ \vdots \\ M_{h_N} \end{pmatrix},
\]
where each $M_{h_j}$ has the form \eqref{eq:Mf-matrix-section} for some $H_A^{(j)} \in H^\infty(\mathbb{D})$, $A \in \mathcal I$.

Substituting this block form into \eqref{eq:block_solution_identity_general} yields
\[
M_{f_1} M_{h_1} + \cdots + M_{f_N} M_{h_N} = \mathrm{Id}_{2^{n-1}},
\]
which by the homomorphism property of the matrix representation $f_I \mapsto M_{f}$ is equivalent to the Clifford Bezout identity
\[
f_{I,1} * h_{I,1} + \cdots + f_{I,N} * h_{I,N} = 1 \qquad \text{on}\ \mathbb B\cap\mathbb{C}_I\cong \mathbb D,
\]
and via extension theorem, see Theorem \ref{thm:extension}, we obtain uniquely determined slice monogenic functions $h_1,\ldots , h_N$ defined on $\mathbb B$ and solving $f_1*h_1+\cdots +f_N*h_N=1$.
We have therefore proved:

\begin{thm}[General Clifford corona theorem]\label{thm:corona-general}
Let $n \ge 2$, and $N\in\mathbb{N}\cup\{\infty\}$ and let $f_{1}, \ldots, f_{N}$ be bounded slice monogenic functions on $\mathbb{B}\subset\mathbb{R}^{n+1}$, $\mathbb{R}_n$-valued. If the block matrix $\mathsf{F} = (M_{f_1} \cdots M_{f_N})$ defined on $\mathbb D$ (isomorphic to $\mathbb{B} \cap \mathbb{C}_I$ for some $I\in\mathbb S$) admits a bounded right inverse via the matrix Corona theorem, then there exist bounded slice monogenic functions $h_{1}, \ldots, h_{N}$ on $\mathbb{B}$ satisfying the Bezout identity
\[
f_{1} * h_{1} + \cdots + f_{N} * h_{N} = 1.
\]
\end{thm}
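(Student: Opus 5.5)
The plan is to assemble the pieces already established. The slice Bezout problem will be reduced to a matrix right-inverse problem on a single slice. The right inverse will then be projected onto the structured subspace. Finally, the result will be lifted back to $\mathbb B$.

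\textbf{Step 1: pass to the slice matrix $\mathsf{F}$.} Fix $I\in\mathbb S$, complete it to a basis $I=I_1,I_2,\ldots,I_n$, and split each $f_j$ on $\mathbb C_I$ via Lemma~\ref{lem:splitting}. Since $f_j$ is bounded on $\mathbb B$, each component $F^{(j)}_A$ lies in $H^\infty(\mathbb D)$. Form $M_{f_j}$ by \eqref{eq:Mf-matrix-section} and set $\mathsf{F}=(M_{f_1}\cdots M_{f_N})$. By hypothesis there is a bounded $\mathsf{P}\in H^\infty(\mathbb D)^{N2^{n-1}\times 2^{n-1}}$ with $\mathsf{F}\mathsf{P}=\mathrm{Id}_{2^{n-1}}$.

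\textbf{Step 2: symmetrize.} Using Lemma~\ref{lem:symmetry-Mf-general} and Proposition~\ref{prop:Tk_properties}, the operators $\mathcal T_k$ preserve right inverses and generate a commuting group of involutions. Define $\mathsf H$ by \eqref{eq:G_avg}. Proposition~\ref{prop:G-formula} gives $\mathsf F\mathsf H=\mathrm{Id}$. The operators $\mathcal T_k$ are conjugations by constant orthogonal signed permutation matrices composed with $\hat\cdot$, and $\hat\cdot$ preserves the $H^\infty$ norm. Hence $\|\mathsf H\|\le\|\mathsf P\|$ and the entries remain in $H^\infty(\mathbb D)$.

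\textbf{Step 3: read off the block structure (main obstacle).} I must show that a matrix fixed by every $\mathcal T_k$ has each $2^{n-1}\times 2^{n-1}$ block of the form $M_h$ from \eqref{eq:Mf-matrix-section}. The text asserts this, but it is the real content. Because $J_N^{(k)}$ is block diagonal, it suffices to treat one block $X$ with $X=(J^{(k)})^{-1}\hat X J^{(k)}$, i.e.\ $J^{(k)}X=\hat X J^{(k)}$ for all $k$.

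I would take the first column $x_B:=X_{B,\emptyset}$ and define $H_B:=x_B$ for $|B|$ even and $H_B:=\hat x_B$ for $|B|$ odd. Next I would show by induction on $|C|$ that column $C$ is determined by column $\emptyset$ and agrees with $(M_h)_{\cdot,C}$. Write $C=C'\triangle\{k\}$ and compare the $(A,C')$ entries of $J^{(k)}X=\hat XJ^{(k)}$. Since $J^{(k)}$ is a signed permutation, this expresses $\hat X_{A,C}$ as a sign times $X_{A\triangle\{k\},C'}$. The required sign identity is exactly the computation \eqref{eq:sign-identity} from Lemma~\ref{lem:symmetry-Mf-general}, run in reverse, since $M_h$ itself satisfies the same intertwining and the relation determines columns uniquely.

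One potential subtlety is whether the fixed space is exactly the image of $h\mapsto M_h$ rather than something larger. Uniqueness along the chain $\emptyset\to\{k\}\to\cdots$ handles this, because the $J^{(k)}$ generate all column shifts $C\mapsto C\triangle\{k\}$. Consistency for different orderings of $C$ is automatic, since $M_h$ provides a solution.

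\textbf{Step 4: conclude.} With $\mathsf H=(M_{h_1};\ldots;M_{h_N})$, Proposition~\ref{prop:matrix_equiv} and its multi-generator form \eqref{eq:Corona-matrix-many-generators} give $\sum_j f_{I,j}*h_{I,j}=1$ on $\mathbb B\cap\mathbb C_I$. I would then let $h_j:=\mathrm{ext}(h_{I,j})$ via Theorem~\ref{thm:extension}.

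The function $\sum_j f_j*h_j-1$ is slice monogenic and vanishes on $\mathbb C_I\cap\mathbb B$. By Theorem~\ref{thm:identity-principle} it vanishes identically on $\mathbb B$.

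Boundedness of $h_j$ on $\mathbb B$ follows from Theorem~\ref{thm:representation}. The value at $u+I'v$ is a combination of the values at $u\pm Iv$ with coefficients of norm at most $1$. Hence $\|h_j\|_{H^\infty(\mathbb B)}\lesssim\sup_{\mathbb D}|h_{I,j}|$, which in turn is bounded by $2^{(n-1)/2}\|\mathsf H\|\le 2^{(n-1)/2}\|\mathsf P\|$.
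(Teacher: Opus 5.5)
Your proposal is correct and follows the paper's route: a matrix-Corona right inverse of $\mathsf{F}$ on one slice, averaging over the commuting involutions $\mathcal T_k$, reading off the $M_{h_j}$ block structure, and extending from $\mathbb{B}\cap\mathbb{C}_I$ to $\mathbb{B}$. Your column-by-column induction in Step 3 (using the single-entry structure of $J^{(k)}$ and the fact that $M_h$ satisfies the same intertwining by Lemma~\ref{lem:symmetry-Mf-general}) proves the fixed-subspace characterization, which the paper only asserts. Your explicit appeal to the Identity Principle likewise makes the paper's extension step rigorous.
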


This results implies the following, see Theorem \ref{thm:main-intro}:
\begin{thm}[Slice monogenic Corona theorem]
Let $n\ge 2$, $N\in\mathbb{N}\cup\{\infty\}$, and $0<\delta<1$.  Let $\mathbb B\subseteq\mathbb{R}^{n+1}$ be the unit ball and let $f_{1},\ldots,f_{N}\in H^\infty(\mathbb B)$ be slice monogenic functions valued in $\mathbb{R}_n$.  Suppose the data satisfy the Carleson-type condition
\[
0<\delta^2\le \sum_{j=1}^N |f_{j}(q)|^2\le 1\quad\text{for all }q\in \mathbb B,
\]
together with the second, quadratic Carleson condition arising from $\mathsf{F}\,\mathsf{F}^\ast\ge\delta^2\,\mathrm{Id}_{2^{n-1}}$ for the block matrix $\mathsf{F}=(M_{f_1}\cdots M_{f_N})$ (made explicit in Theorem~\ref{thm:master-formula}).  Then there exist slice monogenic functions $h_{1},\ldots,h_{N}\in H^\infty(\mathbb B)$ satisfying the Bezout identity
\[
\sum_{j=1}^{N} (f_{j}\ast h_{j})(q)=1\quad\text{for all }q\in \mathbb B,
\]
with norm estimates
\begin{equation}
\max_{1\le j\le N}\norm{h_{j}}_{H^\infty(\mathbb B)}\lesssim C(\delta,N,n),\label{estim}
\end{equation}
where $C(\delta,N,n)$ is the constant in the matrix-Corona estimate for the $2^{n-1}\times N\cdot 2^{n-1}$ block matrix $\mathsf{F}$.  In particular, $C(\delta,N,n)$ depends polynomially on $1/\delta$ with exponent and coefficients depending on $n$ and $N$; for $n=2$ it specializes to the explicit constant $C(\delta,N)$ of \cite{CPSW}.
\end{thm}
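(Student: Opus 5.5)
The plan is to deduce the statement from Theorem~\ref{thm:corona-general}. The only work is to check that the two Carleson-type hypotheses put $\mathsf{F}$ in the range of the matrix Corona theorem, and then to track the norm of the symmetrized solution back to $H^\infty(\mathbb B)$.

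First we fix $I\in\mathbb S$ and complete it to a basis $I=I_1,I_2,\dots,I_n$. We then restrict each $f_j$ to $\mathbb B\cap\mathbb C_I\cong\mathbb D$ and split it via Lemma~\ref{lem:splitting} into components $F^{(j)}_A\in H^\infty(\mathbb D)$. From these we form $M_{f_j}$ by \eqref{eq:Mf-matrix-section} and the block row $\mathsf F=(M_{f_1}\cdots M_{f_N})$. Every entry of $\mathsf F$ is $\pm F^{(j)}_A$ or $\pm\hat F^{(j)}_A$, and $\hat{\cdot}$ preserves the sup norm on $\mathbb D$. Moreover, $|f_j(z)|^2=\sum_A|F^{(j)}_A(z)|^2$ for $z\in\mathbb C_I$, since the $I_A$ and $I_1I_A$ form an orthonormal basis of $\mathbb R_n$. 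Hence each row of $\mathsf F(z)$ has squared norm equal to $\sum_j|f_j(w)|^2$ with $w=z$ or $w=\bar z$, and this is at most $1$. So $\|\mathsf F\|_{H^\infty}\le \sqrt{2^{n-1}}$ (crudely), which gives the boundedness half of the matrix Corona hypothesis. The lower bound $\mathsf F\mathsf F^\ast\ge\delta^2\,\mathrm{Id}_{2^{n-1}}$ is exactly the second, quadratic hypothesis. Its diagonal entries are bounded below by the first Carleson condition, and the full positivity is the content made explicit by Theorem~\ref{thm:master-formula} via Sylvester's criterion.

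Next we apply the matrix Corona theorem for $H^\infty(\mathbb D)$ with finitely many rows. For $N=\infty$ we use the operator-valued version, in which $\mathsf F$ is an operator from $\ell^2$ into $\mathbb C^{2^{n-1}}$. This yields $\mathsf P$ with $\mathsf F\mathsf P=\mathrm{Id}$ and $\|\mathsf P\|_{H^\infty}\le C(\delta,N,n)$, polynomial in $1/\delta$. We then form $\mathsf H$ by Proposition~\ref{prop:G-formula}. Each $\mathcal T_S$ is composed of $\hat{\cdot}$ and multiplication by the signed permutation matrices $J^{(k)}$ and $J^{(k)}_N$. These are orthogonal, so $\|\mathcal T_S(\mathsf P)(z)\|=\|\mathsf P(\bar z)\|$ or $\|\mathsf P(z)\|$, and the average satisfies $\|\mathsf H\|_{H^\infty}\le\|\mathsf P\|_{H^\infty}$. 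By the recovery step preceding Theorem~\ref{thm:corona-general}, $\mathsf H$ is a stack of blocks $M_{h_j}$. The first column of $M_{h_j}$ consists of the components $\pm H^{(j)}_A$ or $\pm\hat H^{(j)}_A$, so $\sum_A|H^{(j)}_A(z)|^2\le\|\mathsf H(z)\|^2$. This gives $\sup_{z\in\mathbb D}|h_{I,j}(z)|\le C(\delta,N,n)$.

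Finally, we extend each $h_{I,j}$ by Theorem~\ref{thm:extension} and obtain the Bezout identity on $\mathbb B$, as in Theorem~\ref{thm:corona-general}. The main obstacle is the passage from the bound on the single slice $\mathbb C_I$ to the bound on all of $\mathbb B$. The extension formula \eqref{eq:extension-formula} gives $|h_j(u+I'v)|\le |h_{I,j}(u+Iv)|+|h_{I,j}(u-Iv)|$ for any $I'\in\mathbb S$, so $\|h_j\|_{H^\infty(\mathbb B)}\le 2\sup_{\mathbb D}|h_{I,j}|$. This is a loss by an absolute factor only, which is absorbed into $\lesssim$. For $n=2$ the construction reduces to Section~\ref{s:simpleproof}, and the constant reduces to the one in \cite{CPSW}.
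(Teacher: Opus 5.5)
Your proposal follows the paper's route: the paper invokes Theorem~\ref{thm:corona-general} and gets the norm bound from the representation formula. You fill in the details the paper omits — boundedness of $\mathsf F$ from the first Carleson condition, norm invariance of the $\mathcal T_S$-average, and the factor $2$ from \eqref{eq:extension-formula} — and those steps are correct.

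One small slip needs fixing. The first column of $M_{h_j}$ contains $\hat H^{(j)}_A(z)=\overline{H^{(j)}_A(\bar z)}$ for odd $|A|$, so it does not give $\sum_A|H^{(j)}_A(z)|^2\le\|\mathsf H(z)\|^2$ pointwise. Use the first row $A=\emptyset$ instead: its entries are $(-1)^{\sigma(B,B)}H^{(j)}_B(z)$, which gives $|h_{I,j}(z)|\le\|\mathsf H(z)\|$ directly. Alternatively, combining the first-column bounds at $z$ and $\bar z$ costs only a factor $\sqrt2$.
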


\begin{proof}
Theorem \ref{thm:corona-general} shows that there exist functions $h_1,\ldots, h_N$ slice monogenic in $\mathbb B\subset\mathbb{R}^{n+1}$ solutions to the problem. The fact that $h_j$ satisfy the norm estimate \eqref{estim} follows from the representation formula \eqref{eq:representation}. 
\end{proof}
\subsection{Specializations}

\begin{itemize}
\item For $n = 2$ (the quaternionic case, $\mathbb{R}_2 \cong \mathbb{H}$), there is a single involution $\mathcal T_2$ and the symmetrized solution is the single-step average $\mathsf{H} = \tfrac12(\mathsf{P} + \mathcal T_2(\mathsf{P}))$, recovering the quaternionic case.

\item For $n = 3$, there are two commuting involutions $\mathcal T_2, \mathcal T_3$ and the symmetrized solution involves the four-term average
\[
\mathsf{H} = \tfrac14\bigl(\mathsf{P} + \mathcal T_2(\mathsf{P}) + \mathcal T_3(\mathsf{P}) + \mathcal T_2 \mathcal T_3(\mathsf{P})\bigr),
\]
recovering the formulation of the previous subsection.

\item For general $n$, the symmetrization is a $2^{n-1}$-term average indexed by subsets $S \subseteq \{2, \ldots, n\}$.
\end{itemize}

\section{Appendix: The Determinant computation via Cauchy--Binet}
\label{s:LinearAlgebra}

In this appendix we collect the relevant linear algebra facts that allow us to see the corresponding Carleson condition on the data.

The idea will be to apply the Cauchy--Binet identity to the matrix $\mathcal{M}(z)=\mathsf{F}(z)^\ast \mathsf{F}(z)$.  This expands the matrix in terms of minors or non-negative quantities.

\subsection{\texorpdfstring{The Quaternionic Case: $n=2$}{The Quaternionic Case: n=2}}

\begin{thm}\label{thm:detN-CB}
Let $F_1,\ldots,F_N$ and $G_1,\ldots,G_N$ be holomorphic functions on $\mathbb{D}$, and write $\hat{h}(z) := \overline{h(\overline{z})}$ for the Schwarz reflection (so $\hat h$ is holomorphic). Define the $2 \times 2N$ matrix
\begin{equation}\label{eq:tildeF-N}
\mathsf{F}(z) :=
\begin{pmatrix}
\begin{array}{cccccc}
F_1(z) & -G_1(z) & \cdots & F_N(z) & -G_N(z) \\
\hat G_1(z) & \hat F_1(z) & \cdots & \hat G_N(z) & \hat F_N(z)
\end{array}
\end{pmatrix},
\end{equation}
and let $\mathcal{M}(z) := \mathsf{F}(z)\mathsf{F}^*(z)$ be the associated $2 \times 2$ matrix. Then
\begin{equation}\label{eq:detN-CB}
\begin{aligned}
\det\mathcal{M}(z)
&= \sum_{r=1}^{N}\sum_{j=1}^{N} \bigl| F_j(z)\hat F_r(z) + G_r(z)\hat G_j(z)\bigr|^2 \\
&\quad + \sum_{1 \le r < j \le N} \bigl| F_j(z)\hat G_r(z) - F_r(z)\hat G_j(z)\bigr|^2 \\
&\quad + \sum_{1 \le r < j \le N} \bigl| G_j(z)\hat F_r(z) - G_r(z)\hat F_j(z)\bigr|^2.
\end{aligned}
\end{equation}
\end{thm}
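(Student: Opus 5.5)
The plan is a direct application of the Cauchy--Binet formula to the $2\times 2N$ matrix $\mathsf{F}(z)$. For a $2\times m$ matrix $X$, Cauchy--Binet gives
\[
\det(XX^\ast)=\sum_{1\le a<b\le m}\bigl|\det X_{\{a,b\}}\bigr|^2,
\]
where $X_{\{a,b\}}$ is the $2\times 2$ submatrix formed by columns $a$ and $b$. This is because the minors of $X^\ast$ are the complex conjugates of the corresponding minors of $X$. So $\det\mathcal M(z)$ is a sum of squared moduli of all $2\times 2$ column minors of $\mathsf{F}(z)$. The work is to enumerate these minors and match them to the three sums in \eqref{eq:detN-CB}.

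Label the columns of $\mathsf{F}$ in the interleaved order. For generator $j$ they are
\[
c_{2j-1}=(F_j,\hat G_j)^\top,\qquad c_{2j}=(-G_j,\hat F_j)^\top .
\]
I will sort the column pairs into the following types and compute each minor (suppressing $z$).

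\emph{Same generator,} $(c_{2j-1},c_{2j})$: the minor is $F_j\hat F_j+G_j\hat G_j$. This is the diagonal $r=j$ term of the first double sum.

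For $r<j$ there are four cross-generator pairs:
\begin{itemize}
\item $(c_{2r-1},c_{2j-1})$ gives $F_r\hat G_j-F_j\hat G_r$. Its modulus equals that of $F_j\hat G_r-F_r\hat G_j$, which is the second sum.
\item $(c_{2r},c_{2j})$ gives $-G_r\hat F_j+G_j\hat F_r$. This is the third sum.
\item $(c_{2r-1},c_{2j})$ gives $F_r\hat F_j+G_j\hat G_r$. This is the $(r,j)\mapsto(j,r)$ instance of the first double sum, i.e.\ $F_{j'}\hat F_{r'}+G_{r'}\hat G_{j'}$ with $(r',j')=(j,r)$.
\item $(c_{2r},c_{2j-1})$ gives $-G_r\hat G_j-F_j\hat F_r$. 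Its modulus equals $|F_j\hat F_r+G_r\hat G_j|$, the $(r,j)$ instance.
\end{itemize}

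The last two types, together with the same-generator minors, exactly fill the ordered double sum over all $(r,j)\in\{1,\dots,N\}^2$ in the first line of \eqref{eq:detN-CB}. The first two types give the strict-upper-triangular sums. Summing over all types therefore yields \eqref{eq:detN-CB}.

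The only step needing care is this bookkeeping: the mixed minors must be assigned to the ordered pairs $(r,j)$ and $(j,r)$ without double counting. Overall signs are harmless because only squared moduli appear. As a sanity check I will compare with the $N=2$ expansion \eqref{e:NewDenominator}, which should have $4+1+1=6$ terms; it does.

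For $N=\infty$, I will apply the finite identity to the truncations $\mathsf{F}_M$ and let $M\to\infty$. The entries of $\mathsf{F}_M\mathsf{F}_M^\ast$ converge by the boundedness hypothesis $\sum_j(|F_j|^2+|G_j|^2)<\infty$, so the determinant converges. The right-hand side increases monotonically, being a sum of nonnegative terms, so it converges to the same limit.
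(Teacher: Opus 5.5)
Your proposal is correct and is essentially the paper's proof: Cauchy--Binet reduces $\det\mathcal{M}$ to the sum of squared moduli over all $\binom{2N}{2}$ column-pair minors, and your same-generator/cross-generator enumeration is just a refinement of the paper's three cases (odd--odd, even--even, and the $N^2$ mixed odd--even pairs, which you correctly distribute over the ordered pairs $(r,j)$ and $(j,r)$). Your one addition is the truncation argument for $N=\infty$, which the paper leaves implicit; it is sound provided $\sum_j(|F_j|^2+|G_j|^2)$ converges at both $z$ and $\bar z$, so that $\mathcal{M}(z)$ is defined.
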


\begin{proof}
Throughout the proof we suppress the variable $z$. The Gram matrix is
\[
\mathcal{M} =  \mathsf{F}\mathsf{F}^* =
\begin{pmatrix}
\displaystyle\sum_{j=1}^{N}\bigl(|F_j|^2 + |G_j|^2\bigr) & \displaystyle\sum_{j=1}^{N}\bigl(F_j\,\overline{\hat G_j} - G_j\,\overline{\hat F_j}\bigr)
 \\[0.6em]
\displaystyle\sum_{j=1}^{N}\bigl(\overline{F_j}\,\hat G_j - \overline{G_j}\,\hat F_j\bigr) &
\displaystyle\sum_{j=1}^{N}\bigl(|\hat F_j|^2 + |\hat G_j|^2\bigr)
\end{pmatrix},
\]
which matches the $2 \times 2$ matrix whose determinant we wish to compute (after using $|\hat F_j(z)|^2 = |F_j(\overline z)|^2$ and $\overline{\hat G_j(z)} = G_j(\overline z)$).

\bigskip

The key tool that will be used is the Cauchy--Binet formula, which we recall.  For any $m \times n$ matrix $A$ with $m \le n$,
\begin{equation}\label{eq:CB}
\det(A A^*) = \sum_{\substack{S \subseteq \{1,\ldots,n\} \\ |S| = m}} \bigl|\det A_S\bigr|^2,
\end{equation}
where $A_S$ is the $m \times m$ submatrix obtained by selecting the rows of $A^*$ indexed by $S$, equivalently, the columns of $A$ indexed by $S$ when $A$ is read row-wise. Applied to $\mathsf{F}$ (which is $2 \times 2N$), with $m = 2$, this gives
\begin{equation}\label{eq:CB-applied}
\det\mathcal{M} = \det( \mathsf{F} \mathsf{F}^*) = \sum_{1 \le i < k \le 2N} \bigl|\det F_{(i,k)}\bigr|^2,
\end{equation}
where $ F_{(i,k)}$ denotes the $2 \times 2$ submatrix formed by columns $i$ and $k$ of $F$.

Observe that the total number of such column pairs is $\binom{2N}{2} = 2N^2 - N$.

\bigskip

To facilitate the computation we split the corresponding $2 \times 2$ minors into certain collections. Index the columns of $\mathsf{F}$ as
\begin{align*}
\text{(odd)} \quad 1,\ldots,N: & \ \text{columns of the form } (F_j, \hat G_j)^{\top},\\
\text{(even)} \quad 1,\ldots,N:&  \ \text{columns of the form } (-G_{j}, \hat F_{j})^{\top}.
\end{align*}
A column pair $(i,k)$ falls into one of three cases.

\smallskip

\textit{Case 1: Both columns from the odd collection.} For $1 \le r < j \le N$,
\[
\det \begin{pmatrix} F_r & F_j  \\ \hat G_r & \hat G_j \end{pmatrix}
= F_r \hat G_j - F_j \hat G_r.
\]
Up to sign this is $F_j \hat G_r - F_r \hat G_j$, and the squared modulus is invariant. The contribution from these $\binom{N}{2}$ minors is
\[
\sum_{1 \le r < j \le N} |F_j \hat G_r - F_r \hat G_j|^2.
\]

\smallskip

\textit{Case 2: Both columns from the even collection.} For $1 \le r < j \le N$,
\[
\det \begin{pmatrix} -G_r & -G_j \\ \hat F_r  & \hat F_j \end{pmatrix}
= -G_r \hat F_j + G_j \hat F_r = G_j \hat F_r - G_r \hat F_j.
\]
The contribution from these $\binom{N}{2}$ minors is
\[
\sum_{1 \le r < j \le N} |G_j \hat F_r - G_r \hat F_j|^2.
\]

\smallskip

\textit{Case 3: One column from each collection.} For $r \in \{1,\ldots,N\}$ (odd) and $j \in \{1,\ldots,N\}$ (even),
\[
\det \begin{pmatrix} F_r & -G_j \\ \hat G_r & \hat F_j \end{pmatrix}
= F_r \hat F_j + G_j \hat G_r.
\]
This gives $N^2$ minors (one per ordered pair $(r,j) \in \{1,\ldots,N\}^2$). Their contribution is
\[
\sum_{r=1}^{N}\sum_{j=1}^{N} |F_r \hat F_j + G_j \hat G_r|^2.
\]

\bigskip

This is assembled by substituting the three cases into \eqref{eq:CB-applied},
\[
\det\mathcal{M}
= \sum_{r,j=1}^{N} |F_r \hat F_j + G_j \hat G_r|^2
+ \sum_{1\leq r<j\leq N} |F_j \hat G_r - F_r \hat G_j|^2
+ \sum_{1\leq r<j\leq N} |G_j \hat F_r - G_r \hat F_j|^2,
\]
which agrees with \eqref{eq:detN-CB} after the trivial relabeling $(r,j) \leftrightarrow (j,r)$ in the first sum (which runs over all ordered pairs, so the sum is unchanged).  The minor counts are $\binom{N}{2} + \binom{N}{2} + N^2 = 2N^2 - N = \binom{2N}{2}$, as required.
\end{proof}

\begin{rem}\label{rmk:CB-PSD}
Since $\mathcal{M} = \mathsf{F} \mathsf{F}^*$, the matrix $\mathcal{M}$ is positive semidefinite, and \eqref{eq:detN-CB} exhibits this nonnegativity manifestly as a sum of $\binom{2N}{2}$ squared moduli. 
\end{rem}

\subsection{\texorpdfstring{The Clifford Case: $n=3$}{The Clifford Case: n=3}}

In this section we compute $\det( \mathsf{F}\,\mathsf{F}^{\ast})$ for the block matrix
\[
\mathsf{F}(z)=\bigl(M_{f_1}(z)\ \cdots\ M_{f_N}(z)\bigr)\in H^\infty(\mathbb{D})^{4\times 4N}
\]
introduced in \eqref{eq:blockF_clifford}. Just as in the $2 \times 2N$ complex case, Theorem \ref{thm:detN-CB}, the result follows from a direct application of the Cauchy--Binet formula combined with a Laplace expansion of each $4\times 4$ minor along its block-column structure.

\subsubsection{Notation: $2\times 2$ minors of $M_f$}

For each $j \in \{1, \ldots, N\}$, each pair of row indices $R \subseteq \{1,2,3,4\}$ with $|R|=2$, and each pair of column indices $C \subseteq \{1,2,3,4\}$ with $|C|=2$, denote
\begin{equation}\label{eq:Mf-minor-22}
m_R^{C}(f_j) := \det\bigl(M_{f_j}\bigr)_{R, C},
\end{equation}
the $2\times 2$ minor of $M_{f_j}$ with rows indexed by $R$ and columns indexed by $C$. There are $\binom{4}{2}^2 = 36$ such minors for each $j$.

\begin{lemma}\label{lem:22-minors-of-Mf}
Each $m_R^{C}(f_j)$ is a polynomial of degree two in the eight scalar components $\{F_k^{(j)}, \hat F_k^{(j)} : k \in \{1,2,3,23\}\}$, with at most two monomial terms. The complete table of $2\times 2$ minors of $M_{f_j}$ is provided in \ref{app:M-minors}.
\end{lemma}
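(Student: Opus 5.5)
Every entry of $M_{f_j}$ in \eqref{eq:Mf_clifford_general} is, up to a sign $\pm1$, one of the eight scalar functions $F_k^{(j)}$ or $\hat F_k^{(j)}$, $k\in\{1,2,3,23\}$. So the plan is to expand each $2\times 2$ minor directly and then check, entry by entry, that there is enough cancellation in the $36$ cases. For $R=\{r_1<r_2\}$ and $C=\{c_1<c_2\}$ we have
\[
m_R^C(f_j)=(M_{f_j})_{r_1c_1}(M_{f_j})_{r_2c_2}-(M_{f_j})_{r_1c_2}(M_{f_j})_{r_2c_1}.
\]
Each factor is a signed scalar component, so $m_R^C(f_j)$ is a sum of at most two monomials, each a product of two components. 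This gives homogeneity of degree two immediately, and it also gives the bound of at most two monomial terms.

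To organize the table I will use the row-parity structure from \eqref{eq:Mf-matrix-section}. Row $A$ uses unhatted entries when $|A|$ is even; these are rows $1$ and $4$ (indices $\emptyset$ and $\{2,3\}$). It uses hatted entries when $|A|$ is odd; these are rows $2$ and $3$ (indices $\{2\}$ and $\{3\}$). The row pairs then fall into three types:
\begin{itemize}
\item $\{1,4\}$ gives minors in unhatted components only;
\item $\{2,3\}$ gives minors in hatted components only;
\item the four mixed pairs give minors of the form $F\hat F$.
\end{itemize}
Within each row pair, the entry in column $B$ is $\pm F_{A\triangle B}$ or $\pm\hat F_{A\triangle B}$. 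So the two monomials in $m_R^C$ are
\[
F_{r_1\triangle c_1}F_{r_2\triangle c_2}\quad\text{and}\quad F_{r_1\triangle c_2}F_{r_2\triangle c_1},
\]
with the appropriate hats, and the signs come from the Clifford sign $\sigma$ via Lemma \ref{lem:clifford}.

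The one point that needs care is the claim ``at most'' two terms, rather than exactly two. Some minors may collapse to a single monomial, or to a square such as $F_1^2+F_{23}^2$. The latter happens when $r_1\triangle c_1=r_2\triangle c_2$, which occurs exactly when $R\triangle C$ structure pairs the indices so that both products involve the same component. In that case the two monomials may combine or coincide, but this never increases the count beyond two. So the bound holds uniformly, and all that remains is to fill in the $36$ explicit entries for the appendix table.

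The main obstacle is purely bookkeeping: getting the signs right in the table. I would cross-check them using the symmetry \eqref{eq:symmetry_Mf}, namely $\widehat{M_f}=J_4^{(k)}M_f(J_4^{(k)})^{-1}$. This identity forces relations between minors. For example, the minor on rows $\{2,3\}$ should be the hat of a minor on rows $\{1,4\}$, up to sign and a column permutation. Using these relations roughly halves the independent computations.
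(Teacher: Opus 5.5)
Your proposal is correct and takes the same route as the paper: the paper gives no separate argument beyond expanding each $2\times 2$ determinant of \eqref{eq:Mf_clifford_general} directly and listing the results in Table~\ref{tab:Mf-22-minors}, and your observation that every entry is a single signed component is exactly why each minor has at most two degree-two monomials. One correction to your side remark: no minor ever collapses, because $r_1\ne r_2$ and $c_1\ne c_2$ force the monomials $F_{r_1\triangle c_1}F_{r_2\triangle c_2}$ and $F_{r_1\triangle c_2}F_{r_2\triangle c_1}$ (with their hats) to be distinct, so every minor has exactly two terms (e.g.\ $F_1^2+F_{23}^2$) and is therefore a nonzero quadratic, as the table confirms.
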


\begin{thm}\label{thm:clifford-CB}
With $\mathsf{F}$ as above,
\begin{equation}\label{eq:clifford-CB}
\det\bigl(\mathsf{F}(z)\,\mathsf{F}^*(z)\bigr)
\;=\;
\sum_{\substack{S \subseteq \{1,\ldots,4N\} \\ |S|=4}}\bigl|\det F_S(z)\bigr|^2,
\end{equation}
where, for each $4$-element column subset $S$, the corresponding $4\times 4$ minor admits the Laplace block-expansion
\begin{equation}\label{eq:laplace-minor}
\det F_S \;=\; \sum_{\substack{R_1, \ldots, R_N \subseteq \{1,2,3,4\} \\ |R_j|=|C_j|,\ \bigsqcup R_j = \{1,2,3,4\}}}\!\!\mathrm{sgn}(R_1, \ldots, R_N)\, \prod_{j=1}^{N} m_{R_j}^{C_j}(f_j),
\end{equation}
in which $C_j := \{k : k\text{-th column of }M_{f_j}\text{ lies in }S\}$ is the column-pattern of $S$ within block $j$ (so $|C_1| + \cdots + |C_N| = 4$), and $\mathrm{sgn}(R_1,\ldots,R_N) \in \{\pm 1\}$ is the sign of the permutation $(R_1, R_2, \ldots, R_N) \mapsto (1, 2, 3, 4)$.
\end{thm}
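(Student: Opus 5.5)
The plan has two independent parts: the Gram-determinant identity \eqref{eq:clifford-CB}, and the block Laplace expansion \eqref{eq:laplace-minor} of each $4\times4$ minor.

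\textbf{Gram determinant.} This is the Cauchy--Binet formula \eqref{eq:CB} exactly as used in Theorem~\ref{thm:detN-CB}, now with $m=4$ and $\mathsf{F}$ of size $4\times 4N$. Writing $\mathsf{F}^*=\overline{\mathsf{F}}^{\top}$, Cauchy--Binet gives
\[
\det(\mathsf{F}\mathsf{F}^*)=\sum_{|S|=4}\det(\mathsf{F}_S)\det\bigl((\mathsf{F}^*)_{S}\bigr),
\]
where $(\mathsf{F}^*)_S$ denotes the rows of $\mathsf{F}^*$ indexed by $S$. Since $(\mathsf{F}^*)_S=(\mathsf{F}_S)^*$, we have $\det\bigl((\mathsf{F}^*)_S\bigr)=\overline{\det \mathsf{F}_S}$. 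This yields \eqref{eq:clifford-CB} pointwise in $z$, with $\binom{4N}{4}$ terms.

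\textbf{Block Laplace expansion.} Fix $S$ and split it as $S=\bigsqcup_j S_j$, where $S_j$ is the part of $S$ lying in the columns of block $j$, identified with $C_j\subseteq\{1,2,3,4\}$. I would apply the generalized Laplace expansion of a determinant along a partition of its columns. The cleanest route is induction on the number of blocks with $C_j\neq\emptyset$, using at each step the classical Laplace formula
\[
\det X=\sum_{|R|=k}(-1)^{\sum R+\sum C}\det X_{R,C}\,\det X_{R^c,C^c}
\]
for a fixed column set $C$ of size $k$.

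\begin{itemize}
\item Take $C$ to be the columns of the first nonempty block and expand.
\item Recurse on the complementary minor, whose columns are the remaining blocks.
\item This produces a sum over ordered partitions $(R_1,\dots,R_N)$ of $\{1,2,3,4\}$ with $|R_j|=|C_j|$. Blocks with $C_j=\emptyset$ contribute $R_j=\emptyset$ and the empty minor $1$.
\item Since the columns of $\mathsf{F}_S$ occur in block order, the column-index contributions $\sum C$ collapse.
\end{itemize}

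\textbf{Main obstacle: the sign.} The delicate step is checking that the accumulated sign $\prod(-1)^{\sum R+\sum C}$ equals $\mathrm{sgn}(R_1,\dots,R_N)$, the sign of the permutation listing $R_1$, then $R_2$, and so on (each in increasing order) back to $(1,2,3,4)$. I would verify this by the standard inversion count. Moving the rows $R_1$ to the top costs $\sum_{r\in R_1}r-\binom{|R_1|+1}{2}$ transpositions, and the columns $C_1$ are already leftmost, so they cost nothing. Iterating this, the parity matches the number of inversions of the concatenated word $R_1R_2\cdots R_N$.

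\textbf{Lemma~\ref{lem:22-minors-of-Mf} and remaining checks.} The minors $m^{C_j}_{R_j}(f_j)$ are genuinely minors of $M_{f_j}$, because block $j$ of $\mathsf{F}$ is $M_{f_j}$. The $2\times2$ minors of Lemma~\ref{lem:22-minors-of-Mf} are needed only to make the products explicit; that lemma asserts each has at most two monomials. For it I would read off from \eqref{eq:Mf_clifford_general} that each $2\times 2$ minor is a difference of two products of entries, and each entry is a single signed component $\pm F_\alpha$ or $\pm\hat F_\alpha$. Blocks with $|C_j|\in\{1,3,4\}$ need no separate treatment, since \eqref{eq:laplace-minor} is stated with general minors $m_{R_j}^{C_j}$.
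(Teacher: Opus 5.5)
Your proposal is correct and follows the paper's proof. Both apply Cauchy--Binet to the $4\times 4N$ matrix $\mathsf{F}$ and then expand each $\det F_S$ by the generalized Laplace formula along its block-column partition. The only addition is that you derive the block Laplace formula and its sign $\mathrm{sgn}(R_1,\ldots,R_N)$ by induction from the two-block Laplace formula with an inversion count, whereas the paper simply invokes the generalized expansion.
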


\begin{proof}
The Cauchy--Binet formula applied to the $4 \times 4N$ matrix $\mathsf{F}$ gives \eqref{eq:clifford-CB} immediately, with the sum running over the $\binom{4N}{4}$ four-element column subsets of $\mathsf{F}$.

For \eqref{eq:laplace-minor}, fix a column subset $S$ and write $|C_j| = k_j$ for the number of selected columns coming from block $j$, so $\displaystyle\sum_j k_j = 4$ and the columns of $ F_S$ split into blocks of widths $k_1, \ldots, k_N$. Expanding $\det F_S$ by the generalized Laplace formula along this block-column partition,
\[
\det F_S = \sum_{R_1 \sqcup \cdots \sqcup R_N = \{1,2,3,4\},\; |R_j|=k_j} \mathrm{sgn}(R_1,\ldots, R_N)\, \prod_{j=1}^N \det\bigl(M_{f_j}\bigr)_{R_j, C_j},
\]
which is \eqref{eq:laplace-minor}.
\end{proof}

\begin{rem}
The classification of $S$ by its block-column pattern $(k_1, \ldots, k_N)$ with $\sum_j k_j = 4$ organizes the sum \eqref{eq:clifford-CB} into five types according to how $4$ is partitioned into nonnegative integers $\le 4$, namely
\[
4 = 4,\quad 3+1,\quad 2+2,\quad 2+1+1,\quad 1+1+1+1.
\]
The counts for each type are summarized in Table \ref{tab:clifford-types}.
\end{rem}

\begin{table}[htbp]
\centering
\begin{tabular}{c|c|c}
Partition of 4 & Number of patterns $(k_1,\ldots,k_N)$ & Minors per pattern\\
\hline
$4$ & $N$ & $1$\\
$3+1$ & $ N(N-1)$ & $\binom{4}{3}\binom{4}{1} = 16$\\
$2+2$ & $\binom{N}{2}$ & $\binom{4}{2}^2 = 36$\\
$2+1+1$ & $\binom{N}{1}\binom{N-1}{2}$ & $\binom{4}{2}\binom{4}{1}^2 = 96$\\
$1+1+1+1$ & $\binom{N}{4}$ & $\binom{4}{1}^4 = 256$
\end{tabular}
\caption{Classification of the $\binom{4N}{4}$ minors in \eqref{eq:clifford-CB} by block-column pattern.}
\label{tab:clifford-types}
\end{table}

\subsubsection{The $2\times 2$ minors of $M_f$}
\label{app:M-minors}

Recall the structured $4\times 4$ matrix
\[
M_f =
\begin{pmatrix}
F_1 & -F_2 & -F_3 & -F_{23} \\
\hat F_2 & \hat F_1 & -\hat F_{23} & \hat F_3 \\
\hat F_3 & \hat F_{23} & \hat F_1 & -\hat F_2 \\
F_{23} & -F_3 & F_2 & F_1
\end{pmatrix},
\]
where for brevity we have dropped the function index $j$ and the variable $z$. For each pair of row indices $R = \{r_1, r_2\}$ and column indices $C = \{c_1, c_2\}$ with $r_1 < r_2$ and $c_1 < c_2$, we denote
\[
m_R^C(f) := \det\bigl(M_f\bigr)_{R, C},
\]
the corresponding $2\times 2$ minor of $M_f$. There are $\binom{4}{2}^2 = 36$ such minors, each a polynomial of degree two in the eight Clifford components $F_1, F_2, F_3, F_{23}, \hat F_1, \hat F_2, \hat F_3, \hat F_{23}$.

Table \ref{tab:Mf-22-minors} below lists all 36 minors, organized by row pair.

\begin{table}[h]
\centering
\begin{tabular}{c|c|l}
$R$ & $C$ & $m_R^C(f)$ \\
\hline\hline
\multirow{6}{*}{$(1,2)$}
& $(1,2)$ & $F_1 \hat F_1 + F_2 \hat F_2$ \\
& $(1,3)$ & $-F_1 \hat F_{23} + F_3 \hat F_2$ \\
& $(1,4)$ & $F_1 \hat F_3 + F_{23} \hat F_2$ \\
& $(2,3)$ & $F_2 \hat F_{23} + F_3 \hat F_1$ \\
& $(2,4)$ & $-F_2 \hat F_3 + F_{23} \hat F_1$ \\
& $(3,4)$ & $-F_3 \hat F_3 - F_{23} \hat F_{23}$ \\
\hline
\multirow{6}{*}{$(1,3)$}
& $(1,2)$ & $F_1 \hat F_{23} + F_2 \hat F_3$ \\
& $(1,3)$ & $F_1 \hat F_1 + F_3 \hat F_3$ \\
& $(1,4)$ & $-F_1 \hat F_2 + F_{23} \hat F_3$ \\
& $(2,3)$ & $-F_2 \hat F_1 + F_3 \hat F_{23}$ \\
& $(2,4)$ & $F_2 \hat F_2 + F_{23} \hat F_{23}$ \\
& $(3,4)$ & $F_3 \hat F_2 + F_{23} \hat F_1$ \\
\hline
\multirow{6}{*}{$(1,4)$}
& $(1,2)$ & $-F_1 F_3 + F_2 F_{23}$ \\
& $(1,3)$ & $F_1 F_2 + F_3 F_{23}$ \\
& $(1,4)$ & $F_1^2 + F_{23}^2$ \\
& $(2,3)$ & $-F_2^2 - F_3^2$ \\
& $(2,4)$ & $-F_1 F_2 - F_3 F_{23}$ \\
& $(3,4)$ & $-F_1 F_3 + F_2 F_{23}$ \\
\hline
\multirow{6}{*}{$(2,3)$}
& $(1,2)$ & $-\hat F_1 \hat F_3 + \hat F_2 \hat F_{23}$ \\
& $(1,3)$ & $\hat F_1 \hat F_2 + \hat F_3 \hat F_{23}$ \\
& $(1,4)$ & $-\hat F_2^2 - \hat F_3^2$ \\
& $(2,3)$ & $\hat F_1^2 + \hat F_{23}^2$ \\
& $(2,4)$ & $-\hat F_1 \hat F_2 - \hat F_3 \hat F_{23}$ \\
& $(3,4)$ & $-\hat F_1 \hat F_3 + \hat F_2 \hat F_{23}$ \\
\hline
\multirow{6}{*}{$(2,4)$}
& $(1,2)$ & $-F_3 \hat F_2 - F_{23} \hat F_1$ \\
& $(1,3)$ & $F_2 \hat F_2 + F_{23} \hat F_{23}$ \\
& $(1,4)$ & $F_1 \hat F_2 - F_{23} \hat F_3$ \\
& $(2,3)$ & $F_2 \hat F_1 - F_3 \hat F_{23}$ \\
& $(2,4)$ & $F_1 \hat F_1 + F_3 \hat F_3$ \\
& $(3,4)$ & $-F_1 \hat F_{23} - F_2 \hat F_3$ \\
\hline
\multirow{6}{*}{$(3,4)$}
& $(1,2)$ & $-F_3 \hat F_3 - F_{23} \hat F_{23}$ \\
& $(1,3)$ & $F_2 \hat F_3 - F_{23} \hat F_1$ \\
& $(1,4)$ & $F_1 \hat F_3 + F_{23} \hat F_2$ \\
& $(2,3)$ & $F_2 \hat F_{23} + F_3 \hat F_1$ \\
& $(2,4)$ & $F_1 \hat F_{23} - F_3 \hat F_2$ \\
& $(3,4)$ & $F_1 \hat F_1 + F_2 \hat F_2$ \\
\end{tabular}

\caption{The 36 $2\times 2$ minors of $M_f$. Each entry is the determinant of the $2\times 2$ submatrix with the specified rows and columns.}
\label{tab:Mf-22-minors}
\end{table}

\begin{rem}
The minors $m_R^C(f)$ exhibit several structural features:
\begin{enumerate}
\item \emph{Pure $\mathsf{F}$-blocks:} Minors with row pair $R = (1,4)$ involve only the unhatted components $F_1, F_2, F_3, F_{23}$. These come from rows $1, 4$ of $M_f$, which contain only unhatted entries.

\item \emph{Pure $\hat{\mathsf{F}}$-blocks:} Minors with row pair $R = (2,3)$ involve only the hatted components $\hat F_1, \hat F_2, \hat F_3, \hat F_{23}$, from rows $2, 3$.

\item \emph{Mixed blocks:} Minors with row pairs $R \in \{(1,2), (1,3), (2,4), (3,4)\}$ mix both unhatted and hatted components. These are the bilinear $F \cdot \hat F$ pairings that produce the slice-norm structure $|f|^2 = \displaystyle\sum_k F_k \hat F_k$ along the diagonal of $M_f M_f^*$.

\item \emph{Coincidences:} Certain pairs of minors coincide or differ only by sign --- for example, $m_{(1,2)}^{(1,2)} = m_{(3,4)}^{(3,4)} = F_1\hat F_1 + F_2\hat F_2$, and $m_{(1,4)}^{(1,2)} = m_{(1,4)}^{(3,4)} = -F_1 F_3 + F_2 F_{23}$. These coincidences reflect the Clifford symmetries \eqref{eq:symmetry_Mf}.
\end{enumerate}
\end{rem}

\subsection{\texorpdfstring{The Clifford Case: General $n$}{The Clifford Case: General n}}

We now state and prove the general $\mathbb{R}_n$ analogue of the determinant identities of the previous subsections. The result follows from the Cauchy--Binet formula applied to the block matrix $\mathsf{F}$, combined with a generalized Laplace expansion of each minor along the block-column structure.

\subsubsection{Setup}

Let $n \ge 2$ and let $f_{1}, \ldots, f_{N}$ be slice monogenic functions $\mathbb{R}_n$-valued. For each $j \in \{1,\ldots,N\}$, write their restrictions to $\mathbb{C}_I$ as
\[
f_{I,j}(z) = \sum_{A \in \mathcal I} F_A^{(j)}(z)\, I_A,
\]
and let $M_{f_j}$ denote the associated $2^{n-1} \times 2^{n-1}$ matrix defined by \eqref{eq:Mf-matrix-section}. Form the block matrix
\begin{equation}\label{eq:blockF-general}
\mathsf{F}(z) := \bigl(M_{f_1}(z)\ \cdots\ M_{f_N}(z)\bigr) \in H^\infty(\mathbb{D})^{2^{n-1}\times N \cdot 2^{n-1}}.
\end{equation}

For each $j \in \{1,\ldots,N\}$ and each pair of subsets $R, C \subseteq \mathcal I$ with $|R| = |C| = r$, denote by
\begin{equation}\label{eq:Mf-minor-RC}
m_R^C(f_j) := \det\bigl[(M_{f_j})_{A,B}\bigr]_{A \in R,\, B \in C}
\end{equation}
the $r\times r$ minor of $M_{f_j}$ with rows indexed by $R$ and columns indexed by $C$. By the formula \eqref{eq:Mf-matrix-section}, $m_R^C(f_j)$ is a polynomial of degree $r$ in the components $\{F_A^{(j)}, \hat F_A^{(j)} : A \in \mathcal I\}$ with explicit signs determined by $\sigma$.

\subsubsection{Statement of the Master Formula}

\begin{thm}[Master determinant formula]\label{thm:master-formula}
Fix any total order on $\mathcal I$, with respect to which signs of permutations are computed. Then
\begin{equation}\label{eq:master}
\det\bigl(\mathsf{F}(z)\,\mathsf{F}^{\ast}(z)\bigr)
\;=\; \sum_{S} \,\Biggl|\,\sum_{(R_1,\ldots,R_N)} \mathrm{sgn}(R_1, \ldots, R_N) \prod_{j=1}^{N} m_{R_j}^{C_j}(f_j)\,\Biggr|^2,
\end{equation}
where:
\begin{itemize}
\item the outer sum runs over column subsets $S \subseteq \{1, \ldots, N \cdot 2^{n-1}\}$ with $|S| = 2^{n-1}$, decomposed as a disjoint union $S = C_1 \sqcup \cdots \sqcup C_N$ in which $C_j \subseteq \mathcal I$ records the columns of $S$ lying in block $j$ (so $\sum_j |C_j| = 2^{n-1}$);
\item the inner sum runs over ordered row-partitions $(R_1, \ldots, R_N)$ of $\mathcal I$ satisfying $|R_j| = |C_j|$ for each $j$;
\item $\mathrm{sgn}(R_1, \ldots, R_N) \in \{\pm 1\}$ is the sign of the permutation that takes the concatenation $(R_1, R_2, \ldots, R_N)$ (with each $R_j$ listed in increasing order) to the natural order of $\mathcal I$.
\end{itemize}
The total number of squared-modulus terms in \eqref{eq:master} is $\displaystyle\binom{N \cdot 2^{n-1}}{2^{n-1}}$, and the matrix $ \mathsf{F}\, \mathsf{F}^*$ is positive semidefinite.
\end{thm}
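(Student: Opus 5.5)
The plan is to follow the same two-step pattern as Theorems \ref{thm:detN-CB} and \ref{thm:clifford-CB}: apply Cauchy--Binet to the wide matrix $\mathsf{F}$, then evaluate each maximal minor by a generalized Laplace expansion along the block-column structure. Set $m:=2^{n-1}$. Since $\mathsf{F}$ is $m\times Nm$ with $m\le Nm$, the Cauchy--Binet formula \eqref{eq:CB} gives
\[
\det(\mathsf{F}\mathsf{F}^\ast)=\sum_{|S|=m}\bigl|\det \mathsf{F}_S\bigr|^2,
\]
where $S$ ranges over the $m$-element subsets of the column set of $\mathsf{F}$. This already yields the count $\binom{Nm}{m}$ of squared-modulus terms. (For $N=\infty$ one reads the identity as an absolutely convergent series, obtained by truncating to finitely many blocks and passing to the limit, since the entries are square-summable by the Carleson condition.)

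Next, fix $S$ and order its columns block by block. Writing $C_j$ for the columns of $S$ in block $j$, identified with elements of $\mathcal I$ through the fixed total order, the submatrix is
\[
\mathsf{F}_S=\bigl((M_{f_1})_{\mathcal I,C_1}\ \cdots\ (M_{f_N})_{\mathcal I,C_N}\bigr).
\]
Because the columns of $\mathsf{F}$ are indexed in increasing order and the blocks appear consecutively, no column reordering is needed, so no extra sign arises. The generalized Laplace expansion along this column partition then expresses $\det\mathsf{F}_S$ as a sum over ordered row partitions $(R_1,\ldots,R_N)$ of $\mathcal I$ with $|R_j|=|C_j|$. Each term is $\mathrm{sgn}(R_1,\ldots,R_N)\prod_j\det(M_{f_j})_{R_j,C_j}=\mathrm{sgn}\prod_j m_{R_j}^{C_j}(f_j)$. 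This is proved by iterating the single-block Laplace expansion: expand along the first $|C_1|$ columns, then apply induction on $N$ to the complementary minor. The signs combine into the sign of the permutation sending the concatenated word $(R_1,\ldots,R_N)$ to the natural order. Blocks with $C_j=\emptyset$ contribute the empty minor $1$ and force $R_j=\emptyset$. Substituting this expansion into the Cauchy--Binet sum gives \eqref{eq:master}.

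The main obstacle is the sign bookkeeping in the iterated Laplace expansion. The single-step formula carries the factor $(-1)^{\sum R_1+\sum C_1}$ with respect to the positions of the rows and columns. One has to check that the column contributions vanish because the $C_j$ occupy consecutive leftmost positions, and that the row contributions multiply to the inversion count of the concatenation $(R_1,\ldots,R_N)$. I would verify this by showing that the inversions of $(R_1,R_2\cdots R_N)$ equal the inversions between $R_1$ and the rest plus those within $(R_2,\ldots,R_N)$, and that the first count matches $\sum_{r\in R_1}\mathrm{pos}(r)-\binom{|R_1|+1}{2}$.

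Finally, positive semidefiniteness is immediate: for any $v\in\mathbb C^{m}$ one has $v^\ast\mathsf{F}\mathsf{F}^\ast v=\|\mathsf{F}^\ast v\|^2\ge0$. The formula \eqref{eq:master} also exhibits $\det(\mathsf{F}\mathsf{F}^\ast)\ge0$ directly. Applying the same Cauchy--Binet argument to the row-restricted submatrices $\mathsf{F}_{R,\cdot}$ shows that every principal minor is a sum of squares, which gives the Sylvester-criterion statement as well.
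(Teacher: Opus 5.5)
Your proposal is correct and follows the paper's proof: you apply Cauchy--Binet to the $2^{n-1}\times N\cdot 2^{n-1}$ matrix $\mathsf{F}$, then use the generalized Laplace expansion of each maximal minor along the block-column partition $S=C_1\sqcup\cdots\sqcup C_N$. You also supply details the paper leaves implicit, all of which are sound: the inductive sign bookkeeping, the convention that $C_j=\emptyset$ forces $R_j=\emptyset$, the direct argument $v^\ast\mathsf{F}\mathsf{F}^\ast v=\|\mathsf{F}^\ast v\|^2\ge 0$ for positive semidefiniteness, and the truncation remark for $N=\infty$.
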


\begin{proof}
We apply Cauchy--Binet to $\mathsf{F}$. Since $\mathsf{F}$ is $2^{n-1} \times N\cdot 2^{n-1}$ with row dimension at most column dimension, the Cauchy--Binet formula gives
\begin{equation}\label{eq:CB-master}
\det(\mathsf{F}\,\mathsf{F}^{\ast}) = \sum_{\substack{S \subseteq \{1,\ldots,N\cdot 2^{n-1}\}\\ |S| = 2^{n-1}}} |\det F_S|^2,
\end{equation}
where $F_S$ is the $2^{n-1}\times 2^{n-1}$ submatrix of $\mathsf{F}$ consisting of the columns indexed by $S$. The number of terms in this sum is $\binom{N\cdot 2^{n-1}}{2^{n-1}}$.

Fix one such $S$ and write $S = C_1 \sqcup \cdots \sqcup C_N$ as in the theorem. The matrix $F_S$ is a $2^{n-1}\times 2^{n-1}$ matrix whose columns split into $N$ blocks of widths $|C_1|, \ldots, |C_N|$, with the $j$-th block consisting of the columns $C_j$ of $M_{f_j}$. The \emph{generalized Laplace expansion} of $\det F_S$ along this block-column partition states
\begin{equation}\label{eq:laplace-general}
\det F_S = \sum_{\substack{(R_1, \ldots, R_N)\\ |R_j| = |C_j|,\ \bigsqcup_j R_j = \mathcal I}} \mathrm{sgn}(R_1, \ldots, R_N) \prod_{j=1}^{N} \det\bigl[(M_{f_j})_{A,B}\bigr]_{A \in R_j,\, B \in C_j}.
\end{equation}
The product on the right is precisely $\prod_j m_{R_j}^{C_j}(f_j)$ in the notation \eqref{eq:Mf-minor-RC}. Substituting \eqref{eq:laplace-general} into \eqref{eq:CB-master} and taking absolute values yields \eqref{eq:master}.
\end{proof}

\subsubsection{Comments on the structure}

The Master Formula \eqref{eq:master} is the natural unification of the special cases established in the previous subsections:
\begin{itemize}
\item For $n=2$ (the quaternionic case), $|\mathcal I| = 2$, each $F_S$ is a $2\times 2$ matrix, and the inner Laplace sum has at most $\binom{2}{|C_1|}$ terms per row-partition, recovering Theorem \ref{thm:detN-CB}.
\item For $n=3$ (the Clifford $\mathbb{R}_3$ case), $|\mathcal I| = 4$, each $F_S$ is $4\times 4$, and the inner Laplace sum is the one displayed in Theorem \ref{thm:clifford-CB}.
\end{itemize}

Two practical features of the formula are worth noting. First, the $r \times r$ minors $m_R^C(f_j)$ appearing in the inner products are determined entirely by the matrix entries \eqref{eq:Mf-matrix-section}; in particular, each $m_R^C(f_j)$ is a homogeneous polynomial of degree $r$ in the variables $\{F_A^{(j)}, \hat F_A^{(j)} : A \in \mathcal I\}$. Second, although the number of squared-modulus terms grows as $\binom{N \cdot 2^{n-1}}{2^{n-1}}$, the individual terms remain structurally explicit: each is a polynomial of degree $2^{n-1}$ obtained by combining at most $N!$ row-partition contributions, each of which is itself a product of explicit minors. The Master Formula therefore expresses $\det(\mathsf{F}\,\mathsf{F}^{\ast})$ as an explicit, manifestly nonnegative polynomial in the slice components of the data, of uniform structure across all dimensions $n$.

\subsection{Principal minors and Sylvester's criterion}

The Master Formula \eqref{eq:master} computes $\det(\mathsf{F} \mathsf{F}^{\ast})$ explicitly. The same argument applies to any principal submatrix of $\mathsf{F} \mathsf{F}^*$, yielding analogous formulas. We record the result here, both for its own sake and as a determinant-based proof of positive semidefiniteness via Sylvester's criterion.

\begin{thm}[Principal minors]\label{thm:principal-minors}
Fix a total order on $\mathcal I$ and identify the rows of $\mathsf{F}$ with $\mathcal I$ accordingly. For any nonempty subset $T \subseteq \mathcal I$ with $|T| = k$, the principal submatrix $(\mathsf{F} \mathsf{F}^*)_{T,T}$ is a Gram matrix, and
\begin{equation}\label{eq:principal-minor-formula}
\det\bigl( \mathsf{F} \mathsf{F}^* \bigr)_{T,T} = \sum_{\substack{S \subseteq \{1,\ldots,N \cdot 2^{n-1}\}\\ |S| = k}} \bigl|\det F_{T, S}\bigr|^2,
\end{equation}
where $F_{T, S}$ denotes the $k\times k$ submatrix of $F$ with rows indexed by $T$ and columns indexed by $S$. Each minor $\det  F_{T, S}$ admits the Laplace block-expansion
\begin{equation}\label{eq:principal-laplace}
\det F_{T, S} = \sum_{(R_1,\ldots,R_N)} \mathrm{sgn}(R_1,\ldots,R_N) \prod_{j=1}^N m_{R_j, T}^{C_j}(f_j),
\end{equation}
where:
\begin{itemize}
\item $S = C_1 \sqcup \cdots \sqcup C_N$ with $C_j \subseteq \mathcal I$ recording the columns of $S$ in block $j$;
\item $(R_1, \ldots, R_N)$ ranges over ordered partitions of $T$ with $|R_j| = |C_j|$;
\item $m_{R, T}^{C}(f_j) := \det\bigl[(M_{f_j})_{A,B}\bigr]_{A \in R, B \in C}$ is the $|R|\times|C|$ minor of $M_{f_j}$ restricted to rows in $T$ and columns in $C$;
\item $\mathrm{sgn}(R_1, \ldots, R_N)$ is the sign of the permutation taking $(R_1, \ldots, R_N)$ to the natural order of $T$.
\end{itemize}
\end{thm}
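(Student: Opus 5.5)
The plan is to reduce everything to the row-submatrix $F_{T,\cdot}$ of $\mathsf{F}$, i.e.\ the $k\times N\cdot 2^{n-1}$ matrix obtained by keeping only the rows of $\mathsf{F}$ indexed by $T$.

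First I will show that the principal submatrix is a Gram matrix. By definition of matrix multiplication,
\[
(\mathsf{F}\mathsf{F}^*)_{A,A'}=\sum_{c}\mathsf{F}_{A,c}\overline{\mathsf{F}_{A',c}}
\]
for $A,A'\in T$. This is the inner product of rows $A$ and $A'$ of $\mathsf{F}$, so $(\mathsf{F}\mathsf{F}^*)_{T,T}=F_{T,\cdot}\,(F_{T,\cdot})^*$.

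Next I will apply Cauchy--Binet, in the form \eqref{eq:CB} already used in the proofs of Theorem~\ref{thm:detN-CB} and Theorem~\ref{thm:master-formula}. It applies to the $k\times N\cdot 2^{n-1}$ matrix $F_{T,\cdot}$ because $k\le 2^{n-1}\le N\cdot 2^{n-1}$. Its $k\times k$ column-minors are exactly $\det F_{T,S}$ with $|S|=k$, and this gives \eqref{eq:principal-minor-formula} at once.

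For the block expansion \eqref{eq:principal-laplace}, fix $S$ and split its columns by block, $S=C_1\sqcup\cdots\sqcup C_N$. The columns of $F_{T,S}$ lying in block $j$ are the columns $C_j$ of $M_{f_j}$, restricted to the rows $T$. I will then expand $\det F_{T,S}$ by the generalized Laplace expansion along this block-column partition, exactly as in \eqref{eq:laplace-general}, with the row set $\mathcal I$ replaced by $T$.

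The expansion runs over ordered partitions $(R_1,\dots,R_N)$ of $T$ with $|R_j|=|C_j|$. Each term is a product of the minors of $M_{f_j}$ with rows $R_j\subseteq T$ and columns $C_j$, which are the quantities $m^{C_j}_{R_j,T}(f_j)$. Blocks with $C_j=\emptyset$ contribute the empty minor, equal to $1$.

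The only step needing care is the sign bookkeeping. I will argue it by reducing to the ordinary Laplace expansion along a single set of columns, iterated block by block. At each stage the sign is $(-1)^{\sum(\text{row positions})+\sum(\text{column positions})}$, computed relative to the remaining rows. Since the columns of each block are already contiguous and in order, the column contributions collapse. The accumulated row signs then multiply to the sign of the permutation sending the concatenation $(R_1,\dots,R_N)$, each $R_j$ listed increasingly, to the natural order of $T$. This is the same computation as in Theorem~\ref{thm:master-formula}, with $\mathcal I$ replaced by the ordered set $T$, and I expect it to be the main, though routine, obstacle.

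Finally, each principal minor is a sum of squared moduli, so it is nonnegative for every $T$. This feeds into Sylvester's criterion (the all-principal-minors version) to certify that $\mathsf{F}\mathsf{F}^*\ge 0$.
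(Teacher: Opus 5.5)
Your proposal is correct and follows essentially the same route as the paper: identify $(\mathsf{F}\mathsf{F}^*)_{T,T}$ with $F_{T,:}F_{T,:}^*$, apply Cauchy--Binet to this $k\times N\cdot 2^{n-1}$ row-submatrix, and then expand each $\det F_{T,S}$ by the generalized Laplace expansion as in the Master Formula, with $\mathcal I$ replaced by $T$. Your sign bookkeeping (iterating single-block Laplace expansions, where the column positions collapse because the columns of each block are contiguous) and your convention that blocks with $C_j=\emptyset$ contribute $1$ supply details the paper leaves implicit.
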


\begin{proof}
The submatrix $(\mathsf{F} \mathsf{F}^* )_{T,T}$ has $(A, B)$-entry $\displaystyle \sum_{\ell} (F)_{A, \ell} \overline{(F)_{B, \ell}}$ for $A, B \in T$, which is precisely the $(A, B)$-entry of $F_{T, :} \, F_{T, :}^*$, where $F_{T, :}$ is the $k \times N \cdot 2^{n-1}$ submatrix with rows $T$. Hence
\[
\bigl( \mathsf{F} \mathsf{F}^* \bigr)_{T,T} = F_{T,:}\,F_{T,:}^*  .
\]
Applying Cauchy--Binet to this $k \times N\cdot 2^{n-1}$ matrix yields \eqref{eq:principal-minor-formula}. The Laplace block-expansion \eqref{eq:principal-laplace} of each minor $\det F_{T, S}$ proceeds exactly as in the proof of the Master Formula (Theorem \ref{thm:master-formula}), now with rows restricted to $T$.
\end{proof}

\begin{cor}[Positive semidefiniteness via Sylvester's criterion]\label{cor:psd-sylvester}
The matrix $\mathsf{F} \mathsf{F}^*$ is Hermitian and positive semidefinite.
\end{cor}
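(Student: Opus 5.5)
Hermitian symmetry is immediate: $(\mathsf{F}\mathsf{F}^*)^* = (\mathsf{F}^*)^*\mathsf{F}^* = \mathsf{F}\mathsf{F}^*$, entrywise at each fixed $z\in\mathbb{D}$. For positive semidefiniteness we follow the determinant route suggested by the corollary's title and use Theorem~\ref{thm:principal-minors}, with a direct quadratic-form argument as a cross-check.

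\emph{Step 1 (all principal minors are nonnegative).} Fix $z$ and a nonempty $T\subseteq\mathcal I$ with $|T|=k$. By Theorem~\ref{thm:principal-minors}, $\det(\mathsf{F}\mathsf{F}^*)_{T,T}=\sum_{|S|=k}|\det F_{T,S}|^2\ge 0$. The theorem applies to every subset $T$, not only to leading ones, because the row order on $\mathcal I$ is arbitrary.

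\emph{Step 2 (apply the correct form of Sylvester's criterion).} A Hermitian matrix is positive semidefinite if and only if \emph{all} of its principal minors are nonnegative. Nonnegativity of the leading minors alone is not enough: $\operatorname{diag}(0,-1)$ has leading minors $0,0$ but is not semidefinite. The step most likely to go wrong is therefore citing only the leading-minor version, so we invoke the full version. Step 1 verifies its hypothesis, and the corollary follows.

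\emph{Step 3 (independent check).} For $v\in\mathbb{C}^{2^{n-1}}$ we have $v^*\mathsf{F}\mathsf{F}^*v=\|\mathsf{F}^*v\|^2\ge 0$. If $N=\infty$, the entries of $\mathsf{F}\mathsf{F}^*$ are convergent series because $\sum_j|f_j|^2\le 1$ bounds the row norms. The finite-$k$ Cauchy--Binet step in Theorem~\ref{thm:principal-minors} then goes through by taking limits over truncations to finitely many generators, since determinants are continuous.
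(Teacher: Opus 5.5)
Your argument is correct and is the paper's: Hermitian symmetry by taking adjoints, then the all-principal-minors form of Sylvester's criterion, with each principal minor written as a sum of squared moduli via the Cauchy--Binet formula of Theorem~\ref{thm:principal-minors}. Your Step~3 ($v^*\mathsf{F}\mathsf{F}^*v=\|\mathsf{F}^*v\|^2\ge 0$) is a valid, more elementary one-line alternative that does not need the determinant machinery at all, and your truncation remark covers the case $N=\infty$, which the paper's proof leaves implicit.
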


\begin{proof}
Hermitianness is immediate from $(AB^*)^* = BA^*$ applied to $A = B = \mathsf{F}$. For positive semidefiniteness, Sylvester's criterion (in PSD form) states that a Hermitian matrix is PSD if and only if all of its principal minors are nonnegative. By Theorem \ref{thm:principal-minors}, every principal minor $\det(\mathsf{F} \mathsf{F}^*)_{T,T}$ is a sum of squared moduli, hence nonnegative.
\end{proof}

\begin{rem}
The leading principal minors correspond to $T = \{1, 2, \ldots, k\}$ in some fixed ordering of $\mathcal I$. For positive \emph{definiteness}, it suffices to check that all leading principal minors are strictly positive, which by \eqref{eq:principal-minor-formula} is equivalent to the existence of at least one nonvanishing $k\times k$ minor of $F_{[k], :}$ for each $k$.
\end{rem}

\end{document}